\documentclass[11pt]{amsart}

\usepackage[utf8]{inputenc}
\usepackage[backend=biber,style=alphabetic,maxbibnames=50]{biblatex}
\usepackage{amssymb,amsfonts}
\usepackage{amsmath,amscd}
\usepackage[all,arc]{xy}
\usepackage{enumerate}
\usepackage{mathrsfs}
\usepackage[pdftex]{graphicx}
\usepackage[T1]{fontenc}
\usepackage[usenames,dvipsnames]{color}
\usepackage[bookmarks=false]{hyperref}
\usepackage{dsfont}
\usepackage{tikz-cd}
\usetikzlibrary{automata,positioning}
\usepackage{fullpage}
\usepackage{wasysym}

\numberwithin{equation}{section}

\theoremstyle{plain}
\newtheorem{thm}{Theorem}[section]
\newtheorem{cor}[thm]{Corollary}
\newtheorem{prop}[thm]{Proposition}
\newtheorem{lem}[thm]{Lemma}

\theoremstyle{definition}
\newtheorem{defn}[thm]{Definition}

\newtheorem{aDD^+m}[thm]{ADD^+endum}

\theoremstyle{remark}
\newtheorem{rmk}[thm]{Remark}

\title{Joining rigidity for rational maps}

\author{Fabrizio Bianchi}
\address{Dipartimento di Matematica, Università di Pisa, Largo Bruno Pontecorvo 5, 56127 Pisa, Italy}
 \email{fabrizio.bianchi$@$unipi.it}
\author{Yan Mary He}
\address{Department of Mathematics\\
	University of Oklahoma\\
	Norman, OK 73019}
\email{he$@$ou.edu}
\date{\today}

\begin{document}

\begin{abstract}
We initiate a joining rigidity theory for rational maps on the Riemann
sphere $\mathbb P^1=\mathbb P^1(\mathbb C)$. Let
$f_1,f_2\colon\mathbb P^1\to\mathbb P^1$ be rational maps of degree
at least $2$,
and 
$\mu_1,\mu_2$
their respective measures of
maximal entropy, whose supports are the Julia sets $J(f_1)$ and
$J(f_2)$. We study ergodic joinings of the systems
$(J(f_1),f_1,\mu_1)$ and $(J(f_2),f_2,\mu_2)$, namely ergodic
probability measures on $J(f_1)\times J(f_2)$ which are invariant
under $f_1\times f_2$ and whose marginals are $\mu_1$ and $\mu_2$.

Our main theorem shows that a positive-mass local holomorphic relation
forces algebraic rigidity. More precisely, if the joining charges the
graph of a local biholomorphism, then that local relation globalizes to
an invariant algebraic curve and yields either a finite cycle of rational
graph or transpose-graph relations, or a genuinely multi-valued
invariant algebraic correspondence. If no local biholomorphic graph has
positive joining measure, then the joining generates a compact
non-discrete family of local holomorphic relations.

The proof introduces normalized inverse branch transfer maps and studies
their cluster limits. Starting from a local biholomorphic graph of
positive joining measure, recurrence and contraction of inverse branches
produce recurrent local intertwining relations. These are promoted to an algebraic relation by a local-to-global
rigidity argument in the non-Latt\`es case and by affine
uniformization in the Latt\`es case. In the absence of
any positive-mass local biholomorphic graph, the cluster family must be
infinite, and its non-discrete closure gives the second alternative.
\end{abstract}

\maketitle

\section{Introduction}
\subsection{Joinings in dynamical systems}
The study of joinings in dynamical systems
began with Furstenberg's foundational work \cite{Furstenberg1967}. A joining of two measure-preserving systems is an invariant measure on the product space whose coordinate marginals are the original invariant measures. While the product measure is always a joining, non-product joinings often reveal an underlying geometric or algebraic relation between the two systems. Furstenberg introduced this language to formalize the idea that two systems are dynamically unrelated when their only joining is the product joining, a property he called {\it disjointness}. Since then, joinings have become one of the central tools in measurable dynamics. They provide a way to detect common factors, rigidity and multiple recurrence phenomena; see, for example, the survey of de la Rue \cite{deLaRue2005} and Glasner's monograph \cite{Glasner2003}.
Furstenberg's joining theory forms part of a broader measure-rigidity
perspective, also exemplified by the $\times2,\times3$ conjecture \cite{Furstenberg1967}, in which
multiple dynamical invariances are expected to force algebraic structure.
A \emph{self-joining}
is a joining of a
system with itself and provides a natural measure-theoretic means of detecting
its internal symmetries. For example, a measure-preserving map commuting with
the dynamics determines a self-joining supported on its graph, while more
general self-joinings may be supported on 
invariant correspondences.

A central theme in joining theory is \emph{rigidity}. In many natural
systems, every ergodic joining is either the product joining or is forced by
an explicit algebraic or geometric relation. Ratner's joining theorem for
horocycle flows is a fundamental example. In her work on horocycle flows and
rigidity of products \cite{Ratner1983}, Ratner 
proved that non-trivial
joinings of finite-volume horocycle flows arise from algebraic
correspondences, providing a foundational model for joining rigidity in
homogeneous dynamics.
A higher-rank counterpart was developed by Einsiedler and Lindenstrauss.
They classified joinings for broad classes of higher-rank diagonalizable
actions on locally homogeneous spaces
\cite{EinsiedlerLindenstrauss2007} and later proved that joinings of
higher-rank torus actions on $S$-arithmetic homogeneous spaces are
algebraic \cite{EinsiedlerLindenstrauss2019}.

Joining rigidity has also been extended to infinite-volume rank-one spaces.
Mohammadi and Oh \cite{MohammadiOh2016} classified locally finite joinings of
horospherical subgroup actions on $\Gamma\backslash G$, where
$G=\mathrm{PSL}_2(\mathbb R)$ or
$\mathrm{PSL}_2(\mathbb C)$ and $\Gamma$ is geometrically finite and
Zariski dense, extending Ratner's theorem from lattices to infinite-volume
hyperbolic geometry. Pan \cite{Pan2018} obtained a joining classification for
horocycle flows on certain abelian covers, while related work of Oh--Pan
\cite{OhPan2019} established local mixing and classified horospherically
invariant measures on abelian covers.

The goal of this paper is to initiate a joining rigidity theory in
holomorphic dynamics.

\subsection{Statement of results}
For $i=1,2$, let
$f_i\colon \mathbb P^1\to\mathbb P^1$ be 
a rational map of
degree at least $2$,  $J(f_i)$ its Julia
set and  $\mu_i$ 
its measure of maximal entropy
\cite{Lyubich83,FLM83}. Thus $J(f_i)=\operatorname{supp}(\mu_i)$.
Set $F:=f_1\times f_2$, and denote by $\pi_i$,
$i=1,2$, the two coordinate projections.
A \emph{joining} of $f_1$ and $f_2$ is an $F$-invariant probability
measure $\nu$ on $J(f_1)\times J(f_2)$ such that
$(\pi_1)_*\nu=\mu_1$ and $(\pi_2)_*\nu=\mu_2$.
When $f_1=f_2=f$, we call such a measure a \emph{self-joining} of $f$.

Our main result is a structure theorem for every non-product ergodic
joining. We call the pair $(f_1,f_2)$ \emph{joining-exceptional} if there
exist an integer $n\geq1$ and an irreducible algebraic curve
$C\subset\mathbb P^1\times\mathbb P^1$ such that
$F^n(C)=C$ and both projections
$\pi_1|_C$ and $\pi_2|_C$ are dominant and have generic degree
strictly greater than one.

\begin{thm}\label{thm_main}
Let $f_1,f_2\colon \mathbb P^1\to\mathbb P^1$ be rational maps of
degree at least $2$, and let $\mu_1,\mu_2$ be their measures of
maximal entropy. Let $\nu\neq\mu_1\times\mu_2$ be an ergodic joining
of $f_1$ and $f_2$. Then exactly one of the following two cases
occurs.

\begin{enumerate}
\item[(A)]
The joining $\nu$ charges the graph 
of a local biholomorphism, i.e., there
exist non-empty open sets $U,V\subset\mathbb P^1$ and a biholomorphism
$\varphi\colon U\to V$ such that $\nu(\Gamma_\varphi)>0$,
where $\Gamma_\varphi$ is the graph of $\varphi$.
In this case exactly
one of the following algebraic alternatives holds.

\begin{enumerate}
\item[(A1)]
There exist an integer $r\geq1$ and probability measures
$\nu_0,\ldots,\nu_{r-1}$ such that
$\nu=r^{-1}\sum_{i=0}^{r-1}\nu_i$ and
$F_*\nu_i=\nu_{i+1\bmod r}$, where, for every $i$, either
$\nu_i=(\operatorname{id},A_i)_*\mu_1$ for some rational map $A_i$
satisfying $f_2^r\circ A_i=A_i\circ f_1^r$, or
$\nu_i=(B_i,\operatorname{id})_*\mu_2$ for some rational map $B_i$
satisfying $f_1^r\circ B_i=B_i\circ f_2^r$.

\item[(A2)]
The pair $(f_1,f_2)$ is joining-exceptional, and the invariant curve
$C$ in the definition can be chosen so that $\nu(C)>0$.
\end{enumerate}

\item[(B)]
The joining $\nu$ charges no graph of a local biholomorphism.
In this case $\nu$ gives rise, in the sense made precise in Theorem
\ref{thm:Cnu-infinite}, to a compact non-discrete family
$\mathcal H\subset\operatorname{Hol}(U,V)$ in the compact-open
topology, for suitable non-empty open sets $U,V\subset\mathbb P^1$,
such that every $h\in\mathcal H$ is non-constant and is locally
biholomorphic, possibly
after restriction to a smaller non-empty open subset.
\end{enumerate}
\end{thm}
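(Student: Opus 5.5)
The dichotomy (A)/(B) is tautological once the notion of "charging a local biholomorphic graph" is fixed. So the content of the theorem is two implications: in case (A), one of (A1), (A2) holds, and exactly one does; in case (B), the family $\mathcal H$ of Theorem \ref{thm:Cnu-infinite} exists. I would organize the proof around the normalized inverse branch transfer maps announced in the abstract. Fix a $\nu$-generic point $(x,y)$ and, for $n$ large, inverse branches $g_{1,n}$ of $f_1^n$ and $g_{2,n}$ of $f_2^n$ along the $\nu$-typical backward orbit. These branches exist on discs of definite size outside a set of small $\mu_i$-measure, by the standard Lyubich/Briend--Duval estimates for maximal entropy measures. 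Then form the transfer maps $T_n = f_2^n\circ(\text{suitable local map})\circ g_{1,n}$, normalized by affine charts so that the resulting family is normal, and let $\mathcal C_\nu$ be its set of cluster limits.

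\textbf{Case (A).} Suppose $\nu(\Gamma_\varphi)>0$. By Poincar\'e recurrence applied to $F^{-1}$ through the natural extension, a positive-measure set of points of $\Gamma_\varphi$ returns to $\Gamma_\varphi$ under backward iteration. Pulling $\varphi$ back by the contracting inverse branches and pushing forward again gives, on small discs, relations $f_2^{n}\circ\varphi = \varphi'\circ f_1^{n}$ between local biholomorphisms $\varphi,\varphi'$ whose graphs both have positive mass. Since $\nu$ is ergodic and $F$-invariant, the union of the graphs $F^k(\Gamma_\varphi)$ has full measure. A finite-mass argument then shows there are only finitely many essentially distinct such graphs, say $r$ of them up to the $F$-action, giving a cyclic decomposition $\nu=r^{-1}\sum\nu_i$. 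The next step is to globalize $\varphi$ using the local intertwining $f_2^{r}\circ\varphi=\varphi\circ f_1^{r}$ on a neighborhood of a point of $J(f_1)$.

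In the non-Latt\`es case I would invoke the local-to-global rigidity principle, in the style of Levin, Przytycki and Dinh--Sibony: a local holomorphic map intertwining two iterates on an open set meeting the Julia set extends by analytic continuation along inverse branches to an algebraic correspondence. In the Latt\`es case I would lift to the torus via the affine uniformization, where the intertwining becomes affine and hence algebraic. In either case the Zariski closure of $\Gamma_\varphi$ is an irreducible curve $C$ with $F^{r}(C)=C$ and $\nu(C)>0$. If one projection of $C$ has degree one, $C$ is a graph or transpose graph of a rational map $A_i$ or $B_i$, and we obtain (A1). Otherwise both degrees exceed one, which is (A2).

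Exclusivity of (A1) and (A2) comes from intersection multiplicities. An irreducible invariant curve with both projection degrees $>1$ cannot coincide with a graph, and two distinct irreducible curves meet in finitely many points, which are $\nu$-null since $\mu_1$ has no atoms.

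\textbf{Case (B).} If no local biholomorphic graph is charged, I would show that $\mathcal C_\nu$ is infinite. If it were finite, each cluster map $h$, which is non-constant because of the normalization and the fact that $\nu$ is not the product, would have a graph that by a Fubini/disintegration argument carries positive conditional mass. This would contradict the hypothesis of (B). A compact infinite subset of $\operatorname{Hol}(U,V)$ is non-discrete. Non-constancy of the limits follows from the degree-matching of the inverse branches (Hurwitz), and local biholomorphicity holds off the discrete critical set, hence after shrinking.

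\textbf{Main obstacle.} I expect the main obstacle to be the local-to-global step in (A). Analytic continuation of $\varphi$ along all backward orbits must be shown to produce a finite-valued, hence algebraic, correspondence rather than an infinite-valued one. I would try to bound the number of branches by the positive $\nu$-mass of each branch's graph together with $\nu(\mathbb P^1\times\mathbb P^1)=1$.
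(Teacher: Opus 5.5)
\textbf{Verdict: there is a genuine gap.} Your case (A) local-to-global step fails as stated. Case (B) is essentially the paper's argument.

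\textbf{Case (A), non-Latt\`es.} The principle you invoke says that a local holomorphic map intertwining iterates near a point of $J(f_1)$ extends to an algebraic correspondence. This is false. If $p$ is a repelling fixed point of $f^r$ with Koenigs coordinate $L$, then every germ $L^{-1}\circ(c\,L)$, $c\in\mathbb C^*$, commutes with $f^r$ near $p$, and these germs are in general transcendental. What actually globalizes (Inou's theorem, in the form of Ji--Xie, Theorem 2.1) is a relation $\varphi\circ f_1^N=f_2^N\circ\varphi$ on an open set $W\Subset D$ such that $f_1^N\colon W\to D$ is \emph{non-injective} and has a repelling fixed point. Nothing in your sketch produces this non-injectivity. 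A single return relation does not, and neither does a relation $f_2^n\circ\varphi=\varphi'\circ f_1^n$ between different germs.

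The missing idea is that returns occur along many different inverse branches of $f_1^N$, all intertwining the same $\varphi$. The paper does this as follows.
\begin{enumerate}
\item Take a compact $K$ of positive $\hat\nu$-measure projecting into $\Gamma_\varphi$ over $D^-\Subset D$, on which the inverse branches of $f_1^n$ are defined on $D$ and uniformly contracting. Then $\hat\nu(K\cap\hat F^{-N}K)\ge c$ for arbitrarily large $N$.
\item Using $f_1^*\mu_1=d_1\mu_1$ and a pigeonhole argument, one gets at least $\delta d_1^N$ distinct branches $\alpha\colon D\to D$ of $f_1^N$ with $\alpha(D)\Subset D$ and $f_2^N\circ\varphi\circ\alpha=\varphi$. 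Equality on a set of positive $\mu_1$-measure upgrades to all of $D$ by the identity theorem.
\item Two such branches give $W=\alpha_1(D)\cup\alpha_2(D)$, on which $f_1^N$ is non-injective, and the fixed point of $\alpha_1$ is repelling for $f_1^N$.
\end{enumerate}

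Your proposed repair, bounding the number of continued branches by their $\nu$-mass, cannot work. Consider the branch $f_2^k\circ\varphi\circ g$ over $V'$, where $g$ is an inverse branch of $f_1^k$ with $g(V')\subset U$. The only mass you can guarantee for its graph is $\nu(\Gamma_\varphi\cap(g(V')\times\mathbb P^1))\le\mu_1(g(V'))\le d_1^{-k}$. Branches obtained by continuation along general paths need carry no mass a priori. Similarly, the cyclic decomposition cannot come from counting ``essentially distinct'' graphs before algebraicity. In the paper it comes afterwards, from the finite $F$-orbit of the irreducible invariant curve $C$.

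\textbf{Case (A), Latt\`es.} This part has two gaps.
\begin{enumerate}
\item You must treat the mixed case where only one map is Latt\`es; there neither Ji--Xie nor a double uniformization applies. The paper shows that $\varphi$ pushes part of the absolutely continuous measure $\mu_1$ to a non-zero absolutely continuous part of $\mu_2$. Invariance and ergodicity then force $\mu_2\ll m$, so $f_2$ is Latt\`es by Zdunik's theorem.
\item ``Affine, hence algebraic'' is not enough. The local intertwining at a fixed point only forces the lift to be $H(z)=cz+e$, with $c$ unconstrained. The line $w=cz+e$ descends to an algebraic curve in $E_1\times E_2$ only if $c$ maps a finite-index subgroup of $\Lambda_1$ into $\Lambda_2$. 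The paper proves this commensurability again from the abundance of returns. Fix a common rotational part, leaving at least $\delta'|a_1|^{2N}$ returning branches. Their translation parts $\lambda_\alpha$ are distinct, have modulus $O(|a_1|^N)$, and lie in one coset of $\{\lambda\in\Lambda_1: c_0\lambda\in\Lambda_2\}$, where $c_0$ differs from $c$ by a root of unity. This is impossible unless that subgroup has rank two.
\end{enumerate}

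\textbf{Case (B).} Your argument matches the paper's: a finite cluster family has a member occurring on positive measure, and since $\Phi_{z,n}(0)=0$ its graph passes through $(x_0,y_0)$. Non-constancy comes from the derivative normalization alone, not from $\nu\neq\mu_1\times\mu_2$.
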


As an immediate consequence, if $\deg f_1\neq\deg f_2$, the
algebraic alternatives \textup{(A1)} and \textup{(A2)} are
impossible. Hence every non-product ergodic joining charges no local
biholomorphic graph and falls into case \textup{(B)}; see Corollary
\ref{cor:different-degrees}. We also note that the trivial product joining
gives rise to the same non-discrete cluster phenomenon, although in
that case it is automatic from the transfer-map construction; see
Remark \ref{rmk:product-A2}.

\medskip

There is
an important difference
between our setting and the classical
homogeneous examples discussed above. For a rational map of degree $d$, the maximal-entropy system is
measurably isomorphic to the one-sided uniform $d$-Bernoulli shift
\cite{HeicklenHoffman,ManeBernoulli}.
Hence it admits many ergodic
self-joinings having no evident complex-analytic origin, and
an
algebraic classification of all measurable joinings cannot hold.
Theorem \ref{thm_main} instead shows that once a joining charges a
local biholomorphic graph, the resulting local relation is forced to
globalize algebraically. 
This retains the guiding principle of joining
rigidity in homogeneous dynamics while accounting for the much larger
measurable joining space present in rational dynamics; see
\cite{Ratner1983,EinsiedlerLindenstrauss2007,
EinsiedlerLindenstrauss2019,MohammadiOh2016,OhPan2019,Pan2018}.
Concrete examples of case \textup{(B)}, with either diffuse or finite
atomic conditional measures, are given in Section
\ref{sec:measurable-examples}.

\subsection{Diagonal dynamics and self-joinings}
\label{sec:self-joinings}
In the diagonal case $f_1=f_2=f$, Theorem \ref{thm_main} gives a
structure theorem for non-product ergodic self-joinings of a single
rational map. 
While case \textup{(B)} arises from the same purely
measure-theoretic phenomena as in the general setting, case
\textup{(A)} admits a more explicit description in the diagonal case.
In alternative \textup{(A1)}, the rational maps appearing in the
graph and transpose-graph components commute with an iterate of $f$.
The curves in \textup{(A2)} belong to the class of non-vertical and
non-horizontal invariant curves studied by Pakovich
\cite{PakovichInvariantCurves}.
Indeed, setting
$A:=f^n$, the curve $C$ is $(A,A)$-invariant and, since both
projections are dominant, it is neither vertical nor horizontal.
Pakovich's classification gives a more explicit description of such
curves when $A$ is not a generalized Latt\`es map.
More precisely, by
\cite[Theorem 1.2]{PakovichInvariantCurves},
there exist rational maps
$U_1,U_2,V_1,V_2\colon\mathbb P^1\to\mathbb P^1$ commuting with $A$
and an integer $d\geq0$ such that
$$
U_1\circ V_1=U_2\circ V_2=A^{\circ d},
\qquad
V_1\circ U_1=V_2\circ U_2=A^{\circ d},
$$ and the map
$t\mapsto(U_1(t),U_2(t))$ parametrizes $C$.
Thus, outside the generalized Latt\`es case in Pakovich's sense,
these diagonal joining-exceptional curves are described in terms of
rational maps commuting with an iterate of $f$.

The condition
\textup{(A2)} is designed to capture the same phenomenon as the classical
exceptional rational maps, namely the monomials, Chebyshev maps and
Latt\`es maps, which have large symmetry or semiconjugacy structures
and admit algebraic correspondences invariant under the diagonal
dynamics. In particular, the classical exceptional maps fall into
this framework; see Proposition \ref{prop:def-dim-1}.

\subsection{Joining rigidity as a unifying framework in complex dynamics}
The joining viewpoint provides a natural framework for several
rigidity phenomena that have previously been studied separately in
complex dynamics: local and global symmetries of Julia sets,
semiconjugate and
commuting rational maps, and the promotion of
measure-preserving local relations to algebraic correspondences.

Classical work of Baker--Eremenko \cite{BakerEremenko1987}, Beardon
\cite{Beardon1992}, Eremenko \cite{EremenkoCommuting}, Levin
\cite{Levin1990}, Levin--Przytycki
\cite{LevinPrzytycki1997}, and Dinh \cite{Din00}
shows that rational maps with the same Julia set, as well as local and
global symmetries of Julia sets, are strongly constrained. 

Semiconjugacies provide a natural source of relations between two
dynamical systems; see, for example,
\cite{PakovichSemiconjugate}. The relations in alternative
\textup{(A1)} are of this type.
In the
diagonal case, Pakovich's classification, discussed in Section
\ref{sec:self-joinings}, describes the invariant curves in
alternative \textup{(A2)} in terms of rational maps commuting with an
iterate. For general pairs of a fixed degree,
\cite{PakovichPeriodicCurves} shows that a general pair admits a
non-vertical, non-horizontal periodic curve if and only if the two
maps are Möbius-conjugate.

Commuting rational maps arise as the diagonal specialization of the
semiconjugacy picture. Their structure, and in particular their
connection with the classical exceptional families, goes back to Ritt
\cite{RittPermutable}
and was further developed by Eremenko and Pakovich; see
\cite{EremenkoCommuting,PakovichCommuting}.
Recent work of Beaumont \cite{Bea25} on centralizers of iterates is
closely related to alternative \textup{(A1)}.
For related results on commuting endomorphisms in higher dimension, we refer to \cite{DinhCommutingC2,DinhSibonyCommutingPk,Kau18}.

More recently, Dujardin--Favre--Gauthier \cite{DFG23} and Ji--Xie \cite{JiXieLocalRigidityJuliaSets} proved
local-to-global rigidity results in
which suitable local symmetries preserving Julia sets or measures of
maximal entropy are promoted to algebraic correspondences. 
Related
local-to-global rigidity phenomena also appear in 
\cite{JiXieHomoclinic}, where periodic and homoclinic data are used to obtain global
rigidity statements for rational maps, including a new proof of
McMullen's multiplier-rigidity theorem \cite{McMullen1987}.

Related measure and Julia set rigidity results occur throughout complex dynamics.
For
rational maps on $\mathbb P^1$, see
\cite{LevinPrzytycki1997,Ye15,PakovichMaxCritical}. Related questions
for 
regular polynomial endomorphisms of $\mathbb C^k$ are considered in
\cite{Ueno10,ZhangMeasureRigidity}, while we refer to
\cite{Lamy01,DujFav17,Vigny26} and
references therein for the case of automorphisms.
Measure-theoretic rigidity phenomena have also been studied in the setting of 
random dynamics of projective surfaces \cite{CantatDujardinRandom}.

Although the algebraic part of Theorem \ref{thm_main} belongs to this
broader circle of rigidity phenomena, its starting point is different.
We assume neither an equality of maximal-entropy measures nor a local
analytic symmetry, semiconjugacy, or algebraic correspondence.
Instead, our starting point is a {\it measure-theoretic} coupling of the
two maximal-entropy systems, with no analytic regularity assumed.
Theorem \ref{thm_main} 
shows that once such a coupling charges a local
biholomorphic graph, this local relation globalizes to an invariant
algebraic curve. Related questions and possible extensions are discussed
in Section \ref{sec:further-questions}.

\subsection{Strategy of the proof}
The proof of Theorem \ref{thm_main} consists of two main parts: a
procedure for extracting local holomorphic structure from an ergodic
joining and a local-to-global argument applied to the resulting
cluster family. When a local biholomorphic graph carries positive
joining measure, this argument promotes the local graph to an
invariant algebraic curve, leading to alternative \textup{(A1)} or
\textup{(A2)} in case \textup{(A)}; otherwise the cluster
construction itself yields the non-discrete family in case
\textup{(B)}.

For the extraction step, we construct transfer maps
by comparing
inverse branches along typical pairs of inverse orbits. 
After normalization,
the transfer maps have
derivative equal to $1$ at the base point, and Koebe distortion gives
uniform control on fixed reference domains. Hence,
after passing to fixed reference charts, the transfer maps form a
normal family. For each recurrent point of the lifted joining, we
consider the cluster limits of the corresponding normalized transfer
maps and let $\mathcal C_\nu$ be the union of these cluster sets. A
cluster germ is called {\it admissible}
if it occurs over a set of positive
lifted joining measure. Such a germ gives a local biholomorphic graph
of positive $\nu$-measure.

\smallskip
\noindent
\textbf{Positive-mass local relation.}
Suppose that $\nu$ charges the graph of a local biholomorphism
$\varphi$. Recurrence and contraction of inverse branches produce an
iterate $N\geq1$, a coordinate disc $D$, and two distinct inverse
branches $\alpha_1,\alpha_2\colon D\to D$ of $f_1^N$, with relatively
compact images, such that
$f_2^N\circ\varphi\circ\alpha_j=\varphi$ on $D$. Thus $f_1^N$ is non-injective on the union of the two return domains;
one of the contracting return branches also yields a repelling fixed
point of $f_1^N$.

When neither map is Latt\`es, the generalized form of Inou's theorem
\cite{Inou11}
proved by Ji--Xie \cite[Theorem 2.1]{JiXieLocalRigidityJuliaSets}
promotes
this local intertwining relation to an invariant irreducible algebraic
curve. If one map is Latt\`es, we show that the charged local graph first forces the
other map to be Latt\`es as well. We then use affine uniformization and the
abundance of recurrent inverse branches to show that the slope of
the lifted affine relation maps a finite-index sublattice of the first
translation lattice into the second. This again produces an invariant
algebraic curve. In either case, the projection degrees give
\textup{(A1)} or \textup{(A2)}.
Since $\mu_1$ and $\mu_2$ are non-atomic, so is $\nu$; as distinct
irreducible curves have finite intersection, alternatives
\textup{(A1)} and \textup{(A2)} are then mutually exclusive.

\smallskip
\noindent
\textbf{No positive-mass local relation.}
Suppose instead that $\nu$ charges no graph of a local
biholomorphism. Then no cluster germ can be admissible. Since a finite
cluster family necessarily contains an admissible germ,
$\mathcal C_\nu$ must be infinite. Its closure in the ambient normal
family is therefore compact and non-discrete. The uniform derivative
lower bound from the normalization shows that every element of this
closure is non-constant and locally biholomorphic after restriction to
a smaller non-empty open set. Passing back through the fixed reference
charts gives the family in case \textup{(B)}.

\smallskip

Beyond this dichotomy,
we show that recurrent returns preserve these
fiberwise cluster sets up to natural left and right coordinate changes; see Section 
\ref{sec:cluster-gauge}.

\subsection{Organization of the paper}
Section \ref{sec:transfer-cluster} develops the normalized inverse
branch construction and the associated cluster families. Sections
\ref{sec:finite-algebraization} and \ref{sec:Cnu-infinite} establish the
two cases of Theorem \ref{thm_main} and Section \ref{sec:pf-main-thm} completes its proof.
Section \ref{sec:measurable-examples} gives examples illustrating the
necessity of case \textup{(B)} for arbitrary
measure-theoretic joinings.
Section
\ref{sec:cluster-gauge} studies the finer fiberwise cluster structure,
and Section \ref{sec:classical-ex-maps} shows that the classical
exceptional rational maps are joining-exceptional. Finally, Section
\ref{sec:further-questions} discusses several directions for further
study.

\subsection*{Acknowledgments}
The authors would like to thank the 2025 Summer Collaborators Program at the Institute for Advanced Study, where work on this project began.
This project has received funding from the
project ANR TIGerS (ANR-24-CE40-3604)
and from the MIUR Excellence Department Project awarded to the
Department of Mathematics of the University of Pisa,
CUP I57G22000700001.
The first author is affiliated to the GNSAGA group of INdAM. The second author is supported by National Science Foundation DMS-2554272.

\section{Local transfer maps and cluster germs}
\label{sec:transfer-cluster}
This section carries out the first step in the proof of
Theorem \ref{thm_main}: extracting local holomorphic structure from
an ergodic joining.
Section \ref{subsec:inverse-branches} 
summarizes
inverse-branch estimates
along typical inverse orbits. In Section \ref{subsec:normalized-transfer},
we restrict to a positive-measure set on which the inverse branches in
both coordinates are defined on a common disc and construct normalized
inverse branch transfer maps. Section
\ref{subsec:fixed-reference-charts} passes to fixed reference charts
and gives a uniform derivative lower bound for cluster limits. In
Section \ref{subsec:cluster-germs}, we define the total cluster family
$\mathcal C_\nu$ and the admissible germs $\mathcal A_\nu$, and
realize every admissible germ as a local graph charged by the joining.

\smallskip

Throughout this section, we let $f_i\colon \mathbb P^1\to\mathbb P^1$,
$i=1,2$, be rational maps of degree at least $2$, 
$\mu_i$ 
their measures of maximal entropy \cite{Lyubich83,FLM83},
$J(f_i)=\operatorname{supp} \mu_i$ their Julia sets,
and $\nu$  an ergodic
joining of $f_1$ and $f_2$. 
For $i=1,2$, we let $(\hat X_i,\hat f_i)$ be the natural extension of
$(J(f_i),f_i)$ and 
$\hat\mu_i$
the lift of $\mu_i$ to
$\hat X_i$. We denote by
$\pi\colon\hat X_i\to J(f_i)$ the projection
$\pi(\hat x)=x_0$.
We will use without further comment that each
$\mu_i$ is non-atomic and mixing; 
in particular, $\mu_i$ is ergodic for $f_i^r$ for every
$r\geq1$. We will also use the standard fact that the natural extension
of an ergodic invariant measure is ergodic \cite{CFS12}.

Set
$F:=f_1\times f_2$ and
$\hat F:=\hat f_1\times\hat f_2$.
If $\nu$ is an $F$-invariant measure on
$J(f_1)\times J(f_2)$, we denote by $\hat\nu$ its lift to
$\hat X_1\times\hat X_2$. Abusing notation, we also write
$\pi\colon\hat X_1\times\hat X_2\to J(f_1)\times J(f_2)$
for the natural projection. We denote by $\pi_i$, $i=1,2$, the
coordinate projections on $\mathbb P^1\times\mathbb P^1$.

\subsection{Inverse branches and estimates along one orbit}
\label{subsec:inverse-branches}
We begin by fixing centered conformal charts with uniform local
control. Choose a finite open cover
$\{W_\alpha\}_{\alpha=1}^N$ of $\mathbb P^1$ such that, for each
$\alpha$, there is a smooth family of centered conformal charts
$$
\psi_{\alpha,p}\colon
B_{\mathrm{sph}}(p,r_0)\to\mathbb C,
\qquad
\psi_{\alpha,p}(p)=0,
\qquad
p\in W_\alpha,
$$
where $r_0>0$ is independent of $\alpha$ and $p$.

Choose open sets $W'_\alpha\Subset W_\alpha$
such that $\{W'_\alpha\}_{\alpha=1}^N$ still covers
$\mathbb P^1$.
Choose a measurable partition
$
\mathbb P^1=P_1\sqcup\cdots\sqcup P_N
$
such that
$
P_\alpha\subset W'_\alpha.
$
For $p\in P_\alpha$, set
$\psi_p:=\psi_{\alpha,p}$.
Since
$P_\alpha\subset W'_\alpha\Subset W_\alpha$ and there are only
finitely many chart families, after decreasing $r_0$ if necessary
the chosen charts $\psi_p$ and their inverses have uniform
distortion bounds on slightly smaller spherical balls, independently
of $p$.
Moreover,
on $P_\alpha$ the chosen
charts belong to the fixed smooth family
$p\mapsto\psi_{\alpha,p}$.

Fix $i\in\{1,2\}$, 
$\hat z=(z_n)_{n\in\mathbb Z}\in\hat X_i$ and 
$n\geq1$.
On a suitable neighbourhood $U_{i,\hat z,n}$ of $z_0$, denote by
$g_{i,\hat z,n}$ the inverse branch of $f_i^n$ along $\hat z$,
characterized by
$g_{i,\hat z,n}(z_0)=z_{-n}$ and
$f_i^n\circ g_{i,\hat z,n}=\operatorname{id}$.
Whenever $g_{i,\hat z,n}$ is defined on a spherical ball
$B_{\mathrm{sph}}(z_0,\rho)$, set
\begin{equation}\label{eq:G}
G_{i,\hat z,n}
:=
\psi_{z_{-n}}
\circ
g_{i,\hat z,n}
\circ
\psi_{z_0}^{-1}.
\end{equation}

The next proposition summarizes the inverse branch estimates that we will need.

\begin{prop}
\label{prop:inverse branch-package}
For every $\varepsilon>0$ and $i\in\{1,2\}$, there exist a measurable
$\hat f_i$-invariant set
$\hat X_{i,\varepsilon}\subset\hat X_i$ with
$\hat\mu_i(\hat X_{i,\varepsilon})=1$ and a measurable function
$\rho_{i,\varepsilon}\colon
\hat X_{i,\varepsilon}\to(0,+\infty)$
such that the following hold for every
$\hat z\in\hat X_{i,\varepsilon}$ and every $n\geq1$.

\begin{enumerate}
\item
The inverse branch $g_{i,\hat z,n}$ is defined on
$B_{\mathrm{sph}}(z_0,\rho_{i,\varepsilon}(\hat z))$.

\item
The map $G_{i,\hat z,n}$ defined in \eqref{eq:G} is holomorphic and
univalent on
$D(0,\rho_{i,\varepsilon}(\hat z))$.

\item
For every $0<\lambda<1$, there exists a constant
$\kappa(\lambda)\geq1$, independent of $i$, such that, for every
$u,v\in
D(0,\lambda\rho_{i,\varepsilon}(\hat z))$,
we have
$$
\kappa(\lambda)^{-1}
\leq
\left|
\frac{G'_{i,\hat z,n}(u)}
     {G'_{i,\hat z,n}(v)}
\right|
\leq
\kappa(\lambda),
$$
and
$$
\kappa(\lambda)^{-1}|G'_{i,\hat z,n}(0)|
\leq
\frac{
\operatorname{diam}
G_{i,\hat z,n}
\bigl(D(0,\lambda\rho_{i,\varepsilon}(\hat z))\bigr)
}{
\lambda\rho_{i,\varepsilon}(\hat z)
}
\leq
\kappa(\lambda)|G'_{i,\hat z,n}(0)|.
$$

\item
For every $\hat z\in\hat X_{i,\varepsilon}$, we have
$
|G'_{i,\hat z,n}(0)|\to0
$
as $n\to\infty$. Consequently, for every $0<\lambda<1$,
$
\operatorname{diam}
G_{i,\hat z,n}
(D(0,\lambda\rho_{i,\varepsilon}(\hat z)))\to0.
$
\end{enumerate}
\end{prop}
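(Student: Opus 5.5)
The plan follows the standard Lyubich / Briend--Duval construction of inverse branches along $\hat\mu_i$-typical backward orbits. Items (1)--(2) come from summability of critical orbits, (3) is Koebe distortion, and (4) comes from positivity of the Lyapunov exponent $\chi(\mu_i)\ge\frac12\log d_i>0$.

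\textbf{Step 1: existence of branches on a fixed ball.} Let $\mathcal C_i$ be the critical set of $f_i$ and $P_i=\bigcup_{k\ge1}f_i^k(\mathcal C_i)$ the postcritical set. Since $\mu_i$ has continuous local potentials, it gives small mass to small discs: $\mu_i(B(c,r))\le C r^{\beta}$ for some $\beta>0$. Set $\delta_k:=e^{-\varepsilon' k}$. Then
$$\sum_{k\ge1}\mu_i\bigl(B(f_i^k(\mathcal C_i),\delta_k)\bigr)<\infty.$$
For the backward orbit, invariance of $\hat\mu_i$ gives $\hat\mu_i(z_{-k}\in E)=\mu_i(E)$. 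A Borel--Cantelli argument on the events ``$z_{-k}$ lies within $\delta_k$ of $f_i^{j}(\mathcal C_i)$'' therefore shows that, for a.e.\ $\hat z$, the backward orbit eventually stays exponentially far from the critical values along it. Combined with the contraction of Step 3, this lets us pull back a ball $B(z_0,\rho)$ inductively without ever meeting a critical value. I would run the standard induction: if $g_{i,\hat z,k}(B(z_0,\rho))\subset B(z_{-k},r_k)$ with $r_k\ll \delta_k$, then the next branch exists and is univalent. The finitely many exceptional initial steps are absorbed by shrinking $\rho$.

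This gives a measurable $\rho_{i,\varepsilon}(\hat z)>0$. I would use the usual trick (Briend--Duval, or Berteloot--Dupont--Molino) of allowing $\rho_{i,\varepsilon}(\hat f_i^{-1}\hat z)\ge e^{-\varepsilon}\rho_{i,\varepsilon}(\hat z)$, i.e.\ tempered behaviour. Replacing the full-measure set $\hat X_{i,\varepsilon}$ by $\bigcap_{m\in\mathbb Z}\hat f_i^m(\cdot)$ makes it invariant. Shrinking $\rho$ further, and using the uniform radius $r_0$ of the charts, makes $G_{i,\hat z,n}$ well defined, holomorphic and univalent on $D(0,\rho)$. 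This settles (1) and (2).

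\textbf{Step 2: distortion.} Since $G_{i,\hat z,n}$ is univalent on $D(0,\rho)$, the Koebe distortion theorem on $D(0,\lambda\rho)$ gives the derivative-ratio bound with a constant depending only on $\lambda$. Integrating the derivative bound, together with the Koebe $1/4$-theorem for the lower bound, gives the diameter comparison with $|G'(0)|$. The constant is independent of $i$, $\hat z$ and $n$ because the charts are normalized with uniform distortion.

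\textbf{Step 3: contraction.} By the chain rule, $|G'_{i,\hat z,n}(0)|$ is comparable, up to the uniform chart constants, to
$$|(f_i^n)'(z_{-n})|^{-1}=\prod_{k=1}^{n}|f_i'(z_{-k})|^{-1}.$$
Birkhoff's theorem applied to $\log|f_i'|\circ\pi\circ\hat f_i^{-k}$, which is integrable since $\mu_i$ has continuous potentials, shows that this product decays like $e^{-n(\chi-\varepsilon)}$ with $\chi\ge\frac12\log d_i>0$. This proves (4), and the diameter statement then follows from (3).

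\textbf{Main obstacle.} The delicate point is Step 1: getting a single radius that works for \emph{all} $n$ simultaneously. This needs the coupled induction between exponential contraction (Step 3) and the exponentially shrinking neighbourhoods of critical values, rather than a separate radius for each $n$. I would follow Briend--Duval's argument verbatim, with the Lyapunov estimate feeding the induction.
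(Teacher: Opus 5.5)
Your proposal is correct and follows the same route as the paper. The paper also relies on the standard Lyubich/Przytycki--Urba\'nski construction of inverse branches along $\hat\mu_i$-typical backward orbits for (1)--(2), though it only cites it rather than sketching it. It then uses Koebe distortion on $D(0,\lambda\rho)$ for (3), positivity of the Lyapunov exponent along backward orbits for (4), and intersection over all $\hat f_i$-iterates for invariance. One small correction to your sketch of the induction: the Borel--Cantelli events that matter at each step are closeness of $z_{-k}$ to the critical values $f_i(\mathcal C_i)$ of $f_i$ itself, not to $f_i^k(\mathcal C_i)$; these events are summable because $\log\operatorname{dist}(\cdot,f_i(\mathcal C_i))\in L^1(\mu_i)$.
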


\begin{proof}
By the standard inverse branch construction for rational maps
associated to typical inverse orbits for invariant ergodic measures
with positive Lyapunov exponent, for almost every $\hat z$ there is
a positive spherical radius, depending measurably on $\hat z$, on
which all the inverse branches $g_{i,\hat z,n}$ are defined; see,
for example, \cite[Chapter 11]{PUbook} and \cite{Berteloot10}.
Since the centered charts were chosen from finitely many smooth
local families with uniform distortion bounds, after decreasing
these radii by a uniform factor we may arrange simultaneously that
$g_{i,\hat z,n}$ is defined on
$B_{\rm sph}(z_0,\rho_{i,\varepsilon}(\hat z))$ and that
$G_{i,\hat z,n}$ is defined and univalent on
$D(0,\rho_{i,\varepsilon}(\hat z))$.
This proves (1) and (2).

Fix $0<\lambda<1$. For
$\hat z\in\hat X_{i,\varepsilon}$ and $n\geq1$, consider
$$
H_{i,\hat z,n}(w)
:=
\frac{1}{\rho_{i,\varepsilon}(\hat z)}
G_{i,\hat z,n}
\bigl(\rho_{i,\varepsilon}(\hat z)w\bigr),
\qquad |w|<1.
$$
Since $G_{i,\hat z,n}$ is univalent on
$D(0,\rho_{i,\varepsilon}(\hat z))$, the map
$H_{i,\hat z,n}$ is univalent on the unit disc. The Koebe distortion
theorem on $D(0,\lambda)$ 
gives the first
estimate in (3), with a constant $\kappa(\lambda)\ge1$ depending
only on $\lambda$.
Applying 
Koebe distortion and growth estimates,
and enlarging
$\kappa(\lambda)$ if necessary, gives the diameter estimate in (3).

Finally, intersect $\hat X_{i,\varepsilon}$ with the
full-measure set of inverse orbits for which the Lyapunov exponent
is realized, and then with all its iterates under $\hat f_i$.
After this replacement, $\hat X_{i,\varepsilon}$ is
$\hat f_i$-invariant and, for every
$\hat z\in\hat X_{i,\varepsilon}$, the derivative of the
inverse branch $g_{i,\hat z,n}$ at $z_0$ tends to zero as
$n\to\infty$. Since the derivatives of the centered charts and
their inverses are uniformly bounded, it follows that
$|G'_{i,\hat z,n}(0)|\to0$. The diameter convergence in (4)
then follows from the second estimate in (3).
\end{proof}

\subsection{Uniform domains and normalized transfer maps}
\label{subsec:normalized-transfer}
The inverse-branch domains in Proposition
\ref{prop:inverse branch-package} depend on the individual inverse
orbits in the two coordinates. We now restrict to a positive-measure
set on which the inverse branches in both coordinates are
simultaneously defined on a common disc of uniform size.

\begin{lem}\label{prop:good-pairs-good-times}
Fix $\varepsilon>0$. 
There exist a measurable set
$E\subset\hat X_{1,\varepsilon}\times
\hat X_{2,\varepsilon}$ with $\hat\nu(E)>0$ and a constant
$\rho_*>0$ such that, for every
$z=(\hat x,\hat y)\in E$ and every $n\geq1$, the maps
$G_{1,\hat x,n}$ and $G_{2,\hat y,n}$ are defined and
univalent on $D(0,\rho_*)$.
Moreover, if $\nu\neq\mu_1\times\mu_2$, then $E$ can be chosen so
that
\begin{equation}\label{eq:22-extra}
\hat\nu(E)>
(\hat\mu_1\times\hat\mu_2)(E)>0.
\end{equation}
\end{lem}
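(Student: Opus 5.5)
The plan is to build $E$ as a product of two sublevel-type sets of the radius functions $\rho_{1,\varepsilon}$ and $\rho_{2,\varepsilon}$. Then, in the non-product case, intersect with a rectangle on which $\hat\nu$ exceeds the product measure.

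\textbf{Step 1: marginals of $\hat\nu$.} First I check that the coordinate marginals of $\hat\nu$ on $\hat X_1$ and $\hat X_2$ are $\hat\mu_1$ and $\hat\mu_2$. The marginal $(\mathrm{pr}_i)_*\hat\nu$ is $\hat f_i$-invariant. Each of its time-$n$ projections is the time-$n$ projection of $\hat\nu$ followed by $\pi_i$, which is $(\pi_i)_*\nu=\mu_i$. By uniqueness of the lift to the natural extension, $(\mathrm{pr}_i)_*\hat\nu=\hat\mu_i$. Consequently
$$\hat\nu(\hat X_{1,\varepsilon}\times\hat X_{2,\varepsilon})=1,$$
since each factor has full $\hat\mu_i$-measure by Proposition \ref{prop:inverse branch-package}. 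The same full-measure statement holds for $\hat\mu_1\times\hat\mu_2$.

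\textbf{Step 2: the product sets $E_k$.} For $k\geq1$, set $B_{i,k}:=\{\hat z\in\hat X_{i,\varepsilon}:\rho_{i,\varepsilon}(\hat z)>1/k\}$ and $E_k:=B_{1,k}\times B_{2,k}$. These sets are measurable and increase to $\hat X_{1,\varepsilon}\times\hat X_{2,\varepsilon}$. For $z=(\hat x,\hat y)\in E_k$ and every $n\geq1$, Proposition \ref{prop:inverse branch-package}(2) shows that $G_{1,\hat x,n}$ and $G_{2,\hat y,n}$ are univalent on $D(0,1/k)$, because this disc lies in both $D(0,\rho_{1,\varepsilon}(\hat x))$ and $D(0,\rho_{2,\varepsilon}(\hat y))$. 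By Step 1, $\hat\nu(E_k)\to1$. So $E=E_k$ with $\rho_*=1/k$ and $k$ large proves the first assertion.

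\textbf{Step 3: a rectangle where $\hat\nu$ dominates.} Suppose now $\nu\neq\mu_1\times\mu_2$. Then $\hat\nu\neq\hat\mu_1\times\hat\mu_2$, since pushing the latter forward by $\pi$ gives $\mu_1\times\mu_2$. I claim there is a measurable rectangle $R=A_1\times A_2$ with $\hat\nu(R)>(\hat\mu_1\times\hat\mu_2)(R)$.

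Otherwise $\hat\nu\le\hat\mu_1\times\hat\mu_2$ on every rectangle. The complement of a rectangle is a disjoint union of three rectangles, and both measures are probability measures. Hence equality would hold on every rectangle. Rectangles form a $\pi$-system generating the product $\sigma$-algebra, so the two measures would be equal, a contradiction.

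\textbf{Step 4: intersecting and checking positivity.} Take $E:=R\cap E_k=(A_1\cap B_{1,k})\times(A_2\cap B_{2,k})$. As $k\to\infty$, $\hat\nu(R\cap E_k)\to\hat\nu(R)$ and $(\hat\mu_1\times\hat\mu_2)(R\cap E_k)\to(\hat\mu_1\times\hat\mu_2)(R)$, because both measures give full mass to $\hat X_{1,\varepsilon}\times\hat X_{2,\varepsilon}$. So the strict inequality persists for large $k$.

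The main point needing care is that $(\hat\mu_1\times\hat\mu_2)(E)>0$. This is handled by $E$ being a product set. Its product measure is
$$\hat\mu_1(A_1\cap B_{1,k})\cdot\hat\mu_2(A_2\cap B_{2,k}).$$
Each factor is at least $\hat\nu(E)$, since the marginals of $\hat\nu$ are the $\hat\mu_i$. Finally $\hat\nu(E)>(\hat\mu_1\times\hat\mu_2)(E)\ge0$, so $\hat\nu(E)>0$ and hence the product measure of $E$ is positive. This gives \eqref{eq:22-extra}.
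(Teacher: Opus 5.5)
Your proof is correct and follows essentially the same route as the paper: you exhaust $\hat X_{1,\varepsilon}\times\hat X_{2,\varepsilon}$ by the product sets where both radii exceed $1/k$, find a measurable rectangle on which the joining exceeds the product measure, and intersect, using continuity from below. The differences are minor: you find the rectangle directly in $\hat X_1\times\hat X_2$ (after noting $\pi_*(\hat\mu_1\times\hat\mu_2)=\mu_1\times\mu_2$) rather than pulling one back from $J(f_1)\times J(f_2)$ via a disintegration, and your bound $\hat\mu_i(\cdot)\geq\hat\nu(E)>0$ on each factor cleanly gives positivity of $(\hat\mu_1\times\hat\mu_2)(E)$, a point the paper leaves implicit (it needs $\mu_2(B)>0$).
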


\begin{proof}
For $M\geq1$, set
$$
F_M:=
\left\{
(\hat x,\hat y)\in
\hat X_{1,\varepsilon}\times\hat X_{2,\varepsilon}:
\quad
\rho_{1,\varepsilon}(\hat x)\geq\frac1M,\quad
\rho_{2,\varepsilon}(\hat y)\geq\frac1M
\right\}.
$$
The sets $F_M$ increase to
$\hat X_{1,\varepsilon}\times\hat X_{2,\varepsilon}$,
which has full $\hat\nu$-measure since the marginals of
$\hat\nu$ are $\hat\mu_1$ and $\hat\mu_2$.
Hence $\hat\nu(F_M)>0$ for all sufficiently large $M$.
Fix such an $M$
and set
$E:=F_M$ and
$\rho_*:=1/M$.
The first assertion
follows from Proposition
\ref{prop:inverse branch-package}.

Assume now that $\nu\neq\mu_1\times\mu_2$. 
Since measurable
rectangles determine a probability measure on the product,
there exist
measurable sets $A\subset J(f_1)$ and $B\subset J(f_2)$ such that
$\nu(A\times B)>\mu_1(A)\mu_2(B)$.
Disintegrating $\nu$ over the first coordinate, write
$\nu=\int\nu_x\,d\mu_1(x)$.
Then
$$
\int_A\nu_x(B)\,d\mu_1(x)>\mu_1(A)\mu_2(B).
$$
Hence there exist $\varepsilon'>0$ and a measurable set
$A'\subset A$ with $\mu_1(A')>0$ such that
$\nu_x(B)\geq\mu_2(B)+\varepsilon'$
for every $x\in A'$.
Set
$$
E_0:=
\bigl(\pi^{-1}(A')\cap\hat X_{1,\varepsilon}\bigr)
\times
\bigl(\pi^{-1}(B)\cap\hat X_{2,\varepsilon}\bigr).
$$
Since $\hat\mu_i(\hat X_{i,\varepsilon})=1$ and the
marginals of $\hat\nu$ are $\hat\mu_1$ and
$\hat\mu_2$, 
we have
$$
\begin{aligned}
\hat\nu(E_0)
& =\nu(A'\times B)
 =\int_{A'}\nu_x(B)\,d\mu_1(x) \\
& \geq
\mu_1(A')\bigl(\mu_2(B)+\varepsilon'\bigr)
 >
\mu_1(A')\mu_2(B) 
=
(\hat\mu_1\times\hat\mu_2)(E_0)>0.
\end{aligned}
$$
Since $E_0\cap F_M$ increases to $E_0$, continuity from below gives
$$\hat\nu(E_0\cap F_M)\to\hat\nu(E_0)
\qquad
\text{and}\qquad
(\hat\mu_1\times\hat\mu_2)(E_0\cap F_M)
\to
(\hat\mu_1\times\hat\mu_2)(E_0).
$$
For $M$ sufficiently large, this gives
$\hat\nu(E_0\cap F_M)>
(\hat\mu_1\times\hat\mu_2)(E_0\cap F_M)>0$.
Setting
$E:=E_0\cap F_M$ and $\rho_*:=1/M$
gives 
\eqref{eq:22-extra}.
\end{proof}

From now on, we fix a set
$E\subset
\hat X_{1,\varepsilon}\times\hat X_{2,\varepsilon}$
and a constant $\rho_*>0$ as in Lemma
\ref{prop:good-pairs-good-times}. When
$\nu\neq\mu_1\times\mu_2$, we choose $E$ so that \eqref{eq:22-extra} holds.
For
$z=(\hat x,\hat y)\in E$
and $n\geq1$, define
$$
a_n(z)
:=
\frac{G'_{2,\hat y,n}(0)}
     {G'_{1,\hat x,n}(0)}
\in\mathbb C^*
$$
and the corresponding \emph{normalized transfer map}
$$
\Phi_{z,n}
:=
G_{2,\hat y,n}^{-1}
\circ
\bigl(a_n(z)\cdot\operatorname{id}\bigr)
\circ
G_{1,\hat x,n}.
$$

\begin{prop}\label{prop:transfer-maps}
There exist radii $0<r<R<\rho_*$ such that, for every
$z=(\hat x,\hat y)\in E$ and every $n\geq1$, the map
$
\Phi_{z,n}\colon D(0,r)\to D(0,R)
$
is well-defined and holomorphic, and satisfies
$\Phi'_{z,n}(0)=1$.
\end{prop}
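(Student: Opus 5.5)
The plan is to control the image of a small disc under $(a_n(z)\cdot\operatorname{id})\circ G_{1,\hat x,n}$ and then invert $G_{2,\hat y,n}$ on it using Koebe's $1/4$ theorem. Throughout, write $\lambda=1/2$ and $\kappa=\kappa(1/2)$. On $E$ we have $\rho_{i,\varepsilon}\geq\rho_*$, so item (3) of Proposition \ref{prop:inverse branch-package} applies with $\rho_*$ in place of $\rho_{i,\varepsilon}$. More precisely, we apply Koebe distortion directly to the univalent maps $G_{1,\hat x,n}$ and $G_{2,\hat y,n}$ on $D(0,\rho_*)$, which Lemma \ref{prop:good-pairs-good-times} provides. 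This gives constants depending only on $\lambda$ and $\rho_*$, uniform in $z\in E$ and $n$.

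\textbf{Step 1 (forward image).} Set $c_1:=G'_{1,\hat x,n}(0)$ and $c_2:=G'_{2,\hat y,n}(0)$, so that $a_n(z)=c_2/c_1$. By the Koebe growth theorem applied to $G_{1,\hat x,n}$ on $D(0,\rho_*)$, for $|u|\leq r\leq\rho_*/2$ we have
\[
|G_{1,\hat x,n}(u)|\leq |c_1|\,\frac{|u|}{(1-|u|/\rho_*)^2}\leq 4|c_1|\,r .
\]
Hence $(a_n(z)\cdot\operatorname{id})\circ G_{1,\hat x,n}$ maps $D(0,r)$ into $D(0,4|c_2|r)$.

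\textbf{Step 2 (inverting $G_2$).} By the Koebe $1/4$ theorem, $G_{2,\hat y,n}(D(0,s))\supset D(0,|c_2|s/4)$ for every $s\leq\rho_*$. Take $s=R<\rho_*$ and choose $r$ with $4|c_2|r\leq |c_2|R/4$, that is $r\leq R/16$. Then $G_{2,\hat y,n}^{-1}$, taken as the inverse of $G_{2,\hat y,n}|_{D(0,R)}$, is a well-defined holomorphic map on the disc $D(0,4|c_2|r)$ with values in $D(0,R)$. For instance, $R=\rho_*/2$ and $r=\rho_*/32$ work, and these choices are independent of $z$ and $n$. Composing the two steps, $\Phi_{z,n}$ is holomorphic from $D(0,r)$ to $D(0,R)$.

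\textbf{Step 3 (normalization).} We have $\Phi_{z,n}(0)=0$, and the chain rule gives
\[
\Phi'_{z,n}(0)=\frac{1}{c_2}\cdot a_n(z)\cdot c_1=1 .
\]

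The only delicate point is uniformity. The constants must not depend on $|c_1|$ or $|c_2|$, which tend to $0$ by item (4). This is handled precisely because the ratio $a_n(z)$ cancels the scales: the forward image has size comparable to $|c_2|r$, while the inverse domain has size comparable to $|c_2|R$. Both are controlled by Koebe with constants depending only on $\rho_*$.
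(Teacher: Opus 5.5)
Your proof is correct and follows essentially the same route as the paper. The Koebe growth theorem puts $a_n(z)G_{1,\hat x,n}(D(0,r))$ inside a disc of radius comparable to $|G'_{2,\hat y,n}(0)|\,r$, and the Koebe quarter theorem puts a disc of radius comparable to $|G'_{2,\hat y,n}(0)|\,R$ inside $G_{2,\hat y,n}(D(0,R))$, so the common scale cancels. Your explicit choice $R=\rho_*/2$, $r=\rho_*/32$ simply makes concrete the paper's constants $c_1,c_2$ for $\lambda=1/2$.
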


\begin{proof}
By Lemma \ref{prop:good-pairs-good-times}, the maps
$G_{1,\hat x,n}$ and $G_{2,\hat y,n}$ are univalent on
$D(0,\rho_*)$.
Fix $0<\lambda<1$.
By Koebe distortion and the Koebe quarter theorem, there exist
constants $c_1,c_2>0$, depending only on $\lambda$, such that
$|G_{1,\hat x,n}(u)|
\leq
c_2|G'_{1,\hat x,n}(0)|\,|u|$
for $|u|<\lambda\rho_*$, while
$G_{2,\hat y,n}(D(0,\lambda\rho_*))$ contains
$
D\bigl(
0,c_1|G'_{2,\hat y,n}(0)|\lambda\rho_*
\bigr)$.
Choose $0<r<\lambda\rho_*$ so that
$c_2r<c_1\lambda\rho_*$. 
Since
$
|a_n(z)|\,|G'_{1,\hat x,n}(0)|
=
|G'_{2,\hat y,n}(0)|$, 
for $|u|<r$, we have
$$
|a_n(z)G_{1,\hat x,n}(u)|
\leq
c_2|G'_{2,\hat y,n}(0)|\,|u|
<
c_1|G'_{2,\hat y,n}(0)|\lambda\rho_*.
$$
It follows that
$
a_n(z)G_{1,\hat x,n}(D(0,r))
\subset
G_{2,\hat y,n}(D(0,\lambda\rho_*)).
$
Thus, setting $R:=\lambda\rho_*$, $\Phi_{z,n}$ is well-defined on $D(0,r)$ and takes values in
$D(0,R)$.
Finally, we have
$
\Phi'_{z,n}(0)
=
(G_{2,\hat y,n}^{-1})'(0)\,
a_n(z)\,
G'_{1,\hat x,n}(0)
=
1$. 
\end{proof}

\begin{cor}\label{cor:normal-family}
Let $r$ and $R$ be as in Proposition \ref{prop:transfer-maps}.
For every $z\in E$, the family
$\{\Phi_{z,n}\}_{n\geq1}$
is normal on $D(0,r)$. In particular, every sequence in this family
admits a subsequence converging locally uniformly on $D(0,r)$ to a
holomorphic map
$\Psi\colon D(0,r)\to D(0,R)$.
Every such limit map is non-constant.
\end{cor}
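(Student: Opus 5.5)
The plan is to derive normality from uniform boundedness via Montel's theorem, and non-constancy of limits from the derivative normalization via Weierstrass' theorem.

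\textbf{Uniform boundedness.} By Proposition \ref{prop:transfer-maps}, for every $z\in E$ and every $n\geq1$ the map $\Phi_{z,n}$ is holomorphic on $D(0,r)$ with values in $D(0,R)$. Hence $\{\Phi_{z,n}\}_{n\geq1}$ is uniformly bounded by $R$ on $D(0,r)$, and Montel's theorem shows that it is normal there.

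\textbf{Extraction of a limit.} Given any sequence $\Phi_{z,n_k}$, extract a subsequence converging locally uniformly on $D(0,r)$ to a holomorphic map $\Psi$, with $|\Psi|\leq R$. To obtain $\Psi(D(0,r))\subset D(0,R)$ rather than only the closed disc, I would not use the open mapping theorem. Instead I would return to the proof of Proposition \ref{prop:transfer-maps}. There, the image of $D(0,r)$ under $a_n(z)G_{1,\hat x,n}$ lies in $G_{2,\hat y,n}\bigl(D(0,\lambda\rho_*)\bigr)$ with a definite margin, because $c_2r<c_1\lambda\rho_*$. So on each compact $K\subset D(0,r)$, say $K=\overline{D}(0,r')$ with $r'<r$, the values $\Phi_{z,n}(K)$ lie in $G_{2,\hat y,n}^{-1}$ of the smaller disc
\[
D\bigl(0,c_2r'|G'_{2,\hat y,n}(0)|\bigr).
\]
Koebe distortion applied to $G_{2,\hat y,n}^{-1}$ should then confine $\Phi_{z,n}(K)$ to a fixed disc $D(0,R')$ with $R'<R$, independent of $n$. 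This closure issue is cosmetic; if it turns out to be delicate, one can simply shrink $r$ slightly.

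\textbf{Non-constancy.} Locally uniform convergence of holomorphic maps implies locally uniform convergence of their derivatives (Weierstrass). Since $\Phi'_{z,n}(0)=1$ for all $n$ by Proposition \ref{prop:transfer-maps}, we get $\Psi'(0)=\lim_k\Phi'_{z,n_k}(0)=1$. Therefore $\Psi$ is non-constant. By Hurwitz's theorem it is moreover univalent, since each $\Phi_{z,n}$ is a composition of univalent maps.

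The only step I expect to need care is the confinement of the limit's values to the open disc $D(0,R)$; the rest is immediate from Montel's and Weierstrass' theorems.
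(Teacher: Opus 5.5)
Your normality and non-constancy steps are the same as the paper's: Montel's theorem from the uniform bound $R$, then $\Psi'(0)=\lim_k\Phi'_{z,n_k}(0)=1$. You diverge only in passing from $\overline{D(0,R)}$ to $D(0,R)$. The paper does this in one line with the maximum principle: since $\Psi'(0)=1$, $\Psi$ is non-constant, so $|\Psi|\leq R$ on the connected disc $D(0,r)$ forces $|\Psi|<R$. There is no reason to avoid this argument; just establish non-constancy first.

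Your Koebe route is also valid and proves more, namely $\Phi_{z,n}(\overline{D}(0,r'))\subset D(0,R')$ for a fixed $R'<R$, uniformly in $n$ and $z$. To turn your ``should'' into a proof, use the growth bound $G_{2,\hat y,n}(D(0,s\rho_*))\supset D\bigl(0,|G'_{2,\hat y,n}(0)|\rho_*s(1+s)^{-2}\bigr)$. Then choose $s<\lambda$ with $c_2r'<\rho_*s(1+s)^{-2}$. Such an $s$ exists because any admissible Koebe constant satisfies $c_1\leq(1+\lambda)^{-2}$, so $c_2r'<c_2r<\rho_*\lambda(1+\lambda)^{-2}$, and $s\mapsto s(1+s)^{-2}$ is continuous and increasing on $(0,1)$. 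This uniformity is not needed for the corollary. Your fallback of shrinking $r$ would change the statement, so drop it.

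Your Hurwitz remark is correct: every limit is univalent, since each $\Phi_{z,n}$ is a composition of injective maps. This is a strengthening the paper does not record.
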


\begin{proof}
The maps $\Phi_{z,n}$ take values in the bounded disc $D(0,R)$, so
Montel's theorem gives normality. If
$\Phi_{z,n_j}\to\Psi$ locally uniformly, then
$
\Phi'_{z,n_j}(0)\to\Psi'(0)$.
By Proposition \ref{prop:transfer-maps},
$\Phi'_{z,n_j}(0)=1$, and hence $\Psi'(0)=1$.
Thus $\Psi$ is non-constant. Since $\Psi$ is a locally uniform limit
of maps with values in $D(0,R)$, the maximum principle gives
$\Psi(D(0,r))\subset D(0,R)$.
\end{proof}

\subsection{Fixed reference charts}
\label{subsec:fixed-reference-charts}
The transfer maps of Proposition \ref{prop:transfer-maps} are written
in moving centered charts attached to $(x_0,y_0)$. We now conjugate
them to a fixed pair of reference charts on a positive-measure subset
of $E$.

\begin{prop}\label{prop:reference-charts}
There exist points
$a\in J(f_1)$ and $b\in J(f_2)$,
centered
conformal reference charts $\chi_a$ and $\chi_b$,
spherical neighbourhoods
$U\ni a$ and $V\ni b$,
a measurable set $E^\sharp\subset E$ with
$\hat\nu(E^\sharp)>0$,
radii $0<r^\sharp<r$ and $R^\sharp>0$, compact families
$\mathcal H
\subset
\operatorname{Hol}(D(0,r^\sharp),\mathbb C)$
and
$\mathcal G
\subset
\operatorname{Hol}(D(0,R),\mathbb C)$,
and a constant $c_*>0$ such that the following hold.

\begin{enumerate}
\item
We have
$
\overline{\chi_a(U)}
\Subset D(0,r^\sharp)$ and
$\overline{D(0,R^\sharp)}
\Subset
\chi_b(V)$.
For every
$z=(\hat x,\hat y)\in E^\sharp$, we have
$x_0\in U$ and $y_0\in V$. 
The map
$h_{\hat x}
:=
\psi_{x_0}\circ
\chi_a^{-1}$ 
is well-defined on $D(0,r^\sharp)$ and belongs to
$\mathcal H$, while the
map
$k_{\hat y}
:=
\chi_b\circ
\psi_{y_0}^{-1}$
is well-defined on a fixed
neighbourhood of $\overline{D(0,R)}$ and its restriction to
$D(0,R)$ belongs to $\mathcal G$.

\item
For every $z\in E^\sharp$ and every $n\geq1$, the conjugated transfer
map
$$
\widetilde\Phi_{z,n}
:=
k_{\hat y}\circ\Phi_{z,n}\circ h_{\hat x}
$$
is well-defined and holomorphic on $D(0,r^\sharp)$, with values in
$D(0,R^\sharp)$.

\item
For every $z\in E^\sharp$, the family
$\{\widetilde\Phi_{z,n}\}_{n\geq1}$ is normal on
$D(0,r^\sharp)$
and every cluster limit
$
\widetilde\Psi\colon
D(0,r^\sharp)\to D(0,R^\sharp)
$
is non-constant.

\item
For every
$z=(\hat x,\hat y)\in E^\sharp$, setting
$q_z:= \chi_a(x_0)$, we have
$
|
k_{\hat y}'(0)\,
h_{\hat x}'(q_z)
|
\geq c_*.
$
Consequently, if $\widetilde\Psi$ is any cluster limit of
$\{\widetilde\Phi_{z,n}\}_{n\geq1}$, then
$
|\widetilde\Psi'(q_z)|\geq c_*.
$
\end{enumerate}
Moreover, if
$\hat\nu(E)>
(\hat\mu_1\times\hat\mu_2)(E)$,
then $E^\sharp$ can be chosen so that
\begin{equation}\label{eq:25-extra}
\hat\nu(E^\sharp)>
(\hat\mu_1\times\hat\mu_2)(E^\sharp).
\end{equation}
\end{prop}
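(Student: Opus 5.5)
The plan is to localize $E$ onto a single small cell in each coordinate, on which the moving charts $\psi_{x_0}$ and $\psi_{y_0}$ all come from one smooth family. All the conclusions then follow from compactness of parameter sets and continuity of these families.

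\textbf{Step 1: preliminary shrinking.} After shrinking $\rho_*$, and hence $r$ and $R$, by a uniform factor, we may assume that $\psi_p^{-1}(D(0,R))$ is contained in a spherical ball $B_{\rm sph}(p,\eta)$ for every $p$, with $\eta\ll r_0$. This uses the uniform distortion bounds of the charts.

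\textbf{Step 2: finite partition and choice of the cell.} Fix $\delta>0$ much smaller than $r_0$. Cover $\mathbb P^1$ by finitely many spherical balls of radius $\delta$, and intersect them with the partition $\{P_\alpha\}$. This gives a finite measurable partition $\{Q_j\}$ of $\mathbb P^1$ into cells of diameter less than $2\delta$, each contained in a single $P_\alpha$. The sets
$$
E_{jk}:=E\cap\pi^{-1}(Q_j\times Q_k)
$$
form a finite partition of $E$. Both $\hat\nu$ and $\hat\mu_1\times\hat\mu_2$ are additive, so:
\begin{itemize}
\item some $E_{jk}$ has $\hat\nu(E_{jk})>0$;
\item if $\hat\nu(E)>(\hat\mu_1\times\hat\mu_2)(E)$, some $E_{jk}$ satisfies $\hat\nu(E_{jk})>(\hat\mu_1\times\hat\mu_2)(E_{jk})$, which in particular forces $\hat\nu(E_{jk})>0$.
\end{itemize}
Set $E^\sharp:=E_{jk}$ for such a pair. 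Pick $a\in \overline{Q_j}\cap J(f_1)$ and $b\in\overline{Q_k}\cap J(f_2)$. These are non-empty, since $\nu$-a.e. $x_0$ lies in $J(f_1)$ and $\nu$-a.e. $y_0$ lies in $J(f_2)$. Let $\chi_a,\chi_b$ be fixed centered charts on $B_{\rm sph}(a,r_0)$ and $B_{\rm sph}(b,r_0)$; for instance, take members of the smooth families.

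\textbf{Step 3: choice of radii and neighbourhoods.} On the domain side, choose $r^\sharp$ small enough that
\begin{itemize}
\item $\chi_a^{-1}(D(0,r^\sharp))\subset B_{\rm sph}(a,3\delta)$;
\item $U:=\chi_a^{-1}(D(0,r^\sharp/2))$ contains $\overline{Q_j}$, after choosing $\delta$ small relative to $r^\sharp$ and the distortion of $\chi_a$.
\end{itemize}
Then, for $x_0\in Q_j\subset P_\alpha$, the map $h_{\hat x}=\psi_{\alpha,x_0}\circ\chi_a^{-1}$ is defined on $D(0,r^\sharp)$. By uniform distortion its image lies in $D(0,C\delta)\subset D(0,r)$ once $\delta$ is small. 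Since $\chi_a$ is fixed and $r^\sharp$ must scale with $\delta$, I would order the choices as follows: first fix a scale $s\ll r$, then choose $\delta$ with $C\delta<s$ and $r^\sharp$ comparable to $s$.

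On the target side, set $V:=B_{\rm sph}(b,r_0/2)$. Since $y_0$ is within $2\delta$ of $b$, $k_{\hat y}=\chi_b\circ\psi_{\beta,y_0}^{-1}$ is defined on a fixed neighbourhood of $\overline{D(0,R)}$, and $k_{\hat y}(D(0,R))\subset\chi_b(B_{\rm sph}(b,\eta+2\delta))$. Choose $R^\sharp$ so that this set lies in $D(0,R^\sharp)$ and $\overline{D(0,R^\sharp)}\Subset\chi_b(V)$; this is possible because $\eta+2\delta\ll r_0$.

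\textbf{Step 4: the conclusions.} We take
$$
\mathcal H:=\{\psi_{\alpha,p}\circ\chi_a^{-1}:p\in\overline{Q_j}\},\qquad
\mathcal G:=\{\chi_b\circ\psi_{\beta,p}^{-1}|_{D(0,R)}:p\in\overline{Q_k}\}.
\$$
These are continuous images of compact parameter sets under a smooth family, hence compact in the compact-open topology. This gives (1).

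Items (2) and (3) follow from Proposition \ref{prop:transfer-maps}: $\widetilde\Phi_{z,n}$ takes values in the bounded set $D(0,R^\sharp)$, so Montel's theorem gives normality on $D(0,r^\sharp)$.

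For (4), observe that $h_{\hat x}(q_z)=\psi_{x_0}(x_0)=0$ and $\Phi_{z,n}(0)=0$. Any cluster limit $\widetilde\Psi$ equals $k\circ\Psi\circ h$ along a subsequence in which $h_{\hat x}$ and $k_{\hat y}$ are fixed, and $\Psi'(0)=1$ by Corollary \ref{cor:normal-family}. The chain rule then gives
$$
\widetilde\Psi'(q_z)=k_{\hat y}'(0)\,h_{\hat x}'(q_z).
$$
The quantity $|k'(0)h'(q)|$ depends continuously on the parameters $(x_0,y_0)\in\overline{Q_j}\times\overline{Q_k}$ and never vanishes, since all maps involved are charts. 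Its minimum $c_*$ over this compact set is therefore positive. Non-constancy of cluster limits in (3) follows from the same bound.

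\textbf{Main obstacle.} The bookkeeping of scales is the delicate part, above all arranging that $h_{\hat x}(D(0,r^\sharp))\subset D(0,r)$ while $U$ still contains all the base points $x_0$. The plan handles this by fixing the scale $s$ first and then taking $\delta$ small, as in Step 3.
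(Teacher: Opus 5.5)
Your proposal is correct and follows essentially the paper's route: split $E$ into finitely many pieces on which the moving charts $\psi_{x_0},\psi_{y_0}$ come from single smooth families and stay near fixed base points, keep a piece of positive (respectively excess) $\hat\nu$-mass, and get $\mathcal H$, $\mathcal G$ and $c_*$ from compactness and continuity of the chart families. The paper does this in two stages, first by $P_\alpha\times P_\beta$ and then by a finite cover $U_j\times V_j'$ of $K_{\alpha,\beta}$, while you refine to small cells in one go. The one slip is in Step 3: the requirement $\chi_a^{-1}(D(0,r^\sharp))\subset B_{\rm sph}(a,3\delta)$ contradicts your demand that $\delta$ be small relative to $r^\sharp$ (needed for $\overline{Q_j}\subset U$), so drop it and keep your final ordering $\delta\ll s\sim r^\sharp\ll r$, with distortion constants uniform over the chart families; this still gives $h_{\hat x}(D(0,r^\sharp))\subset D(0,r)$.
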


\begin{proof}
Recall that the centered charts in Section
\ref{subsec:inverse-branches} were chosen from finitely many smooth local
families. More precisely, there are open sets
$W_\alpha\subset\mathbb P^1$, smaller open sets
$W'_\alpha$ with
$\overline{W'_\alpha}\Subset W_\alpha$, and a measurable partition
$
\mathbb P^1=P_1\sqcup\cdots\sqcup P_N
$
such that $P_\alpha\subset W'_\alpha$ and
$\psi_p=\psi_{\alpha,p}$ whenever $p\in P_\alpha$.

For $\alpha,\beta\in\{1,\ldots,N\}$, set
$Q_{\alpha,\beta}
:=
(P_\alpha\times P_\beta)
\cap
\bigl(J(f_1)\times J(f_2)\bigr)$.
These sets form a finite measurable partition of
$J(f_1)\times J(f_2)$. Hence
$$
E
=
\bigsqcup_{\alpha,\beta}
\bigl(E\cap\pi^{-1}(Q_{\alpha,\beta})\bigr).
$$
Since $\hat\nu(E)>0$, there is a pair $(\alpha,\beta)$ such that, setting
$
E_{\alpha,\beta}:=
E\cap\pi^{-1}(Q_{\alpha,\beta})$,
we have $\hat\nu(E_{\alpha,\beta})>0$.
If in addition
$\hat\nu(E)>
(\hat\mu_1\times\hat\mu_2)(E)$,
we choose $(\alpha,\beta)$ so that
$
\hat\nu(E_{\alpha,\beta})>
(\hat\mu_1\times\hat\mu_2)(E_{\alpha,\beta});
$
such a pair exists by the finite partition above.
On the first coordinate of $E_{\alpha,\beta}$, all chosen centered
charts belong to the single smooth family
$p\mapsto\psi_{\alpha,p}$, while on the second coordinate they belong
to the single smooth family
$q\mapsto\psi_{\beta,q}$.
Set
$$
K_{\alpha,\beta}
:=
\bigl(\overline{P_\alpha}\cap J(f_1)\bigr)
\times
\bigl(\overline{P_\beta}\cap J(f_2)\bigr).
$$
Since
$\overline{P_\alpha}\Subset W_\alpha$ and
$\overline{P_\beta}\Subset W_\beta$, the set
$K_{\alpha,\beta}$ is compactly contained in
$W_\alpha\times W_\beta$.

By compactness of $K_{\alpha,\beta}$ and the smooth dependence
of the two fixed chart families, we may choose finitely many points
$(a_j,b_j)\in K_{\alpha,\beta}$ and spherical neighbourhoods
$U_j\ni a_j$, $V_j'\ni b_j$ 
with $U_j\Subset W_\alpha$ and 
$V_j'\Subset V_j\Subset W_\beta$,
such that the sets $U_j\times V_j'$ cover $K_{\alpha,\beta}$ and,
setting
$\chi_{a_j}:=\psi_{\alpha,a_j}$ and 
$\chi_{b_j}:=\psi_{\beta,b_j}$,
the following properties hold.

\smallskip

There exist radii $0<r_j^\sharp<r$ and $R_j^\sharp>0$ such that,
for every $p\in U_j$ and $q\in V_j'$, the transition map
$\psi_{\alpha,p}\circ\chi_{a_j}^{-1}$ is defined on
$D(0,r_j^\sharp)$, while
$\chi_{b_j}\circ\psi_{\beta,q}^{-1}$ is defined on a fixed
neighbourhood of $\overline{D(0,R)}$. Moreover, we have
$$
\chi_{a_j}(U_j)\Subset D(0,r_j^\sharp),
\qquad
D(0,R_j^\sharp)\Subset\chi_{b_j}(V_j),
$$
and
$$
(\psi_{\alpha,p}\circ\chi_{a_j}^{-1})(D(0,r_j^\sharp))
\subset D(0,r),
\qquad
(\chi_{b_j}\circ\psi_{\beta,q}^{-1})(\overline{D(0,R)})
\subset D(0,R_j^\sharp)
$$
for every $p\in U_j$ and $q\in V_j'$. The corresponding families
of transition maps, restricted to the indicated fixed domains, are
relatively compact in the compact-open topology.

\smallskip

Choose a measurable partition
$Q_1,\ldots,Q_s$ of $Q_{\alpha,\beta}$ 
so that 
$Q_j\subset U_j\times V_j'$. 
Since
$$
E_{\alpha,\beta}
=
\bigsqcup_{j=1}^s
\left(E_{\alpha,\beta}\cap\pi^{-1}(Q_j)\right)
$$
and $\hat\nu(E_{\alpha,\beta})>0$, there exists $j_0$ such that
$
\hat\nu
\left(E_{\alpha,\beta}\cap\pi^{-1}(Q_{j_0})\right)>0$.
If
$\hat\nu(E_{\alpha,\beta})>
(\hat\mu_1\times\hat\mu_2)(E_{\alpha,\beta})$,
we choose $j_0$ so that
$$
\hat\nu
\left(E_{\alpha,\beta}\cap\pi^{-1}(Q_{j_0})\right)
>
(\hat\mu_1\times\hat\mu_2)
\left(E_{\alpha,\beta}\cap\pi^{-1}(Q_{j_0})\right).
$$

Set
$
a:=a_{j_0}$,
$b:=b_{j_0}$,
$U:=U_{j_0}$,
$V:=V_{j_0}$,
$r^\sharp:=r_{j_0}^\sharp$,
$R^\sharp:=R_{j_0}^\sharp$,
and
$E^\sharp
:=
E_{\alpha,\beta}\cap\pi^{-1}(Q_{j_0})$.
Then $\hat\nu(E^\sharp)>0$. If
$\hat\nu(E)>
(\hat\mu_1\times\hat\mu_2)(E)$, the
choices above also give \eqref{eq:25-extra}.
By construction,
for every $z=(\hat x,\hat y)\in E^\sharp$,
the map $h_{\hat x}$ is well-defined on
$D(0,r^\sharp)$ and, varying $z\in  E^\sharp$,
the maps $h_{\hat x}$
form a relatively compact family there.
Likewise, the maps $k_{\hat y}$ are well-defined on a fixed neighbourhood
of $\overline{D(0,R)}$, and their restrictions to $D(0,R)$ form a
relatively compact family. Let $\mathcal H$ and $\mathcal G$ be the
respective compact closures. This proves (1).

By construction, we have
$
h_{\hat x}(D(0,r^\sharp))\subset D(0,r)
$
for every $z\in E^\sharp$.
By the choice of $R^\sharp$, we have
$k_{\hat y}(D(0,R))
\subset D(0,R^\sharp)$.
Since Proposition \ref{prop:transfer-maps} gives
$\Phi_{z,n}(D(0,r))\subset D(0,R)$, the map $\widetilde\Phi_{z,n}$
is well-defined on $D(0,r^\sharp)$ and takes values in
$D(0,R^\sharp)$. This proves (2).

The family
$\{\widetilde\Phi_{z,n}\}_{n\geq1}$ takes values in the fixed bounded
disc $D(0,R^\sharp)$, so Montel's theorem gives normality. Set
$q_z:=\chi_a
(x_0)$. Since
$h_{\hat x}(q_z)=0$ and
$\Phi_{z,n}'(0)=1$, we have
$\widetilde\Phi_{z,n}'(q_z)
=
k_{\hat y}'(0)\,
h_{\hat x}'(q_z)$.
The right hand side is independent of $n$ and is non-zero. Hence, if
$\widetilde\Phi_{z,n_j}\to\widetilde\Psi$ locally uniformly, then
\begin{equation}\label{eq:derivative-tilde-psi}
\widetilde\Psi'(q_z)
=
k_{\hat y}'(0)\,
h_{\hat x}'(q_z)
\neq0.
\end{equation}
Thus every cluster limit is non-constant.
Since $\widetilde\Psi$ is a locally uniform limit of maps with
values in $D(0,R^\sharp)$, the maximum principle also gives
$\widetilde\Psi(D(0,r^\sharp))\subset D(0,R^\sharp)$.
This proves (3).

To prove (4), it remains to obtain a lower bound independent of $z$. Since
$\overline U\Subset W_\alpha$ and
$\overline V\Subset W_\beta$, the function
$$(x,y)\mapsto
\left|
(\chi_b\circ\psi_{\beta,y}^{-1})'(0)
(\psi_{\alpha,x}\circ\chi_a^{-1})'(\chi_a(x))
\right|$$
is well-defined, continuous, and strictly positive on the compact set
$\overline U\times\overline V$. It therefore has a positive minimum,
say $c_*>0$.
For $z=(\hat x,\hat y)\in E^\sharp$, the above displayed function evaluated
at $(x_0,y_0)$ is exactly
$|
k_{\hat y}'(0)\,
h_{\hat x}'(q_z)|.
$
Hence, we have
$|
k_{\hat y}'(0)\,
h_{\hat x}'(q_z)|
\geq c_*$.
Combining this with
\eqref{eq:derivative-tilde-psi}
gives
$|\widetilde\Psi'(q_z)|\geq c_*$ for every cluster limit
$\widetilde\Psi$ associated to $z$. This proves (4) and completes the proof.
\end{proof}

\subsection{Cluster germs for the joining}
\label{subsec:cluster-germs}
We fix from now on the reference data
$E^\sharp,a,b, \chi_a, \chi_b, U,V,r^\sharp$ and $R^\sharp$ obtained in
Proposition \ref{prop:reference-charts}. When $\nu\neq\mu_1\times\mu_2$, we make this choice
so that
$\hat\nu(E^\sharp)>
(\hat\mu_1\times\hat\mu_2)(E^\sharp)$.
The cluster families constructed below depend on these auxiliary
reference data, but for simplicity we will suppress this dependence from the notation.

By Proposition \ref{prop:reference-charts}, the maps
$\widetilde\Phi_{z,n}$ are defined on the fixed disc
$D(0,r^\sharp)$ and take values in $D(0,R^\sharp)$.
Set
$$
\mathcal K
:=
\overline{
\{
\widetilde\Phi_{z,n}:
z\in E^\sharp,\ n\geq1
\}
}
\subset
\operatorname{Hol}(D(0,r^\sharp),\mathbb C),
$$
where the closure is taken in the compact-open topology.
By Montel's theorem, $\mathcal K$ is compact, 
and it is metrizable
in the compact-open topology.
For each $z\in E^\sharp$, define the \emph{cluster set}
$$
C(z)
:=
\left\{
\widetilde\Psi\in\mathcal K:
\widetilde\Phi_{z,n_j}
\to
\widetilde\Psi
\text{ locally uniformly for some }n_j\to\infty
\right\}.
$$
The set $C(z)$ is non-empty and compact. Every
$\widetilde\Psi\in C(z)$ is non-constant and maps
$D(0,r^\sharp)$ into $D(0,R^\sharp)$.

We now restrict to recurrent points. Set $S:=\hat F^{-1}$.
Let $E^\sharp_{\rm rec}\subset E^\sharp$ be the recurrent part for
$S$ on $E^\sharp$, let
$\tau \colon E^\sharp_{\rm rec}\to\mathbb N$ be the first return time to
$E^\sharp$, and set
$T(z):=S^{\tau(z)}z=\hat F^{-\tau(z)}z$.
By Poincaré recurrence, we have
$\hat\nu(E^\sharp_{\rm rec})
=
\hat\nu(E^\sharp)>0$.

\begin{defn}\label{def:Cnu}
The \emph{cluster family associated to the joining} is
$$
\mathcal C_\nu
:=
\bigcup_{z\in E^\sharp_{\mathrm{rec}}}
C(z)
\subset\mathcal K.
$$
\end{defn}

The set $\mathcal C_\nu$ need not be closed. Its closure
$\overline{\mathcal C_\nu}\subset\mathcal K$ will play an important
role when $\mathcal C_\nu$ is infinite.

\smallskip

The following definition singles out cluster germs occurring on a
set of positive lifted joining measure. Section
\ref{sec:cluster-gauge} will compare 
the set of admissible germs
with the fiberwise cluster structure.

\begin{defn}\label{def:admissible-germs}
A holomorphic map
$
\widetilde\Psi
\in
\operatorname{Hol}
(D(0,r^\sharp),D(0,R^\sharp))
$
is called \emph{$\nu$-admissible} if
$$
\hat\nu
\bigl(
\{
z\in E^\sharp_{\mathrm{rec}}:
\widetilde\Psi\in C(z)
\}
\bigr)>0.
$$
\end{defn}

The set appearing in Definition
\ref{def:admissible-germs} is measurable. Indeed, for every fixed
$\widetilde\Psi\in\mathcal K$, it is equal to
$$
E^\sharp_{\rm rec}\cap
\bigcap_{m,N\geq1}
\bigcup_{n\geq N}
\left\{
z\in E^\sharp:
d_{\mathcal K}
(\widetilde\Phi_{z,n},\widetilde\Psi)
<
\frac1m
\right\}.
$$
For every fixed $n$, the map
$z\mapsto\widetilde\Phi_{z,n}$ is measurable in the compact-open
topology: its evaluations on a fixed countable dense subset of
$D(0,r^\sharp)$ depend measurably on the finite backward coordinates
$(x_0,\ldots,x_{-n},y_0,\ldots,y_{-n})$.
We denote by $\mathcal A_\nu$ the set of all
$\nu$-admissible germs.
Thus $\mathcal A_\nu\subset\mathcal C_\nu$. 

\smallskip

The following proposition realizes an admissible cluster germ as a
local holomorphic graph carrying positive joining measure.

\begin{prop}\label{prop:realization-admissible-germs}
Take $\widetilde\Psi\in\mathcal A_\nu$ and set
$
E^\sharp_{\widetilde\Psi}
:=
\{
z\in E^\sharp_{\mathrm{rec}}:
\widetilde\Psi\in C(z)
\}.
$
Then
$\varphi_{\widetilde\Psi}
:=\chi_b^{-1}\circ\widetilde\Psi\circ\chi_a$
is holomorphic on $U$. Moreover, for every
$z=(\hat x,\hat y)\in E^\sharp_{\widetilde\Psi}$,
we have
$\varphi_{\widetilde\Psi}(x_0)=y_0$ and $\varphi'_{\widetilde\Psi}(x_0)\neq0.$

Consequently, we have
$\pi(E^\sharp_{\widetilde\Psi})
\subset\Gamma_{\varphi_{\widetilde\Psi}}$
and
$\nu(\Gamma_{\varphi_{\widetilde\Psi}})>0$.
In particular, at every
$(x_0,y_0)\in\pi(E^\sharp_{\widetilde\Psi})$, the map
$\varphi_{\widetilde\Psi}$ is biholomorphic after restriction to
sufficiently small neighbourhoods of $x_0$ and $y_0$.
\end{prop}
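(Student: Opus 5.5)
The plan is to verify each assertion directly from the construction of the conjugated transfer maps, using only Proposition \ref{prop:reference-charts} and the centering of the charts. The key observation is that every transfer map sends the base point to the base point. Indeed, since $g_{i,\hat z,n}(z_0)=z_{-n}$ and the charts are centered, we have $G_{1,\hat x,n}(0)=\psi_{x_{-n}}(x_{-n})=0$, and likewise $G_{2,\hat y,n}(0)=0$. Hence
$$
\Phi_{z,n}(0)=G_{2,\hat y,n}^{-1}\bigl(a_n(z)\cdot 0\bigr)=0 .
$$
From this, the desired relation $\varphi_{\widetilde\Psi}(x_0)=y_0$ follows by passing to the limit.

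\emph{Holomorphy on $U$.} By item (1) of Proposition \ref{prop:reference-charts}, $\chi_a(U)\Subset D(0,r^\sharp)$, so $\widetilde\Psi\circ\chi_a$ is defined and holomorphic on $U$. By item (3), it takes values in $D(0,R^\sharp)$. Since $\overline{D(0,R^\sharp)}\Subset\chi_b(V)$, the inverse chart $\chi_b^{-1}$ is defined there. Hence $\varphi_{\widetilde\Psi}$ is holomorphic on $U$ with values in $V$.

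\emph{The graph relation.} Fix $z=(\hat x,\hat y)\in E^\sharp_{\widetilde\Psi}$ and set $q_z=\chi_a(x_0)$. Then $h_{\hat x}(q_z)=\psi_{x_0}(x_0)=0$, so
$$
\widetilde\Phi_{z,n}(q_z)=k_{\hat y}\bigl(\Phi_{z,n}(0)\bigr)=k_{\hat y}(0)=\chi_b(y_0)
$$
for every $n$. Since $\widetilde\Psi\in C(z)$, there are $n_j\to\infty$ with $\widetilde\Phi_{z,n_j}\to\widetilde\Psi$ locally uniformly. In particular, the values converge at the point $q_z\in D(0,r^\sharp)$. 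Hence $\widetilde\Psi(q_z)=\chi_b(y_0)$, that is, $\varphi_{\widetilde\Psi}(x_0)=y_0$.

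\emph{Non-vanishing derivative.} Item (4) of Proposition \ref{prop:reference-charts} gives $|\widetilde\Psi'(q_z)|\geq c_*>0$. The chain rule, together with the fact that $\chi_a,\chi_b$ are conformal charts with non-vanishing derivatives, gives $\varphi'_{\widetilde\Psi}(x_0)\neq0$. The inverse function theorem then shows that $\varphi_{\widetilde\Psi}$ restricts to a biholomorphism between small neighbourhoods of $x_0$ and $y_0$.

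\emph{Positive measure.} The preceding steps show $\pi(E^\sharp_{\widetilde\Psi})\subset\Gamma_{\varphi_{\widetilde\Psi}}$. The graph $\Gamma_{\varphi_{\widetilde\Psi}}$ is relatively closed in $U\times\mathbb P^1$, hence Borel. Therefore $E^\sharp_{\widetilde\Psi}\subset\pi^{-1}(\Gamma_{\varphi_{\widetilde\Psi}})$, and since $\pi_*\hat\nu=\nu$,
$$
\nu(\Gamma_{\varphi_{\widetilde\Psi}})=\hat\nu\bigl(\pi^{-1}(\Gamma_{\varphi_{\widetilde\Psi}})\bigr)\geq\hat\nu(E^\sharp_{\widetilde\Psi})>0 .
$$
The last inequality is exactly the admissibility of $\widetilde\Psi$; the measurability of $E^\sharp_{\widetilde\Psi}$ was checked after Definition \ref{def:admissible-germs}.

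No step is a serious obstacle. The only point requiring care is the bookkeeping of domains: $q_z$ must lie in $D(0,r^\sharp)$, which holds because $x_0\in U$ and $\chi_a(U)\Subset D(0,r^\sharp)$. The maps $h_{\hat x}$ and $k_{\hat y}$ must also be defined at $q_z$ and at $0$ respectively, which is guaranteed by item (1) of Proposition \ref{prop:reference-charts}.
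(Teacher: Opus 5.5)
Your proposal is correct and follows the paper's own proof. It uses the same three steps: the base-point identity $\widetilde\Phi_{z,n}(q_z)=k_{\hat y}(0)=\chi_b(y_0)$ passed to the limit, the derivative bound from Proposition \ref{prop:reference-charts} (4), and the pushforward estimate $\nu(\Gamma_{\varphi_{\widetilde\Psi}})\geq\hat\nu(E^\sharp_{\widetilde\Psi})>0$, with your explicit checks of holomorphy on $U$ (via $\overline{D(0,R^\sharp)}\Subset\chi_b(V)$) and Borel measurability of the graph being harmless details the paper leaves implicit.
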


\begin{proof}
Fix
$z=(\hat x,\hat y)\in E^\sharp_{\widetilde\Psi}$.
There exists $n_j\to\infty$ such that
$
\widetilde\Phi_{z,n_j}
\to
\widetilde\Psi
$
locally uniformly.
Since
$h_{\hat x}(
\chi_a
(x_0))=0$
and
$\Phi_{z,n_j}(0)=0$, we have
$
\widetilde\Phi_{z,n_j}(
\chi_a(x_0))
=
k_{\hat y}(0)
=
\chi_b(y_0)$.
Passing to the limit gives
$
\varphi_{\widetilde\Psi}(x_0)=y_0$.
Moreover, Proposition \ref{prop:reference-charts} (4) gives
$$
\left|
\widetilde\Psi'(
\chi_a
(x_0))
\right|
=
\left|
k_{\hat y}'(0)\,
h_{\hat x}'(
\chi_a(x_0))
\right|
\geq c_*>0.
$$
Hence
$\varphi'_{\widetilde\Psi}(x_0)\neq0$.

Thus
$\pi(E^\sharp_{\widetilde\Psi})
\subset\Gamma_{\varphi_{\widetilde\Psi}}$.
Since
$E^\sharp_{\widetilde\Psi}
\subset
\pi^{-1}(\Gamma_{\varphi_{\widetilde\Psi}})$
and $\pi_*\hat\nu=\nu$, we have
$$
\nu(\Gamma_{\varphi_{\widetilde\Psi}})
=
\hat\nu\bigl(
\pi^{-1}(\Gamma_{\varphi_{\widetilde\Psi}})
\bigr)
\geq
\hat\nu(E^\sharp_{\widetilde\Psi})
>0.
$$
The final assertion follows from the inverse function theorem.
\end{proof}

\begin{lem}\label{lem:Cnu-finite-admissible}
If $\mathcal C_\nu$ is finite, then
$\mathcal A_\nu$ is finite and non-empty.
\end{lem}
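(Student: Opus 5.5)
Finiteness of $\mathcal A_\nu$ is immediate, since $\mathcal A_\nu\subset\mathcal C_\nu$ by definition. The work is in non-emptiness. The plan is a pigeonhole argument over the finitely many germs: show that the sets $E^\sharp_{\widetilde\Psi}$ cover $E^\sharp_{\rm rec}$ and that each of them is measurable, and conclude that one of them has positive mass.

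Write $\mathcal C_\nu=\{\widetilde\Psi_1,\ldots,\widetilde\Psi_m\}$. For every $z\in E^\sharp_{\rm rec}$ the cluster set $C(z)$ is non-empty. Indeed, $\{\widetilde\Phi_{z,n}\}_{n\geq1}$ is normal by Proposition \ref{prop:reference-charts}(3) and lies in the compact metrizable space $\mathcal K$, so any sequence $n_j\to\infty$ has a convergent subsequence. Since $C(z)\subset\mathcal C_\nu$ by Definition \ref{def:Cnu}, we get
$$
E^\sharp_{\rm rec}=\bigcup_{k=1}^m E^\sharp_{\widetilde\Psi_k},
\qquad
E^\sharp_{\widetilde\Psi_k}:=\{z\in E^\sharp_{\rm rec}:\widetilde\Psi_k\in C(z)\}.
$$
Each $E^\sharp_{\widetilde\Psi_k}$ is measurable by the countable intersection/union description given after Definition \ref{def:admissible-germs}. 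That description uses only the measurability of $z\mapsto\widetilde\Phi_{z,n}$ and the metric $d_{\mathcal K}$, and it holds for every fixed element of $\mathcal K$.

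Subadditivity then gives
$$
0<\hat\nu(E^\sharp)=\hat\nu(E^\sharp_{\rm rec})\leq\sum_{k=1}^m\hat\nu(E^\sharp_{\widetilde\Psi_k}),
$$
where the equality is Poincaré recurrence as recorded before Definition \ref{def:Cnu}. Hence some $\hat\nu(E^\sharp_{\widetilde\Psi_k})>0$. By Definition \ref{def:admissible-germs} this means $\widetilde\Psi_k\in\mathcal A_\nu$, so $\mathcal A_\nu\neq\emptyset$.

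The only point needing care is measurability, and it is already settled in the text. If one worried about the case $m=0$, it cannot occur: $E^\sharp_{\rm rec}$ is non-empty because it has positive measure, and each of its points contributes a non-empty cluster set.
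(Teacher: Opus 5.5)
Your proposal is correct and follows the paper's own proof: you cover $E^\sharp_{\rm rec}$ by the finitely many sets $\{z\in E^\sharp_{\rm rec}:\widetilde\Psi_k\in C(z)\}$ and pigeonhole on positive measure, with finiteness coming from $\mathcal A_\nu\subset\mathcal C_\nu$. Your extra remarks on why $C(z)\neq\emptyset$ and why these sets are measurable are consistent with the text and only fill in details the paper leaves implicit.
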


\begin{proof}
Write
$
\mathcal C_\nu
=
\{
\widetilde\Psi_1,\ldots,\widetilde\Psi_N
\}.
$
Since $C(z)$ is non-empty for every
$z\in E^\sharp_{\mathrm{rec}}$, we have
$$
E^\sharp_{\mathrm{rec}}
=
\bigcup_{j=1}^N
\left\{
z\in E^\sharp_{\mathrm{rec}}:
\widetilde\Psi_j\in C(z)
\right\}.
$$
Since $\hat \nu (E^\sharp_{\mathrm{rec}})>0$,
for
some $j$
we have
$\hat\nu
( \{
z\in E^\sharp_{\mathrm{rec}}:
\widetilde\Psi_j\in C(z)
\})
>0$.
Thus
$\widetilde\Psi_j\in\mathcal A_\nu$.
Since
$\mathcal A_\nu\subset\mathcal C_\nu$, the set
$\mathcal A_\nu$ is finite and non-empty.
\end{proof}

\begin{rmk}\label{rmk-Cnu-finite-product}
If $\nu=\mu_1\times\mu_2$, then the cluster family
$\mathcal C_\nu$ is necessarily infinite, for every fixed choice of
reference data as above. Indeed, if $\mathcal C_\nu$ were finite,
Lemma
\ref{lem:Cnu-finite-admissible}
would give a $\nu$-admissible cluster germ
$\widetilde\Psi$, and Proposition
\ref{prop:realization-admissible-germs} would give
$\nu(\Gamma_{\varphi_{\widetilde\Psi}})>0$.
On the other hand, since $\mu_2$ is non-atomic, Fubini's theorem
gives 
$$
(\mu_1\times\mu_2)(\Gamma_{\varphi_{\widetilde\Psi}})
=
\int_U
\mu_2\bigl(\{\varphi_{\widetilde\Psi}(x)\}\bigr)\,d\mu_1(x)
=0.
$$
This is a contradiction.
\end{rmk}

\section{Rigidity of positive-mass local holomorphic relations}
\label{sec:finite-algebraization}
The main result of this section is the following local-to-global
rigidity theorem.

\begin{thm}\label{thm:Cnu-finite}
Let $\nu$ be an ergodic joining of $f_1$ and $f_2$. Suppose that
there exist non-empty open sets $U,V\subset\mathbb P^1$ and a
biholomorphism $\varphi\colon U\to V$ such that
$\nu(\Gamma_\varphi)>0$. Then there exist $N\geq1$ and an
irreducible algebraic curve
$C\subset\mathbb P^1\times\mathbb P^1$ such that $F^N(C)=C$ and
$\nu(C)>0$. Moreover, exactly one of alternatives \textup{(A1)} and \textup{(A2)}
holds.
\end{thm}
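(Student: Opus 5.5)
We follow the strategy outlined in the introduction: first extract a recurrent local intertwining relation from the charged graph, then globalize it.

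\textbf{Step 1: a recurrent local intertwining relation.} Set $\Gamma:=\Gamma_\varphi$ and let $\hat\Gamma:=\pi^{-1}(\Gamma)\cap(\hat X_{1,\varepsilon}\times\hat X_{2,\varepsilon})$. Intersect $\hat\Gamma$ with a set on which $\rho_{1,\varepsilon},\rho_{2,\varepsilon}\geq\rho_*$, as in Lemma \ref{prop:good-pairs-good-times}. By Poincaré recurrence for $S=\hat F^{-1}$, $\hat\nu$-almost every point of this set returns to it infinitely often.

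Take such a point $z=(\hat x,\hat y)$ and a return time $n$ with $S^nz$ in the set. Then $(x_{-n},y_{-n})\in\Gamma$, that is, $y_{-n}=\varphi(x_{-n})$. Now compare $g_{2,\hat y,n}\circ\varphi$ with $\varphi\circ g_{1,\hat x,n}$ near $x_0$. Both maps send $x_0$ to $y_{-n}$, but a single point does not force them to agree. To get more, use positivity of measure: choose a density point and look at returns landing in a small ball. The conditional measures of $\nu$ on $\Gamma$, pushed by $F^n$, are again carried by $\Gamma$. Since $\mu_1$ has no isolated points and $\nu|_\Gamma$ projects to a non-atomic measure with perfect support, we obtain infinitely many points $x'$ accumulating at $x_{-n}$ where $f_2^n\circ\varphi=\varphi\circ f_1^n$. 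By the identity theorem this gives, on a disc,
\[
f_2^n\circ\varphi=\varphi\circ f_1^n
\]
along the relevant branch.

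Making this rigorous is the first delicate point. A cleaner route may be to pass to the admissible germ: apply Proposition \ref{prop:realization-admissible-germs} to a germ in $\mathcal A_\nu$, and compare $\varphi$ with the limit of transfer maps, $\varphi_{\widetilde\Psi}$.

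Next, using Proposition \ref{prop:inverse branch-package}(4), choose two returns landing in a fixed coordinate disc $D$ around a density point, with contracting images $\alpha_1(D),\alpha_2(D)\Subset D$ that are distinct. This yields two distinct inverse branches $\alpha_1,\alpha_2$ of $f_1^N$ with
\[
f_2^N\circ\varphi\circ\alpha_j=\varphi\quad\text{on } D .
\]
Distinctness holds because $\mu_1$ has positive mass in disjoint small discs, so returns into different subdiscs give different branches. Equalizing the two return times is done by passing to a common iterate or by composing branches. A contracting branch also gives a repelling fixed point of $f_1^N$ in $D$.

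\textbf{Step 2: globalization.} Suppose first that neither map is Lattès. Apply Ji--Xie's generalized Inou theorem \cite[Theorem 2.1]{JiXieLocalRigidityJuliaSets} to the relation from Step 1, which is non-injective on the union of the return domains. It produces an irreducible algebraic curve $C\supset\Gamma_\varphi|_D$ with $F^N(C)=C$. Then $\nu(C)\geq\nu(\Gamma_\varphi\cap (D\times\mathbb P^1))>0$.

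Now suppose one map, say $f_1$, is Lattès. Then $\mu_1$ is absolutely continuous, so by the local graph relation $\mu_2=\varphi_*\mu_1$ on a positive-measure piece. This makes $\mu_2$ absolutely continuous on an open set, and by Zdunik's theorem $f_2$ is Lattès. Lift both maps to affine maps on tori via uniformizations $\Theta_i\colon\mathbb C\to\mathbb P^1$. The local relation lifts to a holomorphic map $L$ intertwining the affine maps $z\mapsto \lambda_1 z+c_1$ and $z\mapsto\lambda_2z+c_2$ along infinitely many contracting branches; this forces $L$ to be affine, $L(z)=sz+t$. The recurrent branches $\alpha_j$ differ by lattice translations, so $s$ sends a finite-index sublattice of $\Lambda_1$ into $\Lambda_2$. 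Hence $L$ descends to an isogeny-type correspondence, and its image in $\mathbb P^1\times\mathbb P^1$ is an algebraic curve, invariant under $F^N$. We expect this Lattès-case lattice argument, together with the rigorous extraction in Step 1, to be the main obstacle.

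\textbf{Step 3: the dichotomy.} Let $C$ be the invariant curve and let $r$ be the minimal period of $C$. Since $\nu(C)>0$ and $\nu$ is ergodic, $\nu$ is carried by $C\cup F(C)\cup\dots\cup F^{r-1}(C)$. The curves $F^i(C)$ are distinct and irreducible, so they meet in finitely many points, which are $\nu$-null because $\nu$ is non-atomic. Hence $\nu=r^{-1}\sum_i\nu_i$ with $\nu_i=r\,\nu|_{F^i(C)}$ and $F_*\nu_i=\nu_{i+1 \bmod r}$. Neither projection of $C$ is constant: the curve contains a piece of $\Gamma_\varphi$, so both projections are dominant.

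If both projections have degree $>1$, we are in (A2). Otherwise one projection has degree one, say $\pi_1|_{F^i(C)}$. Then the curve is the graph of a rational map $A_i$, and invariance under $F^r$ gives $f_2^r\circ A_i=A_i\circ f_1^r$. The marginal condition gives $\nu_i=(\mathrm{id},A_i)_*\mu_1$, and the case where $\pi_2$ has degree one is symmetric. Projection degrees are preserved along the cycle up to the swap, and this yields (A1). Finally, exclusivity: if (A1) holds then $\nu$ is carried by finitely many graph curves. A curve $C'$ as in (A2) is irreducible with both projection degrees $>1$, so it is distinct from each of them. Then $C'$ meets their union in finitely many points, and $\nu(C')=0$.
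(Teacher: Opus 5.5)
\textbf{Verdict.} Your outline matches the paper's: recurrent contracting branches, then Ji--Xie in the non-Latt\`es case, Zdunik plus affine lifts in the Latt\`es case, and finally the cycle-of-curves classification. Step 3 is essentially Proposition~\ref{prop:finite-case-classification}. One small point there: you neither need nor have justified that projection degrees are preserved along the cycle, because any $F^i(C)$ with both degrees $>1$ already gives (A2). There is, however, a genuine gap at the heart of the Latt\`es case, and Step 1 does not work as written.

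\textbf{The Latt\`es case.} From $H(z)=sz+t$ and two returning branches (or any uncontrolled number of them), you get only finitely many relations $s_0(\lambda_\alpha-\lambda_\beta)\in\Lambda_2$, with $s_0$ equal to $s$ up to roots of unity. Nothing in your argument forces these differences to span a rank-two subgroup. Yet rank two of $\{\lambda\in\Lambda_1: s_0\lambda\in\Lambda_2\}$ is exactly what is needed for the line $\{(z,sz+t)\}$ to descend to a closed curve in $E_1\times E_2$. Without it, $\Gamma_\varphi$ lies in no algebraic curve.

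The missing idea is a count. For arbitrarily large $N$, the paper produces at least $\delta d_1^N=\delta|a_1|^{2N}$ distinct branches of the \emph{same} iterate $f_1^N$ with $f_2^N\circ\varphi\circ\alpha=\varphi$ on $D$ (Lemma~\ref{lem:many-contracting-returns}). Their lattice parameters satisfy $|\lambda_\alpha|\le C|a_1|^N$ (Lemma~\ref{lem:lattes-return-parameter-bound}). After pigeonholing the rotational parts, all $\lambda_\alpha$ lie in one coset of $\{\lambda: c_0\lambda\in\Lambda_2\}$. A coset of a subgroup of rank $\le1$ has only $O(|a_1|^N)$ points in a disc of radius $C|a_1|^N$, which contradicts the count $\delta|a_1|^{2N}$. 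You flagged this step as the main obstacle, but the proposal supplies no mechanism for the finite-index conclusion.

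\textbf{Step 1.} The same lemma repairs Step 1, whose current justification fails. A pointwise Poincar\'e return at time $n$ gives the intertwining relation at a single point. The claim that the conditional measures on $\Gamma$, pushed forward by $F^n$, are again carried by $\Gamma$ is false in general.

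What you need is a positive-measure set of returns at one fixed time. The paper proceeds as follows:
\begin{enumerate}
\item Take a compact $K$ with uniform inverse-branch domains and uniform contraction, obtained via Egorov.
\item Use ergodicity to get $\hat\nu(K\cap\hat F^{-N}K)\ge c$.
\item Count preimages using $f_1^*\mu_1=d_1\mu_1$.
\item Partition by branches to get $A_N$ with $\mu_1(A_N)>0$ and a fixed branch $\alpha$ whose relation holds on the uncountable set $A_N$, hence on $D$ by the identity theorem.
\end{enumerate}

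Your fallback via $\mathcal A_\nu$ goes in the wrong direction. Proposition~\ref{prop:realization-admissible-germs} turns admissible germs into charged graphs, and no converse is available here.

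Your equalization of return times by composing branches is fine: $\alpha_1^{N_2}$ and $\alpha_2^{N_1}$ have distinct fixed points.

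\textbf{Latt\`es propagation.} The relation $\varphi_*\alpha\ll m$ only shows that $\mu_2$ has a non-zero absolutely continuous part. It does not show that $\mu_2$ is absolutely continuous on an open set. You need to observe that this part is $f_2$-invariant and then use ergodicity to get $\mu_2\ll m$ before invoking Zdunik.
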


The proof is organized as follows. In Section
\ref{subsec:finite-recurrent-returns}, we use recurrence and
contraction of inverse branches to obtain many contracting local
return branches from the charged graph. When neither $f_1$ nor
$f_2$ is Latt\`es (i.e.,
a finite quotient of an affine
endomorphism of an elliptic curve), the generalized Inou theorem
\cite[Theorem 2.1]{JiXieLocalRigidityJuliaSets} globalizes the
resulting local relation. In Section \ref{subsec:finite-lattes}, we
treat the remaining case: if one map is Latt\`es, then so is the
other, and affine uniformization together with a lattice-counting
argument algebraizes the charged local graph. Finally, Section
\ref{subsec:finite-classification} classifies the resulting
invariant curve and completes the proof.

For the rest of this section, we fix non-empty open sets
$U,V\subset\mathbb P^1$ and a biholomorphism
$\varphi\colon U\to V$ such that $\nu(\Gamma_\varphi)>0$. After restricting around a point in the support of
$\nu|_{\Gamma_\varphi}$, we may assume, keeping the same notation,
that $U$ is a connected coordinate disc, $V=\varphi(U)$, and
$\varphi\colon U\to V$ is biholomorphic with
$\nu(\Gamma_\varphi)>0$.

\subsection{Recurrent local relations and the non-Latt\`es case}
\label{subsec:finite-recurrent-returns}
The main result of this subsection is the following proposition.

\begin{prop}\label{prop:finite-case-algebraization}

Assume that neither $f_1$ nor $f_2$ is a Latt\`es map. Then there
exist $N\geq1$ and an irreducible algebraic curve
$C\subset\mathbb P^1\times\mathbb P^1$ such that
$(f_1^N\times f_2^N)(C)=C$ and $\nu(C)>0$.
\end{prop}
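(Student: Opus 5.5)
Starting from the charged graph $\Gamma_\varphi$, I will first extract a recurrent local intertwining relation, then feed it into the generalized Inou theorem of Ji--Xie \cite[Theorem 2.1]{JiXieLocalRigidityJuliaSets}.

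\textbf{Step 1: a recurrent point on the graph.} Set $\hat G:=\pi^{-1}(\Gamma_\varphi)\subset\hat X_{1,\varepsilon}\times\hat X_{2,\varepsilon}$, which has positive $\hat\nu$-measure. The first observation is that the graph is backward-coherent along typical inverse orbits: for $\hat\nu$-a.e.\ $z=(\hat x,\hat y)\in\hat G$ and every $n$ with $\hat F^{-n}z\in\hat G$, we want
\[
\varphi\circ g_{1,\hat x,n}=g_{2,\hat y,n}\circ\varphi
\]
near $x_0$. Equivalently, $f_2^n\circ\varphi\circ g_{1,\hat x,n}=\varphi$ near $x_0$. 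Proving this identity directly is not automatic. I would instead work on a shrunken disc $D\Subset U$ around a point $x_0$ of density of $\pi_1(\pi(\hat G))$ with respect to $\mu_1$. Using Poincaré recurrence for $S=\hat F^{-1}$ on $\hat G\cap\pi^{-1}(D\times\varphi(D))$ together with Proposition \ref{prop:inverse branch-package}(4), for large return times $n$ the branch $\alpha:=g_{1,\hat x,n}$ maps a fixed disc $D'$ around $x_0$ into $D$, with tiny image.

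The two local maps $\varphi$ and $f_2^n\circ\varphi\circ\alpha$ then agree at the point $x_0$: indeed $\varphi(x_{-n})=y_{-n}$ because $\hat F^{-n}z\in\hat G$, so $f_2^n(\varphi(x_{-n}))=y_0=\varphi(x_0)$. To upgrade pointwise agreement to an identity of germs, I would use that the set of such $x_0$ charges $\mu_1$, which is non-atomic and supported on a perfect set. The plan is to show the two holomorphic maps agree on a $\mu_1$-positive, hence infinite, set with an accumulation point in $D'$. This step needs care: one must show that for many nearby points $x'$ on the graph the same branch and the same time $n$ work. I would obtain this by a Lusin/density argument that fixes the return time $n$ on a positive-measure subset and uses the continuity of the branch in the finite backward coordinates.

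\textbf{Step 2: two distinct contracting return branches.} Repeating Step 1, I will produce two return branches $\alpha_1\neq\alpha_2\colon D\to D$ of the same iterate $f_1^N$, both with relatively compact image, satisfying
\[
f_2^N\circ\varphi\circ\alpha_j=\varphi\quad\text{on }D.
\]
A common $N$ can be arranged by composing returns; for instance, use $\alpha_1\circ\alpha_2$ and $\alpha_2\circ\alpha_1$, or take different inverse orbits through the positive-measure set. Distinctness follows because $\nu$ conditioned on the graph is non-atomic, so two inverse orbits with $x_{-N}\neq x'_{-N}$ can be found. Moreover, a contracting self-map $\alpha_1$ of $D$ has an attracting fixed point, which is a repelling fixed point of $f_1^N$ lying in $J(f_1)$.

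\textbf{Step 3: the Ji--Xie theorem.} The data above — $f_1^N$ non-injective on $\alpha_1(D)\cup\alpha_2(D)$, together with the holomorphic map $\varphi$ semiconjugating these branches to $f_2^N$ locally, with neither map Lattès — is exactly the hypothesis of \cite[Theorem 2.1]{JiXieLocalRigidityJuliaSets}. That theorem yields an irreducible algebraic curve $C$ containing $\Gamma_\varphi|_D$ with $(f_1^N\times f_2^N)(C)=C$, possibly after replacing $N$ by a multiple. Then $\nu(C)\ge\nu(\Gamma_{\varphi|_D})>0$.

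\textbf{Main obstacle.} The hard part is Step 1: upgrading the measure-theoretic statement "$\nu$-typical inverse orbits return to the graph" to an exact holomorphic identity of germs on a fixed disc. I would first try to deduce it from identity of two holomorphic maps on a set that accumulates, relying on the non-atomicity of the conditional measures of $\hat\nu$ along the fibres of fixed finite backward coordinates.
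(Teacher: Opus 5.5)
Your architecture is the paper's: contracting return branches $\alpha\colon D\to D$ with $f_2^N\circ\varphi\circ\alpha=\varphi$, then two distinct ones making $f_1^N$ non-injective on $W=\alpha_1(D)\cup\alpha_2(D)$, a repelling fixed point via Schwarz--Pick, and finally \cite[Theorem 2.1]{JiXieLocalRigidityJuliaSets}. Your Step 1 ``main obstacle'' is not where the difficulty lies.

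Fix the return time $n$, and make the branches defined on one common disc $D$ for all points of a positive-measure set. You need this uniformity anyway, since a disc $D'$ depending on $x_0$ does not let you compose branches; the paper obtains it from a compact set $K$ via Egorov. Then the branch along $\hat x$ is one of at most $d_1^n$ branches on $D$. A finite partition gives a positive-measure set on which it is a single $\alpha$. The identity $f_2^n\circ\varphi\circ\alpha=\varphi$ on $D$ then follows because the two maps agree on a set of positive $\mu_1$-measure, which is uncountable since $\mu_1$ is non-atomic. This is how the paper does it; no conditional measures along backward fibres are needed.

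The genuine gap is in Step 2, in the distinctness of the two branches. This is exactly the non-injectivity hypothesis needed for Ji--Xie.

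\emph{Non-atomicity does not give a second branch.} Two inverse orbits with $x_{-N}\neq x'_{-N}$ say nothing about branches: if $x_0\neq x'_0$, then $\alpha(x_0)\neq\alpha(x'_0)$ for one and the same branch $\alpha$.

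\emph{Composition does not force distinctness.} $\alpha_1\circ\alpha_2$ and $\alpha_2\circ\alpha_1$ coincide whenever $\alpha_1,\alpha_2$ commute. For instance, the second return you extract could be $\alpha_1^2$, coming from a point that returns twice along the same branch, and nothing in your construction excludes this.

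\emph{A fix within your route.} Separate the landing regions. Choose disjoint subdiscs $D_1,D_2\subset D^-$, each charged by the $x_0$-marginal of $\hat\nu|_K$; this is possible since $\mu_1$ is non-atomic. Take Poincar\'e returns of $K\cap\{x_0\in D_j\}$ to itself at times $n_j$ so large that $\operatorname{diam}\alpha_j(D)<\tfrac12\operatorname{dist}(D_1,D_2)$. Then $\alpha_1(D)\cap\alpha_2(D)=\emptyset$. Hence $\alpha_1\circ\alpha_2$ and $\alpha_2\circ\alpha_1$ are distinct branches of $f_1^{n_1+n_2}$, their images lying in disjoint sets, and both satisfy the intertwining relation.

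\emph{The paper's route.} It uses the mean ergodic theorem to get $\hat\nu(K\cap\hat F^{-N}K)\geq c$ for infinitely many $N$. It then uses the balance property $f_1^*\mu_1=d_1\mu_1$. The count $m_N$ of $N$-th preimages lying in the return set $E_N$ satisfies $\int m_N\,d\mu_1=d_1^N\mu_1(E_N)\geq cd_1^N$. So a positive-measure set of points has at least $(c/2)d_1^N$ such preimages. A finite partition of a critical-value-free disc then yields at least $\delta d_1^N$ distinct branches of the same $f_1^N$ over a common positive-measure set (Lemma~\ref{lem:many-contracting-returns}). This is more than Ji--Xie needs, but the exponential count is reused in the Latt\`es case (Lemma~\ref{lem:lattes-original-slope}).
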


The following lemmas use the standing charged-graph hypothesis, but
do not require either map to be non-Lattès. The non-Lattès assumption
is used only in the proof of Proposition
\ref{prop:finite-case-algebraization} at the end of the subsection.

\begin{lem}\label{lem:uniform-charged-inverse-box}
There exist coordinate discs
$
D^-\Subset D\Subset U
$
and a compact set
$$
K\subset
\{z=(\hat x,\hat y):x_0\in D^-,
\ y_0=\varphi(x_0)\}
$$
of positive $\hat\nu$-measure 
such that for every sufficiently large $n$ and every
$z=(\hat x,\hat y)\in K$, the inverse branch of $f_1^n$
determined by the backward history $\hat x$ is defined on $D$.
Moreover, the diameters of the corresponding inverse-branch images
of $D$ tend to zero uniformly on $K$ as $n\to\infty$.
\end{lem}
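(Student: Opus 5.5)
We will derive the lemma from Proposition \ref{prop:inverse branch-package} together with a measure-theoretic exhaustion argument, in the same spirit as the proof of Lemma \ref{prop:good-pairs-good-times}. Fix $\varepsilon>0$. The set $\hat\Gamma:=\pi^{-1}(\Gamma_\varphi)\cap(\hat X_{1,\varepsilon}\times\hat X_{2,\varepsilon})$ has $\hat\nu(\hat\Gamma)=\nu(\Gamma_\varphi)>0$, because the marginals of $\hat\nu$ are $\hat\mu_1,\hat\mu_2$ and $\hat\mu_i(\hat X_{i,\varepsilon})=1$. For $M\ge1$ let $\hat\Gamma_M:=\{z=(\hat x,\hat y)\in\hat\Gamma:\rho_{1,\varepsilon}(\hat x)\ge 1/M\}$. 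These sets increase to $\hat\Gamma$, so we may fix $M$ with $\hat\nu(\hat\Gamma_M)>0$. For $z\in\hat\Gamma_M$ and every $n\geq1$, the branch $g_{1,\hat x,n}$ is then defined on $B_{\rm sph}(x_0,1/M)$.

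Next we localize. Cover $U$ by countably many coordinate discs $D^-$ of spherical radius less than $1/(4M)$, each with concentric disc $D$ of twice the radius, where $\overline D\subset U$ and $D\Subset U$. Since $\pi_1(\pi(\hat\Gamma_M))\subset U$, some such $D^-$ satisfies $\hat\nu(\hat\Gamma_M\cap\{x_0\in D^-\})>0$. For $x_0\in D^-$ we have $D\subset B_{\rm sph}(x_0,1/M)$, since the radius of $D$ plus the distance from the centre is at most $3/(4M)$. Hence $g_{1,\hat x,n}$ is defined on $D$ for every $n\ge1$; in particular this holds for all sufficiently large $n$. The condition $y_0=\varphi(x_0)$ holds automatically on $\hat\Gamma$.

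It remains to obtain uniform shrinking and compactness. For $z\in\hat\Gamma_M$ with $x_0\in D^-$, the image $g_{1,\hat x,n}(D)$ is contained in $g_{1,\hat x,n}(B_{\rm sph}(x_0,\lambda/M))$ with $\lambda=3/4$. By Proposition \ref{prop:inverse branch-package}(3),(4), after comparing spherical and chart metrics through the uniform chart distortion, the diameter $\delta_n(z)$ of this image tends to $0$ pointwise. Each $\delta_n$ is measurable, since it depends only on finitely many backward coordinates. Egorov's theorem then yields a subset of positive measure on which $\sup_{m\ge n}\delta_m\to0$ uniformly. Finally, $\hat\nu$ is a Borel probability measure on a Polish space ($\hat X_1\times\hat X_2$ sits in a countable product of spheres), so it is inner regular. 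We may therefore take a compact $K$ of positive measure inside this set. All the required properties pass to subsets, which proves the lemma.

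The main obstacle is the Egorov step: one must check measurability of $\delta_n$ in $z$ and make sure that uniformity, and not just pointwise convergence, is obtained. A secondary subtlety is that compactness of $K$ does not need closedness of $\hat X_{i,\varepsilon}$; inner regularity lets us choose $K$ inside the measurable good set directly.
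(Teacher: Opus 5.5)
Your proposal is correct and follows essentially the same route as the paper. Both arguments restrict to the part of $\pi^{-1}(\Gamma_\varphi)$ where $\rho_{1,\varepsilon}$ is bounded below, localize to small discs $D^-\Subset D$ on which all the branches are defined, and then apply Proposition \ref{prop:inverse branch-package}(3)--(4), Egorov, and inner regularity. The only difference is in the localization step: the paper picks a point in the support of the projected measure, while you use a countable cover and subadditivity, which is equivalent. Your remarks on the measurability of $\delta_n$ and on the regularity of $\hat\nu$ on $\hat X_1\times\hat X_2$ fill in details the paper leaves implicit.
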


\begin{proof}
Set $\hat\Gamma_\varphi:=\pi^{-1}(\Gamma_\varphi)$.
Intersect $\hat\Gamma_\varphi$ with the full-measure
inverse-branch set from Proposition
\ref{prop:inverse branch-package}. Since
$\rho_{1,\varepsilon}(\hat x)>0$ almost everywhere, there exists
$\rho>0$ such that
$S:=
\{(\hat x,\hat y)\in\hat\Gamma_\varphi:
\rho_{1,\varepsilon}(\hat x)\geq\rho\}$
has positive $\hat\nu$-measure.
Choose $x$ in the support of the pushforward of
$\hat\nu|_S$ under $(\hat x,\hat y)\mapsto x_0$.
Choose coordinate discs $D^-\Subset D\Subset U$ containing $x$
and a constant $0<\lambda<1$ so that
$D\subset B_{\rm sph}(x_0,\lambda\rho)$ for every $x_0\in D^-$.
Since $x$ belongs to the support of the pushforward of
$\hat\nu|_S$, the set
$S^-:=\{z\in S:x_0\in D^-\}$ has positive
$\hat\nu$-measure.

For every $z=(\hat x,\hat y)\in S^-$ and every $n\geq1$, the
inverse branch of $f_1^n$ determined by $\hat x$ is defined on $D$.
By Proposition \ref{prop:inverse branch-package} (3)--(4), together
with the uniform distortion bounds for the chosen charts, the
diameters of the corresponding images of $D$ tend to zero for every
$z\in S^-$. Egorov's theorem gives a positive-measure subset on
which this convergence is uniform, and regularity of $\hat\nu$
allows us to choose a compact subset $K$ of positive measure.
By construction,
$K$ satisfies
the required properties.
\end{proof}

\begin{lem}\label{lem:many-contracting-returns}
There exist a constant $\delta>0$ and arbitrarily large integers
$N$ for which there are a measurable set $A_N\subset D^-$ with
$\mu_1(A_N)>0$ and a collection $\mathscr B_N$ of at least
$\delta d_1^N$ distinct inverse branches
$\alpha\colon D\to D$ of $f_1^N$, where $d_1:=\deg f_1$, such that $\alpha(D)\Subset D$ and
$f_2^N\circ\varphi\circ\alpha=\varphi$ on $D$ for every
$\alpha\in\mathscr B_N$.
\end{lem}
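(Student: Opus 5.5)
We use the compact set $K$ and the discs $D^-\Subset D$ from Lemma \ref{lem:uniform-charged-inverse-box}, and combine Poincaré recurrence for $\hat F^{-1}$ with the uniform contraction of inverse branches on $K$.

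\emph{Step 1: the intertwining relation.} For $z=(\hat x,\hat y)\in K$ we have $y_0=\varphi(x_0)$. Suppose that for some $N$ the point $\hat F^{-N}z=(\hat f_1^{-N}\hat x,\hat f_2^{-N}\hat y)$ again lies in $K$, so that $x_{-N}\in D^-$ and $y_{-N}=\varphi(x_{-N})$. Let $\alpha$ be the inverse branch of $f_1^N$ along $\hat x$, which is defined on $D$. For $N$ large, uniform contraction gives $\operatorname{diam}\alpha(D)$ smaller than $\operatorname{dist}(D^-,\partial D)$. Since $\alpha(x_0)=x_{-N}\in D^-$, it follows that $\alpha(D)\Subset D$.

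The maps $f_2^N\circ\varphi\circ\alpha$ and $\varphi$ are both holomorphic on $D$, and they agree at $x_0$: indeed $f_2^N(\varphi(x_{-N}))=f_2^N(y_{-N})=y_0=\varphi(x_0)$. This single coincidence is not enough to conclude equality on $D$. The plan is therefore to show that, for a positive-measure set of base points, the relation holds on a set with an accumulation point.

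\emph{Step 2: upgrading to an identity.} Fix $N$ and a branch $\alpha$. Let $K_\alpha\subset K$ be the set of $z\in K$ with $\hat F^{-N}z\in K$ whose $x$-backward history realizes the branch $\alpha$ at time $N$. For every $z\in K_\alpha$, the point $x_0$ satisfies $f_2^N\varphi\alpha(x_0)=\varphi(x_0)$. Since $\mu_1$ is non-atomic, if $\hat\nu(K_\alpha)>0$ then $\pi_1(K_\alpha)$ is uncountable in $D$ and hence has an accumulation point. The identity theorem then gives $f_2^N\circ\varphi\circ\alpha=\varphi$ on the connected disc $D$. We define $\mathscr B_N$ to be the set of branches $\alpha$ with $\hat\nu(K_\alpha)>0$, and $A_N$ to be the union of the sets $\pi_1(K_\alpha)$ over $\alpha\in\mathscr B_N$. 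This gives $\mu_1(A_N)\ge\nu(\pi(\cdots))>0$.

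\emph{Step 3: counting branches (main obstacle).} We need $\#\mathscr B_N\ge\delta d_1^N$. By mixing of $\hat\nu$ (or, in general, by the ergodic theorem applied to $\hat F^{-1}$), there are arbitrarily large $N$ with $\hat\nu(K\cap\hat F^{N}K)\ge c:=\hat\nu(K)^2/2$. We decompose $K\cap\hat F^NK$ according to the branch $\alpha$; the branches whose piece is null contribute nothing. The key estimate bounds the mass of a single piece.

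The set $\pi(\hat F^{-N}K_\alpha)$ lies over $\alpha(D)$ in the first coordinate. By $\hat f_1$-invariance, the $\hat\mu_1$-mass of backward histories passing through a fixed branch $\alpha$ of $f_1^N$ over $D$ is $\mu_1(\alpha(D))$. We then use the equidistribution and Jacobian property of $\mu_1$: the measure $\mu_1$ has constant Jacobian $d_1$, so $\mu_1(\alpha(D))=d_1^{-N}\mu_1(D)$ for injective branches. Hence
\[
\hat\nu(K_\alpha)\le\mu_1(\alpha(D))=d_1^{-N}\mu_1(D)\le d_1^{-N}.
\]
Summing over $\alpha\in\mathscr B_N$ gives $c\le\#\mathscr B_N\, d_1^{-N}$, so we may take $\delta:=c$.

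Distinct histories can yield the same branch, which is harmless for this count. We only need distinctness of branches, and this holds because $\mathscr B_N$ is by construction a set of branches. The delicate points are justifying the constant-Jacobian bound for branches defined on $D$ and arranging return times that are simultaneously large; we handle the latter by taking $N$ along the subsequence supplied by the recurrence estimate.
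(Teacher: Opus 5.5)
Your argument is correct. It shares the paper's skeleton: recurrence from the mean ergodic theorem, uniform contraction to get $\alpha(D)\Subset D$, the identity $f_1^*\mu_1=d_1\mu_1$ for counting, and non-atomicity of $\mu_1$ for the identity theorem. Where you differ is in how the branches are counted.

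The paper works downstairs. It sets $E_N=\{x_0:z\in K\cap\hat F^{-N}K\}$ and bounds $\int m_N\,d\mu_1\geq c\,d_1^N$ for the preimage count $m_N(x)=\#(E_N\cap f_1^{-N}(x))$. It then passes to a set $X_N$ of points having at least $(c/2)d_1^N$ preimages in $E_N$. From there it must restrict to a disc $Q$ free of critical values of $f_1^N$, partition according to which branches land in $E_N$, and finally extend those branches from $Q$ to $D$ using the histories in $K$.

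You instead split the return set $K\cap\hat F^NK$ directly by the inverse branch on $D$ that each history already carries. That branch is defined on $D$ because $z\in K$, and it is determined by $x_{-N}=\alpha(x_0)$, so there are at most $d_1^N$ pieces $K_\alpha$. You then bound each piece by $\hat\nu(K_\alpha)\leq\mu_1(\alpha(D))=d_1^{-N}\mu_1(D)$. The constant-Jacobian identity you flag as delicate is immediate: apply $(f_1^N)^*\mu_1=d_1^N\mu_1$ to $\mathbf 1_{\alpha(D)}$, noting that $f_1^N$ maps $\alpha(D)$ bijectively onto $D$ and $\alpha(D)$ contains no critical points.

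This route avoids the disc $Q$, the partition and the branch-extension step, and gives $\delta=c$ instead of $c/2$. What it gives up is the paper's single set $A_N$ on which all branches of $\mathscr B_N$ return simultaneously. That loss is harmless: the identity theorem only needs, for each branch, one positive-measure set of base points. Your $\pi_1(K_\alpha)$ is such a set; it is uncountable, so it has an accumulation point in $\overline{D^-}\subset D$. The statement makes no further use of $A_N$.

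One small correction: mixing of $\hat\nu$ is not available, since $\nu$ is only assumed ergodic. The recurrence bound must come from your parenthetical route: the mean ergodic theorem gives $\frac1M\sum_{N<M}\hat\nu(K\cap\hat F^NK)\to\hat\nu(K)^2$, exactly as in the paper.
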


\begin{proof}
Take $D^-\Subset D$ and $K$ from Lemma
\ref{lem:uniform-charged-inverse-box}. Since $\hat F$ is ergodic,
the mean ergodic theorem gives $c>0$ and arbitrarily large $N$ such
that $\hat\nu(K\cap\hat F^{-N}K)\geq c$.
Fix one such $N$, sufficiently large that the uniform contraction in
Lemma \ref{lem:uniform-charged-inverse-box} applies,
and set $B_N:=K\cap\hat F^{-N}K$ and
$E_N:=\{x_0:z=(\hat x,\hat y)\in B_N\}$.
Since the distribution of $x_0$ under $\hat\nu$ is $\mu_1$,
we have $\mu_1(E_N)\geq c$.
For every $u\in E_N$,
choosing $z=(\hat x,\hat y)\in B_N$ with $x_0=u$
gives
$u,f_1^N(u)\in D^-$ and
$f_2^N(\varphi(u))=\varphi(f_1^N(u))$.

For $\mu_1$-almost every $x$, set
$m_N(x):=\#(E_N\cap(f_1^N)^{-1}(x))$, with preimages counted with
multiplicity. Since $f_1^*\mu_1=d_1\mu_1$,
we have
$\int m_N\,d\mu_1=d_1^N\mu_1(E_N)\ge cd_1^N$. As
$0\le m_N\le d_1^N$, the set
$X_N:=\{x:m_N(x)\ge(c/2)d_1^N\}$ has positive
$\mu_1$-measure. Moreover, $X_N\subset D^-$ up to a
$\mu_1$-null set, as $f_1^N(E_N)\subset D^-$.

Remove the critical values of $f_1^N$ from $X_N$. Choose a simply
connected coordinate disc $Q\Subset D^-$, disjoint from these
critical values, such that $\mu_1(X_N\cap Q)>0$. The inverse branches
of $f_1^N$ on $Q$ form a finite collection. Partition $X_N\cap Q$
according to the subcollection of branches $\alpha$ for which
$\alpha(x)\in E_N$. On one element $A_N$ of this finite measurable
partition, we have $\mu_1(A_N)>0$ and a fixed collection
$\mathscr B_N$ of at least $(c/2)d_1^N$ distinct inverse branches
such that $\alpha(A_N)\subset E_N$ for every
$\alpha\in\mathscr B_N$.

For $x\in A_N$ and $\alpha\in\mathscr B_N$, 
choose $z=(\hat x,\hat y)\in B_N$ with $x_0=\alpha(x)$.
Then $\hat F^N(z)\in K$, and its first
backward history determines an inverse branch which agrees with
$\alpha$ near $x$. By the uniform inverse-branch property in
Lemma \ref{lem:uniform-charged-inverse-box},
this branch is defined on the fixed
disc $D$. By uniqueness of analytic continuation, it gives an
extension of $\alpha$ from $Q$ to $D$, which we continue to denote
by $\alpha$. 
By the choice of $N$,
the uniform contraction
in Lemma \ref{lem:uniform-charged-inverse-box} makes the diameter of
each $\alpha(D)$ smaller than
$\rho:=\operatorname{dist}(\overline{D^-},\partial D)>0$. Since
$\alpha(A_N)\subset E_N\subset D^-$, we obtain
$\alpha(D)\Subset D$ for every $\alpha\in\mathscr B_N$. In particular,
$f_2^N\circ\varphi\circ\alpha$ is holomorphic on $D$.
For $x\in A_N$, we have $\alpha(x)\in E_N$, and hence
$f_2^N(\varphi(\alpha(x)))=\varphi(x)$. The holomorphic maps
$f_2^N\circ\varphi\circ\alpha$ and $\varphi$ therefore agree on the
positive-$\mu_1$-measure set $A_N$. If they were not identical,
their coincidence set would be discrete and hence $\mu_1$-null,
since $\mu_1$ is non-atomic. Thus
$f_2^N\circ\varphi\circ\alpha=\varphi$ on $D$.
Taking $\delta:=c/2$ proves the lemma.
\end{proof}

\begin{lem}\label{lem:charged-graph-return-configuration}
There exist an integer $N\geq1$, a coordinate disc $D\Subset U$,
and an open set $W\Subset D$ such that
$f_1^N\colon W\to D$ is non-injective,
$\varphi\circ f_1^N=f_2^N\circ\varphi$
on $W$,
and $f_1^N|_W$ has a repelling fixed point.
\end{lem}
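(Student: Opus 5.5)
We derive the statement directly from Lemma \ref{lem:many-contracting-returns}. Fix $\delta>0$ as given there, and choose $N$ among the arbitrarily large admissible integers so large that $\delta d_1^N\geq 2$. Lemma \ref{lem:many-contracting-returns} then provides a coordinate disc $D\Subset U$ and at least two distinct inverse branches $\alpha_1,\alpha_2\colon D\to D$ of $f_1^N$ with $\alpha_j(D)\Subset D$ and $f_2^N\circ\varphi\circ\alpha_j=\varphi$ on $D$, for $j=1,2$.

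Set $W:=\alpha_1(D)\cup\alpha_2(D)$. This set is open, since each $\alpha_j$ is a non-constant holomorphic map, hence open. It satisfies $W\Subset D$, since each $\alpha_j(D)\Subset D$. On $\alpha_j(D)$ we have $f_1^N\circ\alpha_j=\operatorname{id}_D$, so $f_1^N$ maps $W$ onto $D$.

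\textbf{Non-injectivity.} Pick any $x\in D$. If $\alpha_1(x)=\alpha_2(x)$, then the two inverse branches agree at $x$. Two branches of $f_1^N$ that agree at a point which is not a critical value must coincide near that point, and hence on the connected disc $D$; this contradicts their distinctness. Since the critical values of $f_1^N$ form a finite set, we may choose $x\in D$ outside this set, and then $\alpha_1(x)\neq\alpha_2(x)$. Both points lie in $W$ and both map to $x$ under $f_1^N$, so $f_1^N|_W$ is non-injective.

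\textbf{Conjugacy relation.} Take $w\in W$, say $w=\alpha_j(u)$ with $u\in D$. Then $f_1^N(w)=u$, and the identity $f_2^N\circ\varphi\circ\alpha_j=\varphi$ gives
$$
f_2^N(\varphi(w))=\varphi(u)=\varphi(f_1^N(w)).
$$
Here $w\in D\subset U$, so $\varphi(w)$ is defined. This proves $\varphi\circ f_1^N=f_2^N\circ\varphi$ on $W$.

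\textbf{Repelling fixed point.} This is the only step that needs care. The map $\alpha_1\colon D\to D$ is holomorphic with $\alpha_1(D)\Subset D$ on a disc, so it is a strict contraction of the hyperbolic metric of $D$ with a uniform contraction factor on $\alpha_1(D)$. By the Schwarz--Pick lemma (equivalently, the Denjoy--Wolff theorem, or the Earle--Hamilton theorem), $\alpha_1$ has a unique fixed point $p\in\alpha_1(D)\subset W$, and $|\alpha_1'(p)|<1$. Then $f_1^N(p)=f_1^N(\alpha_1(p))=p$, so $p$ is fixed by $f_1^N$. Differentiating $f_1^N\circ\alpha_1=\operatorname{id}$ at $p$ gives
$$
(f_1^N)'(p)=\alpha_1'(p)^{-1},
$$
which has modulus strictly greater than $1$. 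Hence $p$ is a repelling fixed point of $f_1^N|_W$. The derivative $\alpha_1'(p)$ is nonzero because $\alpha_1$ is an inverse branch, hence univalent.
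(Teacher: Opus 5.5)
Your proposal is correct and follows essentially the same route as the paper: choose $N$ with $\delta d_1^N\geq 2$ in Lemma \ref{lem:many-contracting-returns}, set $W:=\alpha_1(D)\cup\alpha_2(D)$, and obtain the repelling fixed point from the fixed point of the strict self-map $\alpha_1\colon D\to D$ via Schwarz--Pick. Your extra verifications of non-injectivity and of the relation $\varphi\circ f_1^N=f_2^N\circ\varphi$ on $W$ are valid. The only minor slip is that the disc $D$ is the one fixed in Lemma \ref{lem:uniform-charged-inverse-box}, not a new one supplied by Lemma \ref{lem:many-contracting-returns}.
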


\begin{proof}
Choose a sufficiently large return time $N$ in Lemma
\ref{lem:many-contracting-returns} so that
$\delta d_1^N\geq2$, and choose two distinct branches
$\alpha_1,\alpha_2\in\mathscr B_N$. Set
$
W:=\alpha_1(D)\cup\alpha_2(D).
$
Then $W\Subset D$, the map $f_1^N\colon W\to D$ is non-injective,
and
$\varphi\circ f_1^N=f_2^N\circ\varphi$ on $W$.

Since $\alpha_1(D)\Subset D$, the strict self-map
$\alpha_1\colon D\to D$ has a fixed point $p\in D$.
By Schwarz--Pick,
$|\alpha_1'(p)|<1$. Since $\alpha_1$ is an inverse branch of
$f_1^N$, we have
$f_1^N(p)=p$ and
$|(f_1^N)'(p)|=|\alpha_1'(p)|^{-1}>1$.
Thus $p\in W$ is a repelling fixed point of $f_1^N$.
\end{proof}

We can now prove Proposition \ref{prop:finite-case-algebraization}.

\begin{proof}[Proof of Proposition \ref{prop:finite-case-algebraization}]
Take $D,W,\varphi$ and $N$ from Lemma \ref{lem:charged-graph-return-configuration}. The open set $D$ is connected,
$W\subset D$, the map $f_1^N\colon W\to D$ is non-injective and has
a repelling fixed point by Lemma \ref{lem:charged-graph-return-configuration}, and
$\varphi$ is non-constant with
$\varphi\circ f_1^N=f_2^N\circ\varphi$ on $W$.

Apply \cite[Theorem 2.1]{JiXieLocalRigidityJuliaSets} with
$f=f_1^N$, $g=f_2^N$, $U=D$, $U'=W$, and
$h=\varphi|_D$.
Since $f_1^N$ and $f_2^N$ are non-Latt\`es, the
theorem gives an irreducible algebraic curve
$C\subset\mathbb P^1\times\mathbb P^1$ invariant under
$f_1^N\times f_2^N$ and containing the graph of $\varphi$ on $D$.
Moreover, the compact set $K$ in Lemma
\ref{lem:uniform-charged-inverse-box} has positive $\hat\nu$-measure
and projects into the graph of $\varphi$ over $D^-\Subset D$.
Therefore
$\nu(\Gamma_{\varphi|_D})\geq\hat\nu(K)>0$, and hence
$\nu(C)>0$.
\end{proof}

\subsection{The Latt\`es case}
\label{subsec:finite-lattes}

We now assume that one of $f_1,f_2$ is Latt\`es. The charged local
graph first forces the other map to be Latt\`es as well; see Lemma
\ref{lem:lattes-propagation}. We then use affine uniformizations and
the many-return statement in Lemma \ref{lem:many-contracting-returns}
to algebraize the local relation.
The affine-lattice part of the argument is reminiscent of rigidity
for higher-rank algebraic actions, although those results do not
apply to the present rank-one, non-invertible setting; see, for
example,
\cite{KalininKatok,KalininSpatzier}.
The main result of this subsection is the following proposition.

\begin{prop}\label{prop:lattes-finite-algebraization}

Assume that one of $f_1$ and $f_2$ is a Latt\`es map. Then there exist $N\geq1$ and an
irreducible algebraic curve $C\subset\mathbb P^1\times\mathbb P^1$
such that $F^N(C)=C$ and $\nu(C)>0$.
\end{prop}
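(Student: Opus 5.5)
Assume first that $f_1$ is Latt\`es; the case where $f_2$ is Latt\`es is symmetric, after exchanging the roles of the coordinates and using the transpose graph. The plan has three steps.

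\textbf{Step 1: propagate the Latt\`es property to $f_2$.} Take $D$, $N$ and the family $\mathscr B_N$ from Lemma \ref{lem:many-contracting-returns}. Write $f_1$ as a quotient: there are a torus $\mathbb C/\Lambda_1$, an affine map $L_1(w)=\beta_1 w+\gamma_1$ and a finite branched covering $\Theta_1\colon\mathbb C/\Lambda_1\to\mathbb P^1$ with $\Theta_1\circ L_1=f_1\circ\Theta_1$. Shrinking $D$ to avoid the critical values of $\Theta_1$, choose a local inverse $\sigma_1$ of $\Theta_1$ on $D$ and set $\phi:=\varphi\circ\Theta_1\circ\pi_{\Lambda_1}$ near a point of $\sigma_1(D)$.

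The relation $\varphi\circ f_1^N=f_2^N\circ\varphi$ on $W$ makes $\phi$ a local semiconjugacy from the affine map $L_1^N$ to $f_2^N$. The key input is that the returns near a repelling fixed point $p$ are affine in the Latt\`es coordinate. This yields a local linearization of $f_2^N$ at the repelling fixed point $\varphi(p)$, and the linearizing coordinate extends through all inverse returns. I would show that the maximal entropy measure $\mu_2=\varphi_*(\mu_1|_U)$ (up to normalization, locally) is then absolutely continuous with smooth density near a point of $J(f_2)$. The Latt\`es characterization of Zdunik (and Mayer) then says that $\mu_2$ is absolutely continuous with respect to Lebesgue, or $J(f_2)$ is smooth, only if $f_2$ is Latt\`es, Chebyshev or monomial. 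To rule out the Chebyshev and monomial cases, note that $J(f_2)$ would then be contained in a real analytic curve, whereas $\varphi(J(f_1)\cap U)$ has nonempty interior since $J(f_1)=\mathbb P^1$. So $f_2$ is Latt\`es. I expect this propagation step to be the delicate part, and would try the Zdunik route first.

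\textbf{Step 2: lift the local relation to an affine map.} Now write $f_2$ as a quotient via $\Theta_2$, $\Lambda_2$ and $L_2(w)=\beta_2 w+\gamma_2$. Lift $\varphi$ to a biholomorphism $\tilde\varphi$ between small discs in $\mathbb C$. Each return branch $\alpha\in\mathscr B_N$ lifts to $\tilde\alpha(w)=\beta_1^{-N}(w+\lambda_\alpha)+c_1$ with $\lambda_\alpha\in\Lambda_1$ (up to the finite deck group of $\Theta_1$). The relation $f_2^N\circ\varphi\circ\alpha=\varphi$ lifts to
\[
\tilde\varphi(w)=L_2^N\bigl(\tilde\varphi(\tilde\alpha(w))\bigr)+\lambda'_\alpha ,
\]
with $\lambda'_\alpha\in\Lambda_2$, again up to the deck group of $\Theta_2$. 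Iterating along compositions of contracting branches, a Schwarzian or linearization argument shows $\tilde\varphi$ is affine: differentiate twice, and the contraction forces $\tilde\varphi''/\tilde\varphi'$ to vanish. So $\tilde\varphi(w)=sw+t$.

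\textbf{Step 3: the lattice count and the invariant curve.} Substituting $\tilde\varphi(w)=sw+t$ gives $s\lambda_\alpha\in\Lambda_2+(\text{finitely many constants})$, and also $|\beta_1|^N=|\beta_2|^N$. Since there are at least $\delta d_1^N$ branches, whose lattice vectors $\lambda_\alpha$ fill a positive proportion of $\Lambda_1/\beta_1^N\Lambda_1$, taking differences shows that $s\Lambda'\subset\Lambda_2$ for some finite-index sublattice $\Lambda'\subset\Lambda_1$. Hence $w\mapsto sw+t$ descends to an isogeny-type correspondence $\mathbb C/\Lambda'\to\mathbb C/\Lambda_2$. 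Its graph pushed forward by $\Theta_1\times\Theta_2$ is an irreducible algebraic curve $C$ containing $\Gamma_\varphi$, so $\nu(C)>0$ by Lemma \ref{lem:uniform-charged-inverse-box}.

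Finally, $F^N(C)$ is again irreducible and contains $\Gamma_\varphi$ by the functional equation, so $F^N(C)=C$ after possibly enlarging $N$ to make the affine maps compatible with $\Lambda'$.
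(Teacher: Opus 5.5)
\textbf{The gap: the finite-index claim in Step 3.} Fix a return time and pigeonhole onto one pair of rotation parts $(\zeta,\xi)\in\mathcal U_1\times\mathcal U_2$. You dropped these rotation parts. The lifted identity really reads $s\,\xi\zeta^{-1}\lambda_\alpha\in\Lambda_2+\omega_N$, with $\omega_N$ depending on $N$ and on $(\zeta,\xi)$. You need one fixed pair along a subsequence of return times to get a subgroup independent of $N$. Then all the $\lambda_\alpha$ lie in one coset of $\Lambda'':=\{\lambda\in\Lambda_1: s\,\xi\zeta^{-1}\lambda\in\Lambda_2\}$.

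However, knowing that these vectors occupy a positive proportion of the residues in $\Lambda_1/\beta_1^N\Lambda_1$ only shows that $\Lambda''+\beta_1^N\Lambda_1$ has bounded index in $\Lambda_1$. That does not force $\Lambda''$ to have rank two. For the Latt\`es map induced by $w\mapsto(2+i)w$ on $\mathbb C/\mathbb Z[i]$, one has $\mathbb Z[i]/(2+i)^N\mathbb Z[i]\cong\mathbb Z/5^N\mathbb Z$. The rank-one subgroup $\mathbb Z$ surjects onto this quotient for every $N$, so residue counting alone cannot exclude rank one.

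The missing input is an archimedean bound on the vectors themselves. Since $\widetilde\alpha=L_1^{-N}\circ(w\mapsto\zeta_\alpha w+\lambda_\alpha)$ maps the fixed bounded disc $\widetilde D$ into itself, you get $\zeta_\alpha w_0+\lambda_\alpha\in L_1^N(\widetilde D)$. Hence $|\lambda_\alpha|\le C|\beta_1|^N$ with $C$ independent of $N$; this is Lemma \ref{lem:lattes-return-parameter-bound}. So at least $\delta'|\beta_1|^{2N}$ distinct points of one coset of $\Lambda''$ lie in $D(0,C|\beta_1|^N)$. A coset of a subgroup of rank at most one has only $O(|\beta_1|^N)$ points there, which is a contradiction. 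This is how Lemma \ref{lem:lattes-original-slope} concludes. The relations $\zeta\Lambda_1=\Lambda_1$ and $\xi\Lambda_2=\Lambda_2$ then transfer the statement from $s\,\xi\zeta^{-1}$ to the slope $s$ itself.

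\textbf{Step 2 is a valid alternative route.} The paper (Lemma \ref{lem:lattes-local-affine}) conjugates the relation at the repelling fixed point to $H(\lambda z)=\lambda H(z)$ and compares Taylor coefficients. You instead set $\mathcal N:=\widetilde\varphi''/\widetilde\varphi'$, which satisfies $\mathcal N(w)=\beta_1^{-N}\zeta_\alpha\,\mathcal N(\widetilde\alpha(w))$. Iterating a single branch with $\widetilde\alpha(\widetilde D)\Subset\widetilde D$ gives $\mathcal N\equiv0$. This needs no fixed point and works equally well.

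\textbf{Step 1 needs repair.} The identity $\mu_2=\varphi_*(\mu_1|_U)$, even locally and up to normalization, is not available a priori, because $\nu$ need not be carried by $\Gamma_\varphi$. What you actually have is $0<\varphi_*\alpha\le\mu_2$, where $\alpha:=(\pi_1)_*(\nu|_{\Gamma_\varphi})\le\mu_1\ll m$. The step that makes Zdunik applicable is the following. Pushforward by $f_2$ preserves both absolute continuity and singularity with respect to area, so the Lebesgue decomposition of $\mu_2$ is $f_2$-invariant. Ergodicity then upgrades the non-zero absolutely continuous part to $\mu_2\ll m$ (Lemma \ref{lem:lattes-propagation}). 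The linearization detour and any smoothness of the density are unnecessary.

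The Chebyshev and power cases are excluded because $J(f_2)\supset\operatorname{supp}\varphi_*\alpha$ has positive area. Your stated reason, that $\varphi(J(f_1)\cap U)\subset J(f_2)$, has not been established.

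Finally, shrink $U$ so that $U$ and $\varphi(U)$ avoid the branch values of $\Theta_1$ and $\Theta_2$ \emph{before} extracting the return branches. Shrinking $D$ afterwards can destroy $\alpha(D)\Subset D$.
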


\begin{lem}
\label{lem:lattes-propagation}
Let $\nu$ be an ergodic joining of
$(J(f_1),f_1,\mu_1)$ and $(J(f_2),f_2,\mu_2)$. Suppose that $\nu$
charges the graph of a local biholomorphism. If one of $f_1,f_2$ is a Latt\`es map, then so is the other.
\end{lem}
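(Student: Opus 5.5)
The plan is to compare the two maximal-entropy measures through the charged graph and then use a dimension characterization of Latt\`es maps. By symmetry of the statement under exchanging the two coordinates (replace $\nu$ by its image under the flip $(x,y)\mapsto(y,x)$ and $\varphi$ by $\varphi^{-1}$), it suffices to assume that $f_1$ is Latt\`es and to prove that $f_2$ is Latt\`es. The dimension characterization I will use combines three known facts:
\begin{itemize}
\item Ma\~n\'e's formula: $\mu_f$ is exact-dimensional with $\dim\mu_f=\log d/(2\chi_f)$.
\item Ruelle's inequality: $\chi_f\geq\frac12\log d$.
\item Zdunik's rigidity: $\chi_f=\frac12\log d$ if and only if $f$ is Latt\`es.
\end{itemize}
Consequently $f$ is Latt\`es if and only if $\dim\mu_f=2$. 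For a Latt\`es map this holds because $\mu_f$ has a smooth density, positive off a finite set.

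\textbf{Step 1: transport a piece of $\mu_1$ into $\mu_2$.} As in the standing reduction, take $\varphi\colon U\to V$ biholomorphic with $\nu(\Gamma_\varphi)>0$. Let $\nu':=\nu|_{\Gamma_\varphi}$ and $m:=(\pi_1)_*\nu'$. Then $m\leq\mu_1$, $m\neq0$, and $m$ is carried by $U$. Since $\pi_2=\varphi\circ\pi_1$ on $\Gamma_\varphi$, we have $(\pi_2)_*\nu'=\varphi_*m\leq\mu_2$. Write $m=g\,\mu_1$ with $0\leq g\leq1$, and let $A:=\{g>0\}\cap U$, so that $\mu_1(A)>0$. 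Since $f_1$ is Latt\`es, $\mu_1\ll\mathrm{Leb}$ with density positive almost everywhere. Hence $m\ll\mathrm{Leb}$, and because $\varphi$ is biholomorphic, $\varphi_*m$ is a nonzero measure absolutely continuous with respect to Lebesgue measure. Moreover $\varphi_*m\leq\mu_2$.

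\textbf{Step 2: local dimension of $\mu_2$.} Write $\varphi_*m=\theta\,\mathrm{Leb}$ with $\theta\in L^1$ and $\theta\not\equiv0$. By Lebesgue differentiation, for Lebesgue-a.e.\ $y$ with $\theta(y)>0$ we have $\varphi_*m(B(y,r))\geq c(y)r^2$ for small $r$. Since $\varphi_*m\leq\mu_2$, the same lower bound holds for $\mu_2$, so $\underline{\dim}_{\mu_2}(y)\leq2$ on a set $B$ with $\varphi_*m(B)>0$, and in particular $\mu_2(B)>0$. Conversely, $\varphi_*m\ll\mathrm{Leb}$ and $\varphi_*m\leq\mu_2$ imply that $\mu_2$ is not singular with respect to Lebesgue. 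I will extract the reverse inequality via a Radon--Nikodym argument: the density of $\varphi_*m$ with respect to $\mu_2$ is at most $1$. Hence on a set of positive $\mu_2$-measure, $\mu_2(B(y,r))\gtrsim\varphi_*m(B(y,r))\approx\theta(y)\pi r^2$ and also $\mu_2(B(y,r))\lesssim\varphi_*m(B(y,r))$ by density points of the Radon--Nikodym derivative. This gives local dimension exactly $2$ at $\mu_2$-positively many points. By exact-dimensionality of $\mu_2$, this forces $\dim\mu_2=2$.

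\textbf{Step 3: conclude.} From $\log d_2/(2\chi_{f_2})=2$ we get $\chi_{f_2}=\frac14\log d_2$. That contradicts Ruelle's bound unless the computation is instead organized with the upper bound on local dimension, which always holds since $\dim\mu_2\leq2$. So the correct bookkeeping is this: $\dim\mu_2\leq2$ always, equality is forced by Step 2, and equality means $\chi_{f_2}=\frac12\log d_2$ in Ma\~n\'e's normalization $\dim\mu=\log d/\chi$. Zdunik's theorem then yields that $f_2$ is Latt\`es. I expect the main obstacle to be Step 2, and specifically the Radon--Nikodym comparison giving $\mu_2(B(y,r))\lesssim r^2$, since only the inequality $\varphi_*m\leq\mu_2$ is available rather than equality. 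If that comparison is delicate, I would instead invoke directly Zdunik's dichotomy: for a non-Latt\`es map, $\mu_2$ is singular with respect to $\Lambda_{\dim\mu_2}$, and in particular with respect to Lebesgue whenever $J(f_2)\neq\mathbb P^1$. In the case $J(f_2)=\mathbb P^1$ I would use the fact that non-Latt\`es maps have $\mu_2\perp\mathrm{Leb}$. This contradicts the non-singularity established in Step 1.
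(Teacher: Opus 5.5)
Your argument is correct in substance, but after the common first step it follows a different route from the paper. Both proofs start from the non-zero measure $\varphi_*m\le\mu_2$ of your Step 1 (the paper's $\beta=\varphi_*\alpha$), which is absolutely continuous with respect to area.

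The paper then argues purely ergodic-theoretically. A rational map sends area-null sets to area-null sets and pulls them back to area-null sets, so $(f_2)_*$ preserves both parts of the Lebesgue decomposition $\mu_2=\mu_2^{\mathrm{ac}}+\mu_2^{\mathrm{s}}$. Hence $\mu_2^{\mathrm{ac}}\neq0$ is invariant. After normalization it is an invariant probability absolutely continuous with respect to the ergodic measure $\mu_2$, so it equals $\mu_2$. Zdunik's characterization ($\mu_f\ll$ area iff $f$ is Latt\`es) then concludes.

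You instead upgrade ``$\mu_2$ has a non-zero absolutely continuous part'' to ``$\dim\mu_2=2$'' via Ma\~n\'e's formula and Ruelle's inequality, and then use the dimension form of the Latt\`es characterization. The paper's route is shorter and needs only the absolute-continuity form of Zdunik's theorem. Yours needs more machinery, but once the differentiation step of your Step 2 is carried out on both sides it proves more. Since $\varphi$ is locally bi-Lipschitz, a charged local biholomorphic graph forces $\dim\mu_1=\dim\mu_2$, i.e.\ $\log d_1/\chi_{f_1}=\log d_2/\chi_{f_2}$. Latt\`es propagation is the extreme case of dimension $2$.

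Several points need repair.

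\textbf{The formula in your first bullet is wrong.} Ma\~n\'e's formula is $\dim\mu_f=\log d/\chi_f$ with $\chi_f=\int\log|f'|\,d\mu_f$, not $\log d/(2\chi_f)$. With your version, Ruelle's inequality would give $\dim\mu_f\le 1$ for every $f$, and your equivalence ``Latt\`es iff $\dim\mu_f=2$'' would fail. Your Step 3 ends with the correct normalization, but the contradiction you meet there comes from this wrong formula, not from bookkeeping; the first bullet should simply be replaced.

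\textbf{Step 2 uses the wrong direction and is more than you need.} The lower bound $\mu_2(B(y,r))\ge c\,r^2$ is vacuous, since local dimensions of $\mu_2$ are at most $2$ anyway; only the upper bound matters. Your Radon--Nikodym sketch does supply it. Besicovitch's differentiation theorem gives $\beta(B(y,r))/\mu_2(B(y,r))\to\frac{d\beta}{d\mu_2}(y)>0$ at $\mu_2$-almost every point of $\{\frac{d\beta}{d\mu_2}>0\}$, and Lebesgue differentiation gives $\beta(B(y,r))=O(r^2)$ at $\beta$-almost every point. But none of this is needed here. If $\dim\mu_2<2$, then $\mu_2$ is carried by a set of Hausdorff dimension less than $2$, hence of zero area, which contradicts the non-singularity you already have from Step 1.

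\textbf{Your fallback misquotes Zdunik.} Singularity of $\mu_f$ with respect to $\Lambda_{\dim\mu_f}$ characterizes non-exceptional maps, not non-Latt\`es ones; for power and Chebyshev maps, $\mu_f$ is absolutely continuous with respect to length. Also, $J(f_2)\neq\mathbb P^1$ does not imply that $J(f_2)$ has zero area. The statement you actually need, that $\mu_2$ is singular with respect to area when $f_2$ is not Latt\`es, is true. However, proving it amounts to the paper's Lebesgue-decomposition argument, or to the dimension argument just given.
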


\begin{proof}
By symmetry, suppose that $f_1$ is Latt\`es. Let
$\varphi\colon U\to V$ be a biholomorphic restriction of the
charged local graph and set
$$
\eta:=\nu|_{\Gamma_\varphi},
\qquad
\alpha:=(\pi_1)_*\eta,
\qquad
\beta:=(\pi_2)_*\eta.
$$
Then $\eta\neq0$ and
$
0<\alpha\leq\mu_1,
$
$
0<\beta\leq\mu_2,
$
while
$
\beta=\varphi_*\alpha.
$

Let $m$ denote the spherical area
measure
on $\mathbb P^1$. Since $f_1$ is
Latt\`es, its maximal-entropy measure is absolutely continuous with
respect to $m$.
Hence $\alpha\ll m$. Since $\varphi$ is locally
biholomorphic, it sends $m$-null sets to $m$-null sets locally, and
therefore
$
\beta=\varphi_*\alpha\ll m.
$
Thus $\mu_2$ has a non-zero absolutely continuous component with
respect to $m$.

Write the Lebesgue decomposition
$\mu_2=\mu_2^{\mathrm{ac}}+\mu_2^{\mathrm{s}}$ 
of $\mu_2$
with respect to $m$. Since
$0<\beta\le\mu_2$ and $\beta\ll m$, we have
$\mu_2^{\mathrm{ac}}\ne0$.
Both the image and the inverse image of an $m$-null set under a
rational map are $m$-null. Hence pushforward by $f_2$ preserves both
absolute continuity and singularity with respect to $m$. From
$(f_2)_*\mu_2=\mu_2$ and 
the uniqueness of the Lebesgue decomposition,
it follows that $(f_2)_*\mu_2^{\mathrm{ac}}=\mu_2^{\mathrm{ac}}$
and $(f_2)_*\mu_2^{\mathrm{s}}=\mu_2^{\mathrm{s}}$.

After normalizing $\mu_2^{\mathrm{ac}}$ to have total mass one, we
obtain an $f_2$-invariant probability measure absolutely continuous
with respect to the ergodic invariant probability measure $\mu_2$.
It must therefore equal $\mu_2$, and hence $\mu_2\ll m$. By Zdunik's
characterization \cite{Zdunik90}, $f_2$ is Lattès.
\end{proof}

For the rest of this subsection, assume that both $f_1$ and $f_2$ are
Latt\`es. For $i=1,2$, choose a standard crystallographic
uniformization $\Theta_i\colon\mathbb C\to\mathbb P^1$ with deck
group $\Gamma_i$ and an affine lift
$A_i(z)=a_i z+b_i$, $|a_i|>1$, satisfying
$\Theta_i\circ A_i=f_i\circ\Theta_i$. We choose the presentation so
that
$\Gamma_i=\{z\mapsto\zeta z+\lambda:
\zeta\in\mathcal U_i,\ \lambda\in\Lambda_i\}$, where
$\Lambda_i$ is a rank-two translation lattice and
$\mathcal U_i$ is a finite cyclic group preserving $\Lambda_i$. In particular, $\deg f_i=|a_i|^2$.

Since the branch values of the two uniformizations are finite and the
marginals of $\nu$ are non-atomic, we may restrict the charged graph
to smaller connected discs, still denoted by $U$ and $V$, so that
$\nu(\Gamma_\varphi)>0$, $\varphi\colon U\to V$ is biholomorphic,
and $U$ and $V$ avoid the branch values of $\Theta_1$ and
$\Theta_2$. 

Applying Lemmas \ref{lem:uniform-charged-inverse-box} and
\ref{lem:many-contracting-returns} to this restricted graph, we
retain the notation $D^-\Subset D$, $K$, and $\mathscr B_N$.
We also choose $N$, $W$, and a repelling fixed point $p$ as in
Lemma \ref{lem:charged-graph-return-configuration}.

\begin{lem}\label{lem:lattes-local-affine}
Let $D$ be the return disc from Lemma
\ref{lem:charged-graph-return-configuration}, and choose components
$\widetilde D\subset\Theta_1^{-1}(D)$ and
$\widetilde V\subset\Theta_2^{-1}(\varphi(D))$. Then the lift
$H:=(\Theta_2|_{\widetilde V})^{-1}\circ\varphi\circ
(\Theta_1|_{\widetilde D})$ is the restriction of an affine map
$H(z)=cz+e$ with $c\neq0$.
\end{lem}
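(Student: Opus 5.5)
The plan is to show that the pre-Schwarzian (nonlinearity) $N_H:=H''/H'$ of the lift vanishes identically. We get this from a functional equation satisfied by $H$ along one contracting return branch, since $N_H$ is invariant under post-composition with affine maps.

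\textbf{Step 1: the lift $H$.} Since $U$ and $V$ avoid the branch values of $\Theta_1$ and $\Theta_2$, and $D\subset U$, $\varphi(D)\subset V$ are simply connected, the restrictions $\Theta_1|_{\widetilde D}\colon\widetilde D\to D$ and $\Theta_2|_{\widetilde V}\colon\widetilde V\to\varphi(D)$ are biholomorphisms. Hence $H$ is a well-defined injective holomorphic map $\widetilde D\to\widetilde V$ with $H'\neq0$.

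\textbf{Step 2: lifting an inverse branch.} Fix one branch $\alpha=\alpha_1\in\mathscr B_N$ from Lemma~\ref{lem:many-contracting-returns}. It is an inverse branch of $f_1^N$ with $\alpha(D)\Subset D$ and $f_2^N\circ\varphi\circ\alpha=\varphi$ on $D$. From $\Theta_1\circ A_1^N=f_1^N\circ\Theta_1$ we obtain a lift of $\alpha$ of the form $z\mapsto A_1^{-N}(z)$, which is affine with derivative $a_1^{-N}$. This lift maps $\widetilde D$ onto a component of $\Theta_1^{-1}(\alpha(D))$, and that component lies in some component of $\Theta_1^{-1}(D)$. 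Composing with the deck element of $\Gamma_1$ carrying that component to $\widetilde D$ gives an affine map
\[
\widetilde\alpha(z)=\zeta a_1^{-N}z+\text{const},\qquad \zeta\in\mathcal U_1,
\]
with $\Theta_1\circ\widetilde\alpha=\alpha\circ\Theta_1$ and $\widetilde\alpha(\widetilde D)\Subset\widetilde D$.

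\textbf{Step 3: the functional equation.} Lifting $f_2^N\circ\varphi\circ\alpha=\varphi$ gives
\[
\Theta_2\circ A_2^N\circ H\circ\widetilde\alpha=\Theta_2\circ H
\]
on $\widetilde D$. Two local lifts of the same map through the branched covering $\Theta_2$, away from branch points, differ by a deck transformation. Since $\widetilde D$ is connected and $\Gamma_2$ is discrete, this deck transformation is a single $\gamma\in\Gamma_2$, so $A_2^N\circ H\circ\widetilde\alpha=\gamma\circ H$. Thus
\[
H\circ\widetilde\alpha=L\circ H,\qquad L:=A_2^{-N}\circ\gamma\ \text{affine}.
\]

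\textbf{Step 4: vanishing of the nonlinearity.} The nonlinearity $N_H=H''/H'$ is holomorphic on $\widetilde D$ because $H'\neq0$. It is unchanged by affine post-composition, and for affine $\widetilde\alpha$ the chain rule gives $N_{H\circ\widetilde\alpha}=\widetilde\alpha'\cdot(N_H\circ\widetilde\alpha)$. Applying this to $H\circ\widetilde\alpha=L\circ H$ yields
\[
N_H(z)=\zeta a_1^{-N}\,N_H(\widetilde\alpha(z)).
\]
Iterating $k$ times,
\[
|N_H(z)|=|a_1|^{-Nk}\,\bigl|N_H(\widetilde\alpha^k(z))\bigr|.
\]
Every $\widetilde\alpha^k(z)$ lies in the compact set $\overline{\widetilde\alpha(\widetilde D)}\subset\widetilde D$, on which $N_H$ is bounded. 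Since $|a_1|>1$, letting $k\to\infty$ gives $N_H\equiv0$. Therefore $H''\equiv0$, so $H(z)=cz+e$, and $c\neq0$ because $H$ is injective.

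\textbf{Main obstacle.} The delicate point is Step 2–3: checking that the lifted branch can be normalized to map $\widetilde D$ into itself, and that the deck element $\gamma$ in the lifted relation is constant. This is where the choice of $U$ and $V$ avoiding branch values, and the connectedness of $\widetilde D$, are used. Once these are in place, the contraction argument of Step 4 is routine.
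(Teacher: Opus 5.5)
Your proof works once one slip in Step~2 is repaired, and it takes a different route from the paper's.

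\textbf{Comparison with the paper.} The paper works at the repelling fixed point $p\in W$ supplied by Lemma~\ref{lem:charged-graph-return-configuration}. It lifts the \emph{expanding} maps $f_1^N,f_2^N$ near $\widetilde p$ and $\widetilde q=H(\widetilde p)$ to affine maps $B_i=\gamma_i^{-1}\circ A_i^N$ fixing these points, and obtains $H(\lambda_1z)=\lambda_2H(z)$ in translated coordinates. It then deduces $\lambda_1=\lambda_2$ from $H'(0)\neq0$, kills the Taylor coefficients via $c_k(\lambda^k-\lambda)=0$ with $|\lambda|>1$, and extends by the identity theorem. You instead use a single \emph{contracting} return branch on all of $\widetilde D$, together with the affine-invariant nonlinearity $H''/H'$. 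This avoids the fixed point, the comparison of multipliers (post-composition by $L$ is invisible to $N_H$), and power series, and it gives affinity on $\widetilde D$ directly. The price is that you need an affine lift of the return branch mapping $\widetilde D$ relatively compactly into itself. The paper only establishes this afterwards, in Lemma~\ref{lem:lattes-return-parameter-bound}.

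\textbf{The slip in Step~2.} The restriction $A_1^{-N}|_{\widetilde D}$ lifts \emph{some} inverse branch of $f_1^N$ on $D$, not necessarily $\alpha$. Post-composing with $\gamma\in\Gamma_1$ does not change which branch is lifted, since $\Theta_1\circ\gamma=\Theta_1$. So your $\widetilde\alpha$ need not satisfy $\Theta_1\circ\widetilde\alpha=\alpha\circ\Theta_1$, and in general need not map $\widetilde D$ into itself.

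The fix is to define
$\widetilde\alpha:=(\Theta_1|_{\widetilde D})^{-1}\circ\alpha\circ\Theta_1|_{\widetilde D}$ directly.
\begin{itemize}
\item Then $\widetilde\alpha(\widetilde D)\Subset\widetilde D$, because $\alpha(D)\Subset D$.
\item The identity $\Theta_1\circ A_1^N\circ\widetilde\alpha=\Theta_1$ on the connected set $\widetilde D$ forces $A_1^N\circ\widetilde\alpha=\gamma_\alpha\in\Gamma_1$.
\item Hence $\widetilde\alpha=A_1^{-N}\circ\gamma_\alpha$ is affine with $|\widetilde\alpha'|=|a_1|^{-N}<1$.
\end{itemize}
Note that this is a \emph{pre}-composition by a deck transformation, exactly as in Lemma~\ref{lem:lattes-return-parameter-bound}. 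With this $\widetilde\alpha$, Steps~3 and~4 go through as you wrote them.
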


\begin{proof}
Let $p$ be the repelling fixed point from Lemma
\ref{lem:charged-graph-return-configuration} and set $q:=\varphi(p)$. 
The relation $\varphi\circ f_1^N=f_2^N\circ\varphi$ near $p$, together
with $f_1^N(p)=p$, gives $f_2^N(q)=q$.
Choose lifts
$\widetilde p\in\widetilde D$ and
$\widetilde q:=H(\widetilde p)$. Since $p$ and $q$ are fixed by
$f_1^N$ and $f_2^N$, respectively, there are deck transformations
$\gamma_i\in\Gamma_i$ such that
$A_1^N(\widetilde p)=\gamma_1(\widetilde p)$ and
$A_2^N(\widetilde q)=\gamma_2(\widetilde q)$.
Set $B_i:=\gamma_i^{-1}\circ A_i^N$. Then $B_1$ fixes
$\widetilde p$ and $B_2$ fixes $\widetilde q$. The local relation
$\varphi\circ f_1^N=f_2^N\circ\varphi$ on $W$ lifts near
$\widetilde p$ to $H\circ B_1=B_2\circ H$.
Indeed, both sides are lifts through $\Theta_2$ of the same local
map and agree at $\widetilde p$.

Translate source and target coordinates so that
$\widetilde p=\widetilde q=0$. Since the $B_i$ are affine and fix the
origin, write $B_i(z)=\lambda_i z$. The linear part of a deck
transformation has modulus one, so
$|\lambda_i|=|a_i|^N>1$. The lifted relation becomes
$H(\lambda_1z)=\lambda_2H(z)$. Since $H'(0)\neq0$, differentiation at
zero gives $\lambda_1=\lambda_2=: \lambda$. Writing
$H(z)=\sum_{k\geq1}c_kz^k$, we obtain
$c_k(\lambda^k-\lambda)=0$ for every $k\geq2$. Since
$|\lambda|>1$, all $c_k$ with $k\geq2$ vanish. Hence $H$ is affine
near the origin, and the identity theorem gives
$H(z)=cz+e$ on the connected domain $\widetilde D$.
\end{proof}

\begin{lem}\label{lem:lattes-return-parameter-bound}
There is a constant $C>0$, independent of the return time $N$, such
that every returning branch $\alpha\in\mathscr B_N$ has a canonical
lift $\widetilde\alpha=A_1^{-N}\circ\gamma_\alpha$, where
$\gamma_\alpha(z)=\zeta_\alpha z+\lambda_\alpha$ with
$\zeta_\alpha\in\mathcal U_1$,
$\lambda_\alpha\in\Lambda_1$, and
$|\lambda_\alpha|\leq C|a_1|^N$. Distinct returning branches give
distinct $\gamma_\alpha$.
\end{lem}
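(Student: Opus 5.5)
Every returning branch $\alpha\in\mathscr B_N$ is an inverse branch of $f_1^N$ on the disc $D$. Since $D$ avoids the branch values of $\Theta_1$, we lift it through the fixed component $\widetilde D\subset\Theta_1^{-1}(D)$ and identify $\Theta_1|_{\widetilde D}$ with a biholomorphism onto $D$. The map $\widetilde\alpha:=(\Theta_1|_{\widetilde D})^{-1}\circ\alpha\circ\Theta_1|_{\widetilde D}$ is then well defined and holomorphic on $\widetilde D$, because $\alpha(D)\Subset D$. Applying $\Theta_1$ to $A_1^N\circ\widetilde\alpha$ gives
$$
\Theta_1\circ A_1^N\circ\widetilde\alpha=f_1^N\circ\Theta_1\circ\widetilde\alpha=f_1^N\circ\alpha\circ\Theta_1=\Theta_1
\quad\text{on }\widetilde D.
$$
So $A_1^N\circ\widetilde\alpha$ is a local lift of the identity through $\Theta_1$. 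Away from branch points such a lift agrees, on the connected set $\widetilde D$, with a single deck transformation $\gamma_\alpha\in\Gamma_1$. By discreteness of $\Gamma_1$ and the identity theorem, this $\gamma_\alpha$ is unique. Hence $\widetilde\alpha=A_1^{-N}\circ\gamma_\alpha$ with $\gamma_\alpha(z)=\zeta_\alpha z+\lambda_\alpha$, which we take as the canonical lift.

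For the bound on $\lambda_\alpha$, fix a base point $w\in\widetilde D$. Then $\widetilde\alpha(w)\in\widetilde D$, and $\widetilde D$ is a bounded set independent of $N$. From $\gamma_\alpha(w)=A_1^N(\widetilde\alpha(w))$ we get
$$
\lambda_\alpha=A_1^N(\widetilde\alpha(w))-\zeta_\alpha w .
$$
Write $A_1^N(z)=a_1^Nz+b_1(a_1^{N}-1)/(a_1-1)$. Since $|a_1|>1$, we have $|A_1^N(z)|\le |a_1|^N(|z|+|b_1|/(|a_1|-1))$. Also $|\zeta_\alpha|=1$ and $w,\widetilde\alpha(w)$ lie in the bounded set $\widetilde D$. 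Together these give
$$
|\lambda_\alpha|\le |a_1|^N\Bigl(\sup_{\widetilde D}|z|+\tfrac{|b_1|}{|a_1|-1}\Bigr)+|w|\le C|a_1|^N,
$$
with $C$ depending only on $\widetilde D$, $w$ and $A_1$, and not on $N$.

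Distinctness is immediate. If $\gamma_\alpha=\gamma_\beta$, then $\widetilde\alpha=\widetilde\beta$, and projecting by $\Theta_1$ gives $\alpha=\beta$ on $D$.

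The main point needing care is the well-definedness of the lift in the first step. One must verify that $\Theta_1|_{\widetilde D}$ is injective onto $D$; this holds because $\widetilde D$ is a component of $\Theta_1^{-1}(D)$ with $D$ a simply connected disc free of branch values. One must also verify that $\widetilde\alpha(\widetilde D)$ stays in $\widetilde D$, which follows from $\alpha(D)\subset D$. The rest is bookkeeping.
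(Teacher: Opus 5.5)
Your proposal is correct and follows essentially the same route as the paper. You lift $\alpha$ through $\Theta_1|_{\widetilde D}$, identify $A_1^N\circ\widetilde\alpha$ with a unique deck transformation by connectedness, bound the translation part at a fixed base point of the bounded set $\widetilde D$, and get distinctness by projecting back. Your explicit formula for $A_1^N$ and your use of $|\zeta_\alpha|=1$ just spell out the paper's estimate $|b_{1,N}|=O(|a_1|^N)$ and its appeal to the finiteness of $\mathcal U_1$.
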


\begin{proof}
Since $D$ avoids the branch values of $\Theta_1$, the restriction
$\Theta_1|_{\widetilde D}\colon\widetilde D\to D$ is biholomorphic.
For a returning branch $\alpha\colon D\to D$, set
$\widetilde\alpha:=(\Theta_1|_{\widetilde D})^{-1}\circ
\alpha\circ\Theta_1|_{\widetilde D}$.
The identity $f_1^N\circ\alpha=\operatorname{id}_D$ implies
$\Theta_1\circ A_1^N\circ\widetilde\alpha=\Theta_1$. Since
$\widetilde D$ is connected, there is a unique
$\gamma_\alpha\in\Gamma_1$ such that
$A_1^N\circ\widetilde\alpha=\gamma_\alpha$.

Fix $z_0\in\widetilde D$. Since
$\widetilde\alpha(\widetilde D)\subset\widetilde D$, we have
$\gamma_\alpha(z_0)\in A_1^N(\widetilde D)$. The set
$\widetilde D$ is bounded, and if
$A_1^N(z)=a_1^Nz+b_{1,N}$, then
$|b_{1,N}|=O(|a_1|^N)$. Thus
$|\gamma_\alpha(z_0)|\leq C_0|a_1|^N$ for a constant independent of
$N$. Writing
$\gamma_\alpha(z)=\zeta_\alpha z+\lambda_\alpha$ and using the
finiteness of $\mathcal U_1$ gives
$|\lambda_\alpha|\leq C|a_1|^N$. Finally, equality of two
$\gamma_\alpha$'s
implies equality of the corresponding canonical
lifts and hence of the downstairs branches.
\end{proof}

\begin{lem}\label{lem:lattes-original-slope}
There is a finite-index subgroup $\Lambda_1'\subset\Lambda_1$ such
that $c\Lambda_1'\subset\Lambda_2$, where $c$ is the slope of the
affine lift $H(z)=cz+e$.
\end{lem}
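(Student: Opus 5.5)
We will compare, for each returning branch $\alpha\in\mathscr B_N$, the lifted intertwining relation with the affine lift $H(z)=cz+e$ of Lemma~\ref{lem:lattes-local-affine}. The relations that come from the translation parts of the deck transformations show that $c$ times a unit factor maps a large set of differences of lattice vectors $\lambda_\alpha-\lambda_\beta$ into $\Lambda_2$. A density argument then shows that these differences generate a finite-index subgroup of $\Lambda_1$.

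\emph{Step 1: the lifted relation.} Work with the $N$ fixed in Lemma~\ref{lem:charged-graph-return-configuration}, which we may take as large as needed in Lemma~\ref{lem:many-contracting-returns}. Fix $\alpha\in\mathscr B_N$ with canonical lift $\widetilde\alpha=A_1^{-N}\circ\gamma_\alpha$ from Lemma~\ref{lem:lattes-return-parameter-bound}. The identity $f_2^N\circ\varphi\circ\alpha=\varphi$ on $D$ lifts to
$$\Theta_2\circ A_2^N\circ H\circ A_1^{-N}\circ\gamma_\alpha=\Theta_2\circ H$$
on $\widetilde D$. Since $\widetilde D$ is connected, there is a unique $\delta_\alpha\in\Gamma_2$ with $A_2^N\circ H\circ A_1^{-N}\circ\gamma_\alpha=\delta_\alpha\circ H$. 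Write $\delta_\alpha(z)=\zeta'_\alpha z+\lambda'_\alpha$ and $A_i^N(z)=a_i^Nz+b_{i,N}$.

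\emph{Step 2: linear and constant parts.} Comparing linear parts gives $c\,a_2^Na_1^{-N}\zeta_\alpha=\zeta'_\alpha c$. Comparing constant parts gives
$$c\,a_2^Na_1^{-N}(\lambda_\alpha-b_{1,N})+a_2^Ne+b_{2,N}=\zeta'_\alpha e+\lambda'_\alpha .$$
Since $\mathcal U_1$ and $\mathcal U_2$ are finite, we partition $\mathscr B_N$ according to the pair $(\zeta_\alpha,\zeta'_\alpha)$. We keep a class $\mathscr B_N'$ of cardinality at least $\delta' d_1^N$, where $\delta':=\delta/(\#\mathcal U_1\#\mathcal U_2)$. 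For $\alpha,\beta\in\mathscr B_N'$, subtracting the two constant-part identities cancels the $e$, $b_{1,N}$ and $b_{2,N}$ terms, and gives
$$c\,a_2^Na_1^{-N}(\lambda_\alpha-\lambda_\beta)=\lambda'_\alpha-\lambda'_\beta\in\Lambda_2 .$$
To remove the factor $a_2^Na_1^{-N}$, we use the proof of Lemma~\ref{lem:lattes-local-affine}. There, $B_i=\gamma_i^{-1}\circ A_i^N$ has linear part $\zeta_{\gamma_i}^{-1}a_i^N$, and we showed $\lambda_1=\lambda_2$. Hence $a_2^Na_1^{-N}=\zeta_{\gamma_2}\zeta_{\gamma_1}^{-1}=:\omega$ is a product of elements of $\mathcal U_2$ and $\mathcal U_1^{-1}$. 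Consequently $c\,\omega L_N\subset\Lambda_2$, where $L_N$ is the subgroup of $\Lambda_1$ generated by the differences $\lambda_\alpha-\lambda_\beta$ with $\alpha,\beta\in\mathscr B_N'$.

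\emph{Step 3: finite index.} This is the main point. By Lemma~\ref{lem:lattes-return-parameter-bound}, the map $\alpha\mapsto\gamma_\alpha$ is injective. Within the class $\mathscr B_N'$ the rotation $\zeta_\alpha$ is fixed, so $\alpha\mapsto\lambda_\alpha$ is also injective. Thus we obtain at least $\delta' d_1^N=\delta'|a_1|^{2N}$ distinct points of $\Lambda_1$ in the disc of radius $C|a_1|^N$. That disc contains at most $C'|a_1|^{2N}$ lattice points, with $C'$ depending only on $C$ and the covolume of $\Lambda_1$. All these points lie in one coset $\lambda_{\alpha_0}+L_N$, so counting gives
$$\#\bigl(\text{points of one coset of }L_N\text{ in the disc}\bigr)\geq(\delta'/C')\cdot\#\bigl(\text{points of }\Lambda_1\text{ in the disc}\bigr).$$
We must rule out that $L_N$ has rank one or large index. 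If $L_N$ had rank $\leq1$, the coset would lie on a line and contain only $O(|a_1|^N)$ points of the disc, which contradicts the count for $N$ large. If $L_N$ has rank two and index $k$, then its cosets are equidistributed in large discs, up to a boundary error that must be controlled uniformly in the shape of $L_N$; this is the delicate part. We expect to handle it by first noting that $L_N$ contains two independent vectors of length $O(|a_1|^N)$. This follows because a positive-density set in a disc of radius $C|a_1|^N$ is not contained in a thin strip. A fundamental domain of $L_N$ can then be chosen of diameter $O(|a_1|^N)$, which gives $k\leq C'/\delta'+o(1)$. Hence $[\Lambda_1:L_N]$ is bounded independently of $N$.

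\emph{Step 4: conclusion.} Set $\Lambda_1':=\omega L_N$. Since $\omega$ is a product of elements of $\mathcal U_1^{-1}$ and $\mathcal U_2$, we need multiplication by $\omega$ to preserve the relevant lattices. Multiplication by $\zeta_{\gamma_1}^{-1}\in\mathcal U_1$ preserves $\Lambda_1$. For $\zeta_{\gamma_2}$, we instead absorb it on the $\Lambda_2$ side, since $\mathcal U_2$ preserves $\Lambda_2$: from $c\,\omega L_N\subset\Lambda_2$ we get $c\,\zeta_{\gamma_1}^{-1}L_N\subset\zeta_{\gamma_2}^{-1}\Lambda_2=\Lambda_2$. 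Then $\Lambda_1':=\zeta_{\gamma_1}^{-1}L_N$ is a finite-index subgroup of $\Lambda_1$ with $c\Lambda_1'\subset\Lambda_2$.
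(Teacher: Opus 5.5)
Your proposal is correct and follows essentially the paper's argument: lift each return relation, compare linear and constant parts, fix the rotation data by pigeonhole, use the $\gtrsim |a_1|^{2N}$ distinct translation parts in a disc of radius $O(|a_1|^N)$ to force rank two, and absorb the roots of unity at the end. The paper instead uses the fixed subgroup $\{\lambda\in\Lambda_1: c\xi\zeta^{-1}\lambda\in\Lambda_2\}$ along a subsequence $N_k$ rather than your $L_N$ at a single large $N$, which is an inessential difference.

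The ``delicate'' uniform index bound in Step 3 is not needed. A rank-two subgroup of $\Lambda_1$ automatically has finite index, and you only use one $N$. Moreover, the rank-one exclusion is already uniform in $N$, because points of a coset of a cyclic subgroup of $\Lambda_1$ are separated by the minimal length of $\Lambda_1$.
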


\begin{proof}
We first prove an auxiliary finite-index statement for a modified
slope; namely, there exists $c_0\in\mathbb C^*$ such that the subgroup
$\Lambda_1'':=\{\lambda\in\Lambda_1:c_0\lambda\in\Lambda_2\}$
has rank two. In particular, $\Lambda_1''$ has finite index in
$\Lambda_1$.

For arbitrarily large return times $N$, choose
$\mathscr B_N$ as in Lemma \ref{lem:many-contracting-returns}; thus
$|\mathscr B_N|\geq\delta d_1^N=\delta|a_1|^{2N}$. For
$\alpha\in\mathscr B_N$, write
$\gamma_\alpha(z)=\zeta_\alpha z+\lambda_\alpha$ as in Lemma
\ref{lem:lattes-return-parameter-bound}. Since $\mathcal U_1$ is
finite, for each such $N$ there exist
a subcollection
$\mathscr B_N'\subset\mathscr B_N$ and
$\zeta_N\in\mathcal U_1$ such that
$|\mathscr B_N'|\geq\delta'|a_1|^{2N}$ and
$\zeta_\alpha=\zeta_N$ for every
$\alpha\in\mathscr B_N'$, where $\delta'>0$ is independent of $N$.

For $\alpha\in\mathscr B_N'$, the identity
$f_2^N\circ\varphi\circ\alpha=\varphi$ lifts to
$A_2^N\circ H\circ A_1^{-N}\circ\gamma_\alpha
=\delta_\alpha\circ H$ for some
$\delta_\alpha\in\Gamma_2$. Write
$\delta_\alpha(z)=\xi_\alpha z+\mu_\alpha$, with
$\xi_\alpha\in\mathcal U_2$ and $\mu_\alpha\in\Lambda_2$.
Comparing linear parts and using $H(z)=cz+e$ gives
$(a_2^N/a_1^N)\zeta_N=\xi_\alpha$. Thus $\xi_\alpha$ is independent
of $\alpha$; denote it by $\xi_N$.

The pairs $(\zeta_N,\xi_N)$ lie in the finite set
$\mathcal U_1\times\mathcal U_2$. Passing to a sequence of return
times $N_k\to\infty$, we may therefore assume that
$\zeta_{N_k}=\zeta$ and $\xi_{N_k}=\xi$ are fixed. Set
$c_0:=c\xi\zeta^{-1}$ and
$\Lambda_1'':=\{\lambda\in\Lambda_1:c_0\lambda\in\Lambda_2\}$.
This is now a single subgroup, independent of $k$.

For every $k$ and
$\alpha,\beta\in\mathscr B_{N_k}'$, subtracting the constant terms
in the two lifted identities gives
$c_0(\lambda_\alpha-\lambda_\beta)
=\mu_\alpha-\mu_\beta\in\Lambda_2$. Hence the parameters
$\lambda_\alpha$, $\alpha\in\mathscr B_{N_k}'$, lie in one translate
of the fixed subgroup $\Lambda_1''$. They are distinct and, by Lemma
\ref{lem:lattes-return-parameter-bound}, satisfy
$|\lambda_\alpha|\leq C|a_1|^{N_k}$.

If $\Lambda_1''$ has rank zero, each translate contains only one
point, which is impossible for large $k$. Suppose that
$\Lambda_1''$ has rank one. Since it is a fixed discrete rank-one
subgroup of $\mathbb C$, there is $C_1>0$ such that every translate
of $\Lambda_1''$ contains at most $C_1(1+R)$ points in $D(0,R)$.
Therefore
$|\mathscr B_{N_k}'|\leq C_2|a_1|^{N_k}$ for a constant $C_2$
independent of $k$. This contradicts
$|\mathscr B_{N_k}'|\geq\delta'|a_1|^{2N_k}$ as $k\to\infty$.
Thus $\Lambda_1''$ has rank two, and hence finite index in
$\Lambda_1$.

\medskip

By the standard crystallographic presentation,
$\mathcal U_i$ preserves $\Lambda_i$. Hence
$\zeta\Lambda_1=\Lambda_1$ and
$\xi\Lambda_2=\Lambda_2$. Set
$\Lambda_1':=\zeta^{-1}\Lambda_1''$. This subgroup has finite index
in $\Lambda_1$. If $\lambda''=\zeta^{-1}\lambda$ with
$\lambda\in\Lambda_1''$, then
$\xi\zeta^{-1}c\lambda\in\Lambda_2$. Applying $\xi^{-1}$ and using
commutativity of multiplication in $\mathbb C$ gives
$c\lambda''\in\Lambda_2$.
\end{proof}

\begin{proof}[Proof of Proposition
\ref{prop:lattes-finite-algebraization}]
By Lemma \ref{lem:lattes-propagation}, both $f_1$ and $f_2$ are
Latt\`es. Lemma \ref{lem:lattes-local-affine} gives an affine lift
$H(z)=cz+e$ of the charged local graph. Lemma
\ref{lem:lattes-original-slope} gives a finite-index subgroup
$\Lambda_1'\subset\Lambda_1$ such that
$c\Lambda_1'\subset\Lambda_2$. 

Now we show that the charged local graph $\Gamma_{\varphi|_D}$ is contained in an
irreducible algebraic curve
$C\subset\mathbb P^1\times\mathbb P^1$, and $\nu(C)>0$. Set $E_i:=\mathbb C/\Lambda_i$, let
$p_i\colon\mathbb C\to E_i$ be the quotient map, and write
$\Theta_i=\pi_i\circ p_i$ with
$\pi_i\colon E_i\to\mathbb P^1$ finite. Set
$E_1':=\mathbb C/\Lambda_1'$. The inclusion
$\Lambda_1'\subset\Lambda_1$ induces an isogeny
$q\colon E_1'\to E_1$, while $H(z)=cz+e$ descends, by Lemma
\ref{lem:lattes-original-slope}, to a holomorphic map
$h\colon E_1'\to E_2$. Define
$\Xi:=(\pi_1\circ q,\pi_2\circ h)\colon
E_1'\to\mathbb P^1\times\mathbb P^1$ and set
$C:=\Xi(E_1')$. Since $E_1'$ is a compact irreducible
algebraic curve, $C$ is an irreducible algebraic curve.
On $\widetilde D$ we have
$\Theta_2\circ H=\varphi\circ\Theta_1$, so $C$ contains the graph of
$\varphi$ over $D$. As in the proof of Proposition
\ref{prop:finite-case-algebraization}, the positive-measure set $K$
from Lemma \ref{lem:uniform-charged-inverse-box} projects into this
graph. Hence $\nu(C)>0$.

It remains to show that there exists a return time $N\ge1$ such that $F^N(C)=C$. By construction, $C$ contains $\Gamma_{\varphi|_D}$. Choose a sufficiently large return time $N$ and a corresponding
branch $\alpha\in\mathscr B_N$ from Lemma
\ref{lem:many-contracting-returns}.
Then
$\alpha(D)\Subset D$, $f_1^N\circ\alpha=\operatorname{id}_D$, and
$f_2^N\circ\varphi\circ\alpha=\varphi$ on $D$.

For every $x\in D$, the point
$(\alpha(x),\varphi(\alpha(x)))$ belongs to $C$, and its image under
$F^N$ is $(x,\varphi(x))$. Hence
$\Gamma_{\varphi|_D}\subset F^N(C)$. The image $F^N(C)$ is an
irreducible algebraic curve. Since $C$ and $F^N(C)$ contain the same
non-empty analytic graph, they coincide. Thus $F^N(C)=C$.
\end{proof}

\subsection{Classification and conclusion of the proof of Theorem \ref{thm:Cnu-finite}}
\label{subsec:finite-classification}

\begin{prop}\label{prop:finite-case-classification}
Let $C\subset\mathbb P^1\times\mathbb P^1$ be an irreducible
algebraic curve with $\nu(C)>0$ and $F^N(C)=C$ for some $N\geq1$.
Then exactly one of alternatives \textup{(A1)} and \textup{(A2)}
holds.
\end{prop}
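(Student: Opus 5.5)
We split into cases according to the generic degrees $d_1:=\deg(\pi_1|_C)$ and $d_2:=\deg(\pi_2|_C)$. First, neither projection of $C$ is constant. If $\pi_1(C)$ were a point $\{x\}$, then $\nu(C)\le\mu_1(\{x\})=0$, since $\mu_1$ is non-atomic. The same argument with $\mu_2$ excludes $\pi_2(C)$ being a point. So both projections are dominant.

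\emph{Case $d_1,d_2\ge2$.} This is exactly the definition of joining-exceptional, with $n=N$, and $C$ has $\nu(C)>0$. So (A2) holds.

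\emph{Case $d_1=1$ or $d_2=1$.} Say $d_1=1$; the other case is symmetric. Then $C$ is the graph of a rational map $A\colon\mathbb P^1\to\mathbb P^1$, namely $C=\{(x,A(x))\}$, because an irreducible curve of bidegree $(\cdot,1)$ over the first factor is birational to $\mathbb P^1$ via $\pi_1$. The condition $F^N(C)=C$ means $(f_1^N(x),f_2^N(A(x)))$ lies on $C$, so $f_2^N\circ A=A\circ f_1^N$. We now build the cycle.
\begin{itemize}
\item[(i)] Set $C_j:=F^j(C)$ for $0\le j<N$. Each $C_j$ is an irreducible curve invariant under $F^N$. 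Let $r\mid N$ be the exact period of $C$ under $F$.
\item[(ii)] Put $Y:=\bigcup_{j<r}C_j$. Then $F^{-1}(Y)\supset Y$ and $\nu(Y)>0$. By $F$-invariance, $\nu(F^{-1}Y\setminus Y)=0$, so $Y$ is invariant mod $\nu$, and ergodicity gives $\nu(Y)=1$.
\item[(iii)] Distinct irreducible curves meet in finitely many points, and $\nu$ is non-atomic because its marginals are. So $\nu$ decomposes as a sum of the restrictions $\nu|_{C_j}$.
\item[(iv)] $F$ maps $C_j$ onto $C_{j+1}$, and the overlaps are finite sets. Hence $F_*(\nu|_{C_j})=\nu|_{C_{j+1}}$ mod finite sets, and all the masses are equal to $1/r$.
\item[(v)] Set $\nu_i:=r\,\nu|_{C_i}$. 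The only remaining question is whether each $C_i$ is a graph or a transpose graph of some $A_i$ or $B_i$ satisfying the commutation with $f^r$.
\end{itemize}

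For step (v), $\pi_1(F(C'))=f_1(\pi_1 C')$. We compare generic degrees: $\deg\pi_1|_{F(C')}\cdot\deg(F|_{C'})=\deg f_1\cdot\deg\pi_1|_{C'}$, and similarly for $\pi_2$. Write $e_j:=\deg(F|_{C_j})$. Multiplying these relations around the cycle, we get $\prod_j e_j=(\deg f_1)^r$ from the $\pi_1$ data and $\prod_j e_j=(\deg f_2)^r$ from the $\pi_2$ data, so $\deg f_1=\deg f_2$. The ratio $\deg\pi_1|_{C_{j+1}}/\deg\pi_1|_{C_j}=\deg f_1/e_j$, and the analogous $\pi_2$ ratio uses the same $e_j$. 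Hence the quotient $\deg\pi_1|_{C_j}/\deg\pi_2|_{C_j}$ is constant along the cycle. This does not by itself force $\deg\pi_1|_{C_j}=1$ for every $j$. I expect this to be the main obstacle: showing that the property of being a graph persists.

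The first thing to try is a measure argument. On a graph $C_0$ over the first factor, $\nu_0=(\mathrm{id},A)_*\mu_1$, so $(\pi_1)_*\nu_0=\mu_1$. If some $C_j$ had $\deg\pi_1|_{C_j}>1$ and $\deg\pi_2|_{C_j}>1$, we would use that $F^j|_{C_0}\colon C_0\to C_j$ pushes $\nu_0$ to $\nu_j$. That map is a composition of graph maps, so $C_j$ is parametrized as $x\mapsto(f_1^j x,f_2^jA x)$, and $\pi_1|_{C_j}$ is then a factor of $f_1^j$. I would try to show this forces $C_j$ to be a graph of $A_j$ with $f_2^jA=A_jf_1^j$. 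The idea is to use that $\nu_j$ has marginal $\mu_1$ and that on a curve with $\deg\pi_1>1$ the conditional measures would need positive entropy. If this fails, the fallback is to note that $C_j$ and $C_0$ play symmetric roles under $F^{r}$. We would then classify the cycle by the period-$r$ relation directly: each $C_i$ is $F^r$-invariant, and the degree-ratio constancy gives $\deg\pi_1|_{C_i}=1$ whenever $\deg\pi_1|_{C_0}=1$ and the $e_j$ all equal $\deg f_1$, which holds for graph-to-graph transitions. We still have to rule out mixed transitions by a separate argument.

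\emph{Exclusivity.} Suppose (A1) and (A2) both held. In (A1), $\nu$ is carried by finitely many graph or transpose-graph curves. In (A2), $\nu(C')>0$ for an invariant curve $C'$ whose two projections both have degree $>1$. Then $C'$ would have to coincide with one of the (A1) curves, since distinct irreducible curves meet in a finite, $\nu$-null set. That is impossible, because the (A1) curves have a projection of degree one.
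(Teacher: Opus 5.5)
Your preliminary steps are sound and match the paper. Both projections are dominant by non-atomicity. The orbit $C_0,\dots,C_{r-1}$ satisfies $\nu(\bigcup_j C_j)=1$ by ergodicity, and $F_*(\nu|_{C_j})=\nu|_{C_{j+1}}$ with $\nu(C_j)=1/r$. Your exclusivity argument is the paper's.

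The gap is in the case $\deg(\pi_1|_C)=1$ or $\deg(\pi_2|_C)=1$. There you try to prove that \textup{(A1)} must hold, which would require every $C_j=F^j(C)$ to be a graph or a transpose graph. You leave exactly this open: the degree bookkeeping only shows that $\deg\pi_1|_{C_j}/\deg\pi_2|_{C_j}$ is constant along the cycle, and the conditional-entropy idea is never carried out. So, as written, nothing is concluded when $C$ is a graph but some $C_j$ has both projection degrees greater than one.

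The missing observation is that this situation need not be excluded, since the statement only asks for one of the two alternatives. Suppose some $C_j$ has both projection degrees $>1$. Then $C_j$ is irreducible, $F^r(C_j)=C_j$, $\nu(C_j)=1/r>0$, and its projections are dominant by the same non-atomicity argument. So $C_j$ itself witnesses \textup{(A2)}.

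Hence the case distinction should be made over the whole cycle, not over $C$ alone. Either some $C_j$ has both degrees $>1$, giving \textup{(A2)}, or every $C_j$ is a graph $\Gamma_{A_j}$ or a transpose graph, giving \textup{(A1)}. For the same reason you need not rule out ``mixed transitions'': \textup{(A1)} allows each $\nu_i$ independently to be of graph or transpose-graph type.

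Finally, you assert $\nu_j=(\operatorname{id},A_j)_*\mu_1$ without proof, and it needs a line of justification. The first marginal of $\nu_j=r\,\nu|_{C_j}$ is $f_1^r$-invariant, because $(F^r)_*\nu_j=\nu_j$. It is also bounded by $r\mu_1$, and $\mu_1$ is ergodic for $f_1^r$ since it is mixing, so this marginal equals $\mu_1$.
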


\begin{proof}
Both coordinate projections of $C$ are dominant. Indeed, if
$\pi_1(C)$ were a point, then $C$ would be vertical and
$\nu(C)\leq\mu_1(\{a\})=0$, contradicting $\nu(C)>0$; the same
argument applies to $\pi_2$.

If both projection degrees are greater than one, $C$ gives
alternative \textup{(A2)}. 
We now assume that at least one
projection has degree one.

Let $r\ge1$ be minimal with $F^r(C)=C$, and set
$C_j:=F^j(C)$ for $0\le j<r$. The curves $C_j$ are distinct and
$F$ cyclically permutes them. Set $\mathcal U:=\bigcup_{j=0}^{r-1}C_j$.
Since $F(\mathcal U)=\mathcal U$ and $\nu(\mathcal U)\ge\nu(C)>0$,
invariance and ergodicity of $\nu$ give $\nu(\mathcal U)=1$.

The joining $\nu$ is non-atomic as its marginals are
non-atomic. Since distinct irreducible curves meet in finitely many
points, the intersections $C_j\cap C_k$, for $j\ne k$, are
$\nu$-null. Since $\nu(\mathcal U)=1$ and, modulo the finite intersections of
distinct curves, the only component of $\mathcal U$ mapped into
$C_{j+1}$ is $C_j$, invariance of $\nu$ gives
$F_*(\nu|_{C_j})=\nu|_{C_{j+1\!\!\!\pmod r}}$. Consequently,
$\nu(C_j)=1/r$ for every $j$.

If some $C_j$ has both projection degrees greater than one, then
$F^r(C_j)=C_j$ and alternative (A2) holds. Otherwise, for every $j$,
at least one coordinate projection of $C_j$ has degree one. Hence
$C_j$ is either the graph of a rational map or the transpose of such
a graph.

Set $\nu_j:=r\,\nu|_{C_j}$. Then each $\nu_j$ is a probability
measure, $F_*\nu_j=\nu_{j+1\!\!\!\pmod r}$, and
$(F^r)_*\nu_j=\nu_j$. Suppose that $C_j=\Gamma_{A_j}$. The first
marginal of $\nu_j$ is an $f_1^r$-invariant probability measure
absolutely continuous with respect to $\mu_1$. Since $\mu_1$ is
mixing, and hence ergodic for $f_1^r$, this marginal equals $\mu_1$.
Therefore $\nu_j=(\operatorname{id},A_j)_*\mu_1$. Moreover,
$F^r(C_j)=C_j$ gives
$f_2^r\circ A_j=A_j\circ f_1^r$.

If $C_j$ is the transpose of the graph of a rational map $B_j$, the
same argument using the second marginal gives
$\nu_j=(B_j,\operatorname{id})_*\mu_2$ and
$f_1^r\circ B_j=B_j\circ f_2^r$. Finally,
$\nu=\frac1r\sum_{j=0}^{r-1}\nu_j$, so these measures give precisely
the cyclic decomposition in alternative (A1).

Finally, alternatives \textup{(A1)} and \textup{(A2)} are mutually
exclusive. Indeed, in \textup{(A1)} the joining is supported on a
finite union of graph or transpose-graph curves. An irreducible curve
as in \textup{(A2)} has finite intersection with each such component
unless they coincide, while coincidence is impossible as both
projection degrees in \textup{(A2)} are greater than one. Since
$\nu$ is non-atomic, such a curve cannot have positive $\nu$-measure.
\end{proof}

\begin{proof}[Proof of Theorem \ref{thm:Cnu-finite}]
If neither $f_1$ nor $f_2$ is Latt\`es, Proposition
\ref{prop:finite-case-algebraization} gives an invariant irreducible
algebraic curve of positive joining measure. If one of $f_1,f_2$ is
Latt\`es, Proposition \ref{prop:lattes-finite-algebraization} gives
such a curve. In either case, Proposition
\ref{prop:finite-case-classification} gives exactly one of
alternatives \textup{(A1)} and \textup{(A2)}.
\end{proof}

\section{Joinings without charged local holomorphic graphs}
\label{sec:Cnu-infinite}
We now treat joinings for which no local biholomorphic graph has
positive joining measure. The main result is the following theorem.

\begin{thm}\label{thm:Cnu-infinite}
Let $\nu$ be an ergodic joining of $f_1$ and $f_2$. Suppose that
$\nu(\Gamma_\varphi)=0$ for every biholomorphism
$\varphi\colon U'\to V'$ between non-empty open subsets of
$\mathbb P^1$. Then $\mathcal C_\nu$ is infinite, and
the family
$$
\mathscr H_\nu
:=
\left\{
\chi_b^{-1}
\circ\widetilde\Psi\circ
\chi_a
:
\widetilde\Psi\in\overline{\mathcal C_\nu}
\right\}
\subset\operatorname{Hol}(U,V)
$$
is compact and non-discrete. Every element of $\mathscr H_\nu$ is
non-constant and is locally biholomorphic after restriction to a
non-empty open subset.
\end{thm}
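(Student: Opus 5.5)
The argument has three parts: infiniteness of $\mathcal C_\nu$ by contraposition, compactness and non-discreteness of the closure, and non-degeneracy of its elements. Throughout, the reference data $E^\sharp,a,b,\chi_a,\chi_b,U,V,r^\sharp,R^\sharp$ are those fixed in Proposition~\ref{prop:reference-charts}.

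\textbf{Infiniteness.} I argue by contraposition. Suppose $\mathcal C_\nu$ is finite. Lemma~\ref{lem:Cnu-finite-admissible} then gives a $\nu$-admissible germ $\widetilde\Psi\in\mathcal A_\nu$. Proposition~\ref{prop:realization-admissible-germs} shows that $\varphi_{\widetilde\Psi}=\chi_b^{-1}\circ\widetilde\Psi\circ\chi_a$ is holomorphic on $U$ and satisfies $\nu(\Gamma_{\varphi_{\widetilde\Psi}})>0$. It also shows that $\varphi_{\widetilde\Psi}$ is biholomorphic near each point $x_0$ with $(x_0,y_0)\in\pi(E^\sharp_{\widetilde\Psi})$.

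A local biholomorphism near each point is not yet a single biholomorphism $\varphi\colon U'\to V'$, so I need to pass to one. The set $\{x\in U:\varphi_{\widetilde\Psi}'(x)\neq0\}$ is open and contains the first projection of $\pi(E^\sharp_{\widetilde\Psi})$. Cover it by countably many discs on each of which $\varphi_{\widetilde\Psi}$ is injective. By countable subadditivity, the graph over one of these discs has positive $\nu$-measure. This contradicts the hypothesis, so $\mathcal C_\nu$ is infinite.

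\textbf{Compactness and non-discreteness.} The closure $\overline{\mathcal C_\nu}$ is a closed subset of the compact metrizable space $\mathcal K$, hence compact. It is an infinite subset of a compact metric space, so it has an accumulation point. Therefore $\overline{\mathcal C_\nu}$ is not discrete.

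Now transport to $\operatorname{Hol}(U,V)$. The map $\widetilde\Psi\mapsto\chi_b^{-1}\circ\widetilde\Psi\circ\chi_a$ is well defined because $\overline{\chi_a(U)}\Subset D(0,r^\sharp)$. It is injective because $\chi_a(U)$ is open and we may use the identity theorem. It is continuous for the compact-open topology, since $\chi_b^{-1}$ is uniformly continuous on compact subsets of $\chi_b(V)$. One point needs care: the values must land in $V$. Every element of $\mathcal K$ maps into $\overline{D(0,R^\sharp)}$, and by the maximum principle non-constant limits map into $D(0,R^\sharp)\Subset\chi_b(V)$, so this holds once non-constancy is known. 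Given that, the image $\mathscr H_\nu$ is compact, and it is non-discrete because the continuous injection of a compact space is a homeomorphism onto its image.

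\textbf{Non-constancy; main obstacle.} The hard step is showing that every element of the closure, not just every cluster limit, is non-constant. This does not follow formally, because the derivative bound in Proposition~\ref{prop:reference-charts}(4) is taken at the moving point $q_z=\chi_a(x_0)$. I plan the following argument. Let $\widetilde\Psi_j\in C(z_j)$ converge to $\widetilde\Psi$. Then $|\widetilde\Psi_j'(q_{z_j})|\geq c_*$, and $q_{z_j}\in\chi_a(U)$, which is relatively compact in $D(0,r^\sharp)$. Passing to a subsequence, $q_{z_j}\to q\in\overline{\chi_a(U)}\subset D(0,r^\sharp)$. Locally uniform convergence of derivatives then gives $|\widetilde\Psi'(q)|\geq c_*>0$. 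Hence $\widetilde\Psi$ is non-constant, its derivative is nonvanishing on a neighbourhood of $q$, and it is injective on a smaller disc there. So $\widetilde\Psi$ is locally biholomorphic after restriction to a non-empty open subset.

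Transferring through the charts gives the same conclusion for $\chi_b^{-1}\circ\widetilde\Psi\circ\chi_a$. If $q$ lies on the boundary of $\chi_a(U)$, I use that the open set where $\widetilde\Psi'\neq0$ is a neighbourhood of $q$ and therefore meets $\chi_a(U)$.
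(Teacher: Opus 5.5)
Your proposal is correct and follows essentially the same route as the paper. It argues infiniteness by contradiction via Lemma~\ref{lem:Cnu-finite-admissible} and Proposition~\ref{prop:realization-admissible-germs}, using a countable cover by injectivity discs. It obtains compactness and non-discreteness of $\overline{\mathcal C_\nu}$ in $\mathcal K$, transported by the continuous injective chart map. It proves non-constancy through the moving-point derivative bound $|\widetilde\Psi'(q)|\geq c_*$ at a limit point $q\in\overline{\chi_a(U)}$, which the paper packages as Lemma~\ref{lem:Cnu-closure-nonconstant}, including the maximum-principle step ensuring values land in $V$.
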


\begin{rmk}\label{rmk:product-A2}
Theorem \ref{thm:Cnu-infinite} also applies to the product joining,
which is mixing and hence ergodic. Indeed, since $\mu_2$ is
non-atomic, Fubini's theorem shows that
$(\mu_1\times\mu_2)(\Gamma_\varphi)=0$ for every local measurable
map $\varphi$. Thus the product joining satisfies the hypothesis of
Theorem \ref{thm:Cnu-infinite}, which produces a compact
non-discrete family of local holomorphic maps.

For the product joining this phenomenon is automatic from the
transfer-map construction and does not reflect any dependence between
the two coordinates. For the non-product joinings considered in
Theorem \ref{thm_main}, the reference set $E^\sharp$ can be chosen
to satisfy \eqref{eq:25-extra}, so that the reference data themselves
witness a genuine dependence between the two coordinates.
\end{rmk}

\begin{lem}\label{lem:Cnu-closure-nonconstant}
Assume that $\mathcal C_\nu$ is infinite. Then
$\overline{\mathcal C_\nu}\subset\mathcal K$
is compact and non-discrete. Moreover, every
$\widetilde\Psi\in\overline{\mathcal C_\nu}$ is a non-constant
holomorphic map
$
\widetilde\Psi\colon
D(0,r^\sharp)\to D(0,R^\sharp),
$
and is locally biholomorphic after restriction to a non-empty open
subset of $\chi_a(U)$.
\end{lem}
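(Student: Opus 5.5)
The plan has three steps: compactness, non-discreteness, and the derivative bound for the limit maps.

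\emph{Compactness.} I would use that $\overline{\mathcal C_\nu}$ is a closed subset of $\mathcal K$. The set $\mathcal K$ is compact and metrizable in the compact-open topology by Montel's theorem, since all maps $\widetilde\Phi_{z,n}$ take values in the bounded disc $D(0,R^\sharp)$. Hence $\overline{\mathcal C_\nu}$ is compact.

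\emph{Non-discreteness.} I would argue by contradiction. If $\overline{\mathcal C_\nu}$ were discrete, then, being a compact metrizable space, it would be finite. Then $\mathcal C_\nu\subset\overline{\mathcal C_\nu}$ would be finite, contradicting the assumption. So $\overline{\mathcal C_\nu}$ is infinite and, by compactness, has an accumulation point.

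\emph{Non-constancy and local biholomorphy.} Let $\widetilde\Psi\in\overline{\mathcal C_\nu}$, and choose $\widetilde\Psi_k\in C(z_k)$ with $z_k\in E^\sharp_{\rm rec}$ and $\widetilde\Psi_k\to\widetilde\Psi$ locally uniformly. By Proposition \ref{prop:reference-charts}(4), $|\widetilde\Psi_k'(q_{z_k})|\geq c_*$, where $q_{z_k}=\chi_a(x_0^{(k)})\in\chi_a(U)$. By Proposition \ref{prop:reference-charts}(1), $\overline{\chi_a(U)}$ is a compact subset of $D(0,r^\sharp)$. Passing to a subsequence, $q_{z_k}\to q\in\overline{\chi_a(U)}$.

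Local uniform convergence of holomorphic maps gives local uniform convergence of their derivatives. Combined with the convergence of the points $q_{z_k}$ inside a compact subset of the domain, this gives $\widetilde\Psi'(q)=\lim_k\widetilde\Psi_k'(q_{z_k})$. Hence $|\widetilde\Psi'(q)|\geq c_*>0$, so $\widetilde\Psi$ is non-constant. The values of $\widetilde\Psi$ lie in $D(0,R^\sharp)$ by the maximum principle, as in Proposition \ref{prop:reference-charts}(3).

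By continuity, $\widetilde\Psi'\neq0$ on a neighbourhood of $q$. The main obstacle is that $q$ may lie on $\partial\chi_a(U)$ rather than in $\chi_a(U)$. I would handle this as follows. Since $\chi_a(U)$ is open and $q$ lies in its closure, every neighbourhood of $q$ meets $\chi_a(U)$ in a non-empty open set. Intersecting the neighbourhood where $\widetilde\Psi'\neq0$ with $\chi_a(U)$ gives a non-empty open subset of $\chi_a(U)$. By the inverse function theorem, shrinking this set if necessary, $\widetilde\Psi$ is biholomorphic onto its image there.

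If one only wants the statement for a non-constant map, there is an alternative: zeros of $\widetilde\Psi'$ are isolated, so $\widetilde\Psi'$ is non-vanishing on a dense open subset of $\chi_a(U)$.
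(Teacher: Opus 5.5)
Your proposal is correct and follows essentially the same route as the paper: compactness as a closed subset of $\mathcal K$, non-discreteness from infiniteness, the lower bound $|\widetilde\Psi'(q)|\geq c_*$ at a limit $q\in\overline{\chi_a(U)}$ of the base points $q_{z_k}$, and then intersecting the inverse-function-theorem neighbourhood of $q$ with $\chi_a(U)$. Your observation that a discrete compact space is finite is just a slightly more explicit version of the paper's one-line non-discreteness argument.
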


\begin{proof}
Since $\mathcal K$ is compact and metrizable,
$\overline{\mathcal C_\nu}$ is compact. As $\mathcal C_\nu$ is
infinite, its closure has an accumulation point and is therefore
non-discrete.
It remains to prove the assertions about the elements of
$\overline{\mathcal C_\nu}$. Let
$\widetilde\Psi_j\in\mathcal C_\nu$ converge locally uniformly on
$D(0,r^\sharp)$ to
$\widetilde\Psi\in\overline{\mathcal C_\nu}$.
For each $j$, choose
$z_j=(\hat x_j,\hat y_j)\in E^\sharp_{\rm rec}$ such that
$\widetilde\Psi_j\in C(z_j)$, and set
$q_j:=\chi_a(x_{j,0})$.
By Proposition \ref{prop:reference-charts} (1), the points $q_j$
belong to the compact set
$\overline{\chi_a(U)}\Subset D(0,r^\sharp)$.
After passing to a subsequence, we may assume that
$q_j\to q\in\overline{\chi_a(U)}$.

Since $\widetilde\Psi_j\in C(z_j)$, Proposition
\ref{prop:reference-charts} (4) gives
$|\widetilde\Psi_j'(q_j)|\geq c_*$ for every $j$.
Local uniform convergence of holomorphic maps implies local uniform
convergence of their derivatives on compact subsets. Hence
$\widetilde\Psi_j'(q_j)\to\widetilde\Psi'(q)$, and therefore
$
|\widetilde\Psi'(q)|\geq c_*>0.
$
Thus $\widetilde\Psi$ is non-constant. By the inverse function
theorem, $\widetilde\Psi$ is biholomorphic on a neighbourhood $W$ of
$q$. Since $q\in\overline{\chi_a(U)}$, the open set
$W\cap\chi_a(U)$ is non-empty, so $\widetilde\Psi$ is locally
biholomorphic on a non-empty open subset of $\chi_a(U)$.

Finally, every $\widetilde\Psi_j$ takes values in
$D(0,R^\sharp)$. Hence the locally uniform limit
$\widetilde\Psi$ takes values in
$\overline{D(0,R^\sharp)}$. Since $\widetilde\Psi$ is non-constant,
the maximum principle shows that
$
\widetilde\Psi(D(0,r^\sharp))\subset D(0,R^\sharp).
$
This proves the lemma.
\end{proof}

\begin{proof}[Proof of Theorem \ref{thm:Cnu-infinite}]
Suppose, toward a contradiction, that $\mathcal C_\nu$ is finite.
By Lemma \ref{lem:Cnu-finite-admissible}, there exists
$\widetilde\Psi\in\mathcal A_\nu$. Let
$\varphi:=\varphi_{\widetilde\Psi}$ be the corresponding holomorphic
map given by Proposition
\ref{prop:realization-admissible-germs}.

For every
$z=(\hat x,\hat y)\in E^\sharp_{\widetilde\Psi}$,
Proposition \ref{prop:realization-admissible-germs} gives
$\varphi'(x_0)\neq0$. Hence, by the inverse function theorem, there
is an open neighbourhood $U_z$ of $x_0$ on which $\varphi$ is
biholomorphic onto its image. By second countability, there are
countably many such open sets $U_j, j\geq 1$,
whose union contains $x_0$ for every
$z=(\hat x,\hat y)\in E^\sharp_{\widetilde\Psi}$. Therefore
$$
E^\sharp_{\widetilde\Psi}
\subset
\bigcup_{j\geq1}
\pi^{-1}\bigl(\Gamma_{\varphi|_{U_j}}\bigr).
$$
Since
$\hat\nu(E^\sharp_{\widetilde\Psi})>0$, there exists $j$ such that
$
\hat\nu\!\left(
\pi^{-1}\bigl(\Gamma_{\varphi|_{U_j}}\bigr)
\right)>0.
$
As $\pi_*\hat\nu=\nu$, this gives
$\nu(\Gamma_{\varphi|_{U_j}})>0$, contradicting the hypothesis that
$\nu$ charges no graph of a local biholomorphism. Hence
$\mathcal C_\nu$ is infinite.
By Lemma
\ref{lem:Cnu-closure-nonconstant}, the closure
$\overline{\mathcal C_\nu}
\subset\mathcal K$
is compact and non-discrete. Moreover, every
$\widetilde\Psi\in\overline{\mathcal C_\nu}$
is non-constant and is
locally biholomorphic after restriction to a non-empty open subset 
of $\chi_a(U)$.

For
$\widetilde\Psi\in\overline{\mathcal C_\nu}$, 
define
$
\varphi_{\widetilde\Psi}
:= \chi_b^{-1}
\circ\widetilde\Psi\circ
\chi_a$.
By Proposition \ref{prop:reference-charts}, these maps are defined on
the fixed reference neighbourhood $U$ and take values in $V$.
Hence 
$\mathscr H_\nu=
\{
\varphi_{\widetilde\Psi}:
\widetilde\Psi\in\overline{\mathcal C_\nu}
\}
\subset\operatorname{Hol}(U,V)$. Consider
the map
$$
\Theta\colon
\overline{\mathcal C_\nu}
\to
\operatorname{Hol}(U,V),
\qquad
\Theta(\widetilde\Psi)
= \varphi_{\widetilde \Psi}
=
\chi_b^{-1}
\circ\widetilde\Psi\circ
\chi_a.
$$
Here $\widetilde\Psi$ is restricted to the relatively compact domain
$\chi_a(U)\Subset D(0,r^\sharp)$. The map $\Theta$ is continuous for
the compact-open topologies. It is also injective: if
$\Theta(\widetilde\Psi_1)=\Theta(\widetilde\Psi_2)$ on $U$, then
$\widetilde\Psi_1=\widetilde\Psi_2$ on the non-empty open set
$\chi_a
(U)$, hence on $D(0,r^\sharp)$ by the identity theorem.

Since $\overline{\mathcal C_\nu}$ is compact and
$\operatorname{Hol}(U,V)$ is Hausdorff, $\Theta$ is a homeomorphism
onto its image. Therefore
$
\mathscr H_\nu=\Theta(\overline{\mathcal C_\nu})
$
is compact and non-discrete.
Moreover, every $h\in\mathscr H_\nu$ is non-constant. Since every
$\widetilde\Psi \in \overline{\mathcal C_\nu}$ is locally biholomorphic
after restriction to a non-empty open subset of
$\chi_a(U)$,
the same is true of the
corresponding map
$h=
\chi_b^{-1}
\circ\widetilde\Psi\circ
\chi_a$.
This completes the proof.
\end{proof}

\section{Proof of Theorem \ref{thm_main}}
\label{sec:pf-main-thm}
We can now complete the proof of Theorem \ref{thm_main}.

\begin{proof}[Proof of Theorem \ref{thm_main}]
If $\nu$ charges the graph of a local biholomorphism, then case
\textup{(A)} holds, and Theorem \ref{thm:Cnu-finite} gives exactly
one of alternatives \textup{(A1)} and \textup{(A2)}. Otherwise case
\textup{(B)} holds, and Theorem \ref{thm:Cnu-infinite} gives the
asserted compact non-discrete family.
\end{proof}

Theorem \ref{thm_main} gives an immediate consequence when the
degrees are different.

\begin{cor}\label{cor:different-degrees}
Assume that $\deg f_1\neq\deg f_2$. Then every non-product ergodic
joining of $f_1$ and $f_2$ belongs to case \textup{(B)} of
Theorem \ref{thm_main}.
\end{cor}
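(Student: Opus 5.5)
The plan is to show directly that neither algebraic alternative of case (A) can occur when $d_1:=\deg f_1\neq d_2:=\deg f_2$. Theorem \ref{thm_main} then places every non-product ergodic joining in case (B). Indeed, if $\nu$ charged a local biholomorphic graph, Theorem \ref{thm:Cnu-finite} would give an irreducible curve $C$ with $F^N(C)=C$ and $\nu(C)>0$, together with (A1) or (A2). So it suffices to prove the following claim: if $C\subset\mathbb P^1\times\mathbb P^1$ is irreducible, $F^n(C)=C$, and both projections $\pi_1|_C,\pi_2|_C$ are dominant, then $d_1=d_2$. This claim excludes (A2) directly. It also excludes (A1), because a graph $\Gamma_{A_i}$ with $f_2^r\circ A_i=A_i\circ f_1^r$, or a transpose graph, is such a curve.

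For the graph case in (A1) there is an easy degree count. Taking degrees in $f_2^r\circ A_i=A_i\circ f_1^r$ gives $d_2^r\deg A_i=\deg A_i\, d_1^r$. Here $A_i$ is non-constant, since otherwise $\nu_i$ would be a point mass in the second coordinate while its second marginal, being $f_2^r$-invariant and absolutely continuous with respect to $\mu_2$, must be non-atomic. Hence $d_1=d_2$. The transpose case is symmetric.

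For a general invariant curve $C$ (this covers (A2)), I will argue with bidegrees. Let $G:=F^n=f_1^n\times f_2^n$, and let $e_j:=\deg(\pi_j|_C)\geq1$, so that $[C]$ has intersection numbers $e_1$ with vertical fibres and $e_2$ with horizontal fibres. First, $G$ has topological degree $(d_1d_2)^n$ and $G^*[\text{vertical}]=d_1^n[\text{vertical}]$. Second, since $G(C)=C$, the restriction $G|_C\colon C\to C$ is a finite map of some degree $k$. Compute the degree of $\pi_1\circ G|_C=f_1^n\circ\pi_1|_C$ in two ways. One way gives $k\,e_1$; the other gives $d_1^n e_1$. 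Hence $k=d_1^n$. The same computation with $\pi_2$ gives $k=d_2^n$. Therefore $d_1^n=d_2^n$, and so $d_1=d_2$. This is the same argument as the graph case, applied on the normalization of $C$, where all maps are finite maps between curves and degrees are multiplicative.

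The only delicate point is that $G|_C$ is a well-defined finite map of curves whose degree is multiplicative under composition. I will handle this by lifting to the normalization $\tilde C$. The map $G|_C$ lifts to a surjective holomorphic self-map of $\tilde C$, and $\pi_j\circ G=f_j^n\circ\pi_j$ there. Both sides are non-constant maps of compact Riemann surfaces, so their degrees multiply. With the claim in hand, the corollary follows immediately from Theorem \ref{thm_main}, since $\nu\neq\mu_1\times\mu_2$ is assumed.
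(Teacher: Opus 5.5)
Your proposal is correct and follows essentially the paper's proof: a degree count in $f_2^r\circ A_i=A_i\circ f_1^r$ rules out (A1), and lifting $F^n|_C$ to the normalization and comparing degrees in $p_i\circ g=f_i^n\circ p_i$ gives $(\deg f_1)^n=(\deg f_2)^n$, which rules out (A2). Your check that $A_i$ is non-constant, so that the degree count is not vacuous, makes explicit a point the paper leaves implicit; the bidegree and topological-degree remarks about $G$ are not needed.
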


\begin{proof}
Alternative \textup{(A1)} forces
$\deg f_1=\deg f_2$ by taking degrees in
$f_2^r\circ A=A\circ f_1^r$, or in the corresponding transpose-graph
relation. 
For \textup{(A2)}, let $C$ be an invariant curve as in that
alternative, let
$\sigma\colon \widetilde C\to C$ be its normalization, and 
for $i=1,2$, set $p_i:=\pi_i\circ\sigma$.
Since $(f_1^n\times f_2^n)(C)=C$, the restriction of
$f_1^n\times f_2^n$ to $C$ lifts to a self-map
$g\colon \widetilde C\to\widetilde C$ satisfying
$p_1\circ g=f_1^n\circ p_1$ and 
$p_2\circ g=f_2^n\circ p_2$.
Taking degrees gives
$\deg g=(\deg f_1)^n=(\deg f_2)^n$.
Thus, if the degrees are different, the algebraic case
\textup{(A)} cannot occur, and the joining belongs to case
\textup{(B)}.
\end{proof}

The conclusion of Corollary \ref{cor:different-degrees}
is not vacuous; see
Remark \ref{rmk:different-degrees-example}.

\section{Measurable examples}
\label{sec:measurable-examples}
In this section, we give two constructions 
of ergodic joinings in case \textup{(B)} of Theorem \ref{thm_main} with very different
conditional measures. The first has diffuse conditional measures,
while the second has finite atomic conditional measures. 

Denote by
$\Sigma_d:=\{0,\ldots,d-1\}^{\mathbb N}$
the one-sided full shift with the uniform Bernoulli measure
$\beta_d$ and left shift $\sigma$. By the Bernoulli theorem \cite{HeicklenHoffman, ManeBernoulli}, 
for every rational map $f$ of
degree $d$, the system $(J(f),f,\mu_f)$ is measurably isomorphic to
$(\Sigma_d,\sigma,\beta_d)$.

\subsection{Diffuse Bernoulli couplings}
Fix $0<\rho<1$ and give each pair of symbols $(i,j)$ the probability
$$
p_\rho(i,j)
=
\rho d^{-1}\mathbf 1_{\{i=j\}}
+
(1-\rho)d^{-2}.
$$
Taking these pairs independently over time gives an ergodic
non-product self-joining $\lambda_\rho$ of the uniform $d$-shift
with uniform marginals. Indeed, summing $p_\rho(i,j)$ over either coordinate gives the
uniform distribution on the alphabet, so both coordinate marginals
of $\lambda_\rho$ are
equal to $\beta_d$.

The conditional measures of $\lambda_\rho$ over either coordinate
are non-atomic. Indeed, after conditioning on
$x=(x_n)\in\Sigma_d$, the coordinates of the second sequence remain
independent, and every one-symbol conditional probability is at most
$
\rho+\frac{1-\rho}{d}<1.
$
Hence every individual sequence has conditional measure zero.
Consequently, if $E\subset\Sigma_d$ is measurable and
$B\colon E\to\Sigma_d$ is any measurable map, then
$$
\lambda_\rho(\Gamma_B)
=
\int_E
(\lambda_\rho)_x(\{B(x)\})\,d\beta_d(x)
=
0.
$$

Let
$\theta\colon(\Sigma_d,\sigma,\beta_d)\to(J(f),f,\mu_f)$
be a Bernoulli isomorphism and set
$
\nu_\rho:=
(\theta\times\theta)_*\lambda_\rho.
$
Then $\nu_\rho$ is an ergodic non-product self-joining of $f$. We note that since $\theta_*\beta_d=\mu_f$, the two marginals of $\nu_\rho$ are
both $\mu_f$.
Moreover, the conditional measures of $\nu_\rho$ over either
coordinate are, almost everywhere, the pushforwards under $\theta$
of the corresponding conditional measures of $\lambda_\rho$, and
are therefore non-atomic. Hence, if $E\subset J(f)$ is measurable
and $B\colon E\to J(f)$ is any measurable map, then
$
\nu_\rho(\Gamma_B)
=
0.
$
In particular, $\nu_\rho$ charges no graph of a local
biholomorphism and therefore belongs to case \textup{(B)} of
Theorem \ref{thm_main}.

\begin{rmk}\label{rmk:different-degrees-example}
A modification of the above example also shows that Corollary
\ref{cor:different-degrees} is not vacuous. Indeed, suppose that
$d_1\neq d_2$ have a common divisor $c>1$. Identify the uniform
$d_i$-symbol shift with the product of the uniform $c$-symbol shift
and the uniform $(d_i/c)$-symbol shift. Coupling the two common
$c$-symbol coordinates diagonally and taking the two remaining
coordinates independently gives a non-product ergodic joining of
the uniform $d_1$- and $d_2$-symbol shifts. Transporting this joining
through Bernoulli isomorphisms gives a non-product ergodic joining
of any pair of rational maps of degrees $d_1$ and $d_2$, which
belongs to case \textup{(B)} by Corollary
\ref{cor:different-degrees}.
\end{rmk}

\subsection{A finite-to-one measurable joining}
We next show that case \textup{(B)} can also occur for joinings with
finite atomic conditional measures.

By \cite[Theorem 1.2]{PakovichPeriodicCurves}, for every $d\geq2$
there exists a non-empty Zariski open subset
$\mathcal U\subset\operatorname{Rat}_d$ such that, for
$f_1,f_2\in\mathcal U$, the product map $f_1\times f_2$ admits an
irreducible periodic curve which is neither vertical nor horizontal
if and only if $f_1$ and $f_2$ are M\"obius-conjugate.
Choose non-conjugate maps $f_1,f_2\in\mathcal U$.
Let
$
\theta_i\colon
(\Sigma_d,\sigma,\beta_d)
\to
(J(f_i),f_i,\mu_i),
 i=1,2,
$
be Bernoulli isomorphisms. Identify the alphabet with
$\mathbb Z/d\mathbb Z$ and define
$T\colon\Sigma_d\to\Sigma_d$ by
$$
T(x)_n=x_n+x_{n+1}\pmod d.
$$
Then $T\circ\sigma=\sigma\circ T$. Moreover,
$T_*\beta_d=\beta_d$. Indeed, for every block
$(y_0,\ldots,y_{N-1})$, choosing $x_0$ determines recursively a
unique block $(x_0,\ldots,x_N)$ satisfying
$x_n+x_{n+1}=y_n$. Hence there are exactly $d$ preimage blocks, and
the corresponding cylinder has probability $d^{-N}$.

Set
$
\lambda_T:=
(\operatorname{id},T)_*\beta_d
$
and
$
\nu_T:=
(\theta_1\times\theta_2)_*\lambda_T$.
Since
$(\pi_1)_*\lambda_T=\beta_d$ and
$(\pi_2)_*\lambda_T=T_*\beta_d=\beta_d$, the two marginals of
$\nu_T$ are $\mu_1$ and $\mu_2$. Hence $\nu_T$ is a joining of
$f_1$ and $f_2$. It is ergodic as $\lambda_T$ is the image of
the ergodic system $(\Sigma_d,\sigma,\beta_d)$ under the equivariant
map $x\mapsto(x,T(x))$. It is non-product since its conditional measure over the first
coordinate is a Dirac mass, whereas the corresponding conditional
measure for $\mu_1\times\mu_2$ is the non-atomic measure $\mu_2$.

Moreover, $\nu_T$ has finite atomic conditional measures over the
second coordinate. Indeed, $\lambda_T$ is supported on the graph of
$T$, and every fiber of $T$ consists of exactly $d$ points. Thus the
conditional measures of $\lambda_T$ over the second coordinate, and
hence those of $\nu_T$, are supported on at most $d$ points almost
everywhere.

We claim that $\nu_T$ belongs to case \textup{(B)} of
Theorem \ref{thm_main}. Suppose not. Then $\nu_T$ belongs to
case \textup{(A)}, and Theorem \ref{thm:Cnu-finite} gives an integer
$N\geq1$ and an irreducible algebraic curve
$C\subset\mathbb P^1\times\mathbb P^1$ such that
$(f_1^N\times f_2^N)(C)=C$
and $\nu_T(C)>0$.
Since the marginals of $\nu_T$ are the non-atomic measures
$\mu_1$ and $\mu_2$, the curve $C$ cannot be vertical or horizontal.
Thus $C$ is a non-vertical, non-horizontal periodic curve for
$f_1\times f_2$. This contradicts
\cite[Theorem 1.2]{PakovichPeriodicCurves}, since $f_1$ and $f_2$
were chosen to be non-conjugate. Hence $\nu_T$ belongs to case
\textup{(B)}.

\section{Recurrence and cluster structure modulo gauge}
\label{sec:cluster-gauge}
The proof of Theorem \ref{thm_main} uses only the total cluster
family $\mathcal C_\nu$. The fiberwise cluster sets $C(z)$,
however, contain a finer structure coming from recurrence. The
purpose of this section is to show that recurrent returns preserve
this cluster structure up to natural changes of coordinates.

\begin{thm}\label{thm:cluster-gauge-equivariance}
For every $z\in E^\sharp_{\mathrm{rec}}$, the fiberwise cluster sets
$C(z)$ and $C(Tz)$ are gauge equivalent. Equivalently, we have
$
[C(Tz)]_{\rm gau}
=
[C(z)]_{\rm gau}.
$
\end{thm}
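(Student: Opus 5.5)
The plan is to compare the transfer maps of $z$ and of $Tz$ through the cocycle property of inverse branches. Then pass to limits along common subsequences, and check that the discrepancy is exactly a left and a right coordinate change of the type allowed in the definition of $[\,\cdot\,]_{\rm gau}$. I am assuming that this definition quotients by composition on both sides with holomorphic germs from compact families of normalized univalent maps and reference transition maps.

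\textbf{Step 1: algebraic identity.} Write $m:=\tau(z)$, $z=(\hat x,\hat y)$ and $Tz=(\hat x',\hat y')$, so that $x'_0=x_{-m}$, $y'_0=y_{-m}$, and the backward histories are shifted. Uniqueness of inverse branches gives $g_{1,\hat x,n+m}=g_{1,\hat x',n}\circ g_{1,\hat x,m}$. Since the charts at $x_{-m}$ agree on both sides, this yields
$$
G_{1,\hat x,n+m}=G_{1,\hat x',n}\circ G_{1,\hat x,m},
$$
and similarly $G_{2,\hat y,n+m}=G_{2,\hat y',n}\circ G_{2,\hat y,m}$. Hence $a_{n+m}(z)=a_n(Tz)\,a_m(z)$. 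Substituting into the definition of $\Phi_{z,n+m}$ gives
$$
\Phi_{z,n+m}=G_{2,\hat y,m}^{-1}\circ G_{2,\hat y',n}^{-1}\circ L_{a_n(Tz)}\circ L_{a_m(z)}\circ G_{1,\hat x',n}\circ G_{1,\hat x,m},
\qquad L_c(w):=cw .
$$
The only obstruction to reading off $\Phi_{Tz,n}$ in the middle is the scalar $L_{a_m(z)}$, which does not commute with $G_{1,\hat x',n}$. I would move it past by writing
$$
L_{a_m(z)}\circ G_{1,\hat x',n}=G_{1,\hat x',n}\circ\bigl(G_{1,\hat x',n}^{-1}\circ L_{a_m(z)}\circ G_{1,\hat x',n}\bigr).
$$
Here $m$ is fixed, so $a_m(z)$ is a fixed nonzero constant.

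\textbf{Step 2: control of the inner conjugate.} Set $\widehat G_n:=G_{1,\hat x',n}/G'_{1,\hat x',n}(0)$. Then $G_{1,\hat x',n}^{-1}\circ L_{a}\circ G_{1,\hat x',n}=\widehat G_n^{-1}\circ L_a\circ\widehat G_n$. Since $Tz\in E^\sharp$, the maps $\widehat G_n$ are normalized univalent maps on $D(0,\rho_*)$, so by Koebe they form a compact family. After shrinking the domain so that $L_a\circ\widehat G_n$ lands in the Koebe quarter disc of $\widehat G_n$, the conjugates form a normal family whose limits are univalent. This is the right gauge factor.

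The outer factors $G_{2,\hat y,m}^{-1}$ and $G_{1,\hat x,m}$ are fixed univalent maps independent of $n$. Together with the reference conjugations $k_{\hat y},h_{\hat x},k_{\hat y'},h_{\hat x'}$ from Proposition~\ref{prop:reference-charts}, which lie in the compact families $\mathcal G,\mathcal H$, they supply the remaining left and right gauge maps. I expect this step to be the main obstacle. The difficulty is not the algebra but the domains: all compositions must be defined on a fixed disc, and domain losses must be absorbed into restriction of germs. For this I would use the Koebe quarter and growth estimates exactly as in the proof of Proposition~\ref{prop:transfer-maps}, shrinking $r^\sharp$ if needed. Identity-theorem continuation then lets the relation be read on all of $D(0,r^\sharp)$.

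\textbf{Step 3: passage to limits.} For the inclusion $C(z)\subset$ gauge of $C(Tz)$, take $\widetilde\Psi\in C(z)$ with $\widetilde\Phi_{z,n_j}\to\widetilde\Psi$. Apply Step 1 with $n=n_j-m$, and extract a further subsequence along which $\widetilde\Phi_{Tz,n_j-m}$ converges to some $\widetilde\Psi'\in C(Tz)$ and the inner conjugates of Step 2 converge. The limiting identity expresses $\widetilde\Psi$ as $\widetilde\Psi'$ composed on the left and right with admissible gauge germs.

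For the reverse inclusion, start from a subsequence for $Tz$. Use the same identity with $n_j+m$ in place of $n_j$, and invert the fixed gauge factors; this is legitimate because they are univalent and the gauge class is closed under inverses. The two inclusions together give $[C(Tz)]_{\rm gau}=[C(z)]_{\rm gau}$.
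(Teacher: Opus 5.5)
Your argument is the paper's argument up to a mirror-image choice. It uses the same cocycle identity for inverse branches, the same factorization $a_{n+\tau}(z)=a_n(Tz)\,a_\tau(z)$ (your $m$ is $\tau=\tau(z)$), and the same Koebe normality for a fixed scalar conjugated by an $n$-dependent branch. The two-way subsequence extraction is also the same. The paper packages the first three of these in Lemma~\ref{lem:return-equivar}.

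The one point to adjust is the side through which you conjugate the scalar $\lambda_z:=a_\tau(z)$. You push it through the source branch $G_{1,\hat x',n}$. Your $n$-dependent factor therefore sits on the right, and your left factor $k_{\hat y}\circ G_{2,\hat y,\tau}^{-1}\circ k_{\hat y'}^{-1}$ is fixed. The paper pushes it through the target branch:
\[
\Phi_{z,n+\tau}=G_{2,\hat y,\tau}^{-1}\circ\bigl(G_{2,\hat y',n}^{-1}\circ(\lambda_z\cdot\operatorname{id})\circ G_{2,\hat y',n}\bigr)\circ\Phi_{Tz,n}\circ G_{1,\hat x,\tau}.
\]
Then the right factor $B_z=h_{\hat x'}^{-1}\circ G_{1,\hat x,\tau}\circ h_{\hat x}$ is independent of $n$, and all $n$-dependence and all limits sit in the left factor $A_{z,n}$.

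This is not cosmetic, because Definition~\ref{def:gauge-pgroup} is built from exactly these germs. $\mathcal G_R$ is generated by the fixed germs $B_z$ alone, and $\mathcal G_L$ by the $A_{z,n}$ and their compact-open limits. Your right factor is a limit of $h_{\hat x'}^{-1}\circ G_{1,\hat x',n}^{-1}\circ(\lambda_z\cdot\operatorname{id})\circ G_{1,\hat x',n}\circ h_{\hat x'}$ composed with $B_z$. Neither it nor your bare fixed left factor is among these generators. So your limiting identity certifies gauge equivalence under the broader notion you assumed, but not directly in the paper's sense.

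The fix is one line. Since scalars commute,
\[
\Phi_{Tz,n}\circ\bigl(G_{1,\hat x',n}^{-1}\circ(\lambda_z\cdot\operatorname{id})\circ G_{1,\hat x',n}\bigr)=\bigl(G_{2,\hat y',n}^{-1}\circ(\lambda_z\cdot\operatorname{id})\circ G_{2,\hat y',n}\bigr)\circ\Phi_{Tz,n}.
\]
Your Step~2 estimates then apply verbatim to $G_{2,\hat y',n}$, which is univalent on $D(0,\rho_*)$ because $Tz\in E^\sharp$.

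Finally, you do not need identity-theorem continuation to $D(0,r^\sharp)$, nor any shrinking of $r^\sharp$. Elements of $C(z)$ and $C(Tz)$ are compared only as germs at $q_z$ and $q_{Tz}$, and $B_z(q_z)=q_{Tz}$.
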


In Section \ref{subsec:return-gauge}, we compare the normalized
transfer maps based at $z$ and at its recurrent return $Tz$ and
identify the resulting left and right coordinate changes.
In Section \ref{subsec:gauge-equivariance}, we use these changes to
define the
gauge equivalence
and prove Theorem
\ref{thm:cluster-gauge-equivariance}.

\subsection{Return maps and coordinate changes}
\label{subsec:return-gauge}
Recall that
$E^\sharp_{\mathrm{rec}}\subset E^\sharp$ is the recurrent part for
$\hat F^{-1}$, that
$\tau\colon E^\sharp_{\mathrm{rec}}\to\mathbb N$ is the first return
time to $E^\sharp$, and that
$Tz=\hat F^{-\tau(z)}z$ is the induced return map.
The proof of Theorem \ref{thm:cluster-gauge-equivariance} relies on
the following comparison
between the normalized transfer maps based at $z$ and at $Tz$.

\begin{lem}\label{lem:return-equivar}
Fix $z=(\hat x,\hat y)\in E^\sharp_{\mathrm{rec}}$, write
$Tz=(\hat x^+,\hat y^+)$, and set $\tau:=\tau(z)$. Then, after
restricting to sufficiently small neighbourhoods depending on $z$,
there exist a local biholomorphic germ $B_z$ and local
biholomorphic germs $A_{z,n}$ such that, for all sufficiently large
$n$, we have
\begin{equation}\label{eq:tilde-gauge}
\widetilde\Phi_{z,n+\tau}
=
A_{z,n}\circ
\widetilde\Phi_{Tz,n}\circ B_z.
\end{equation}
Moreover, every sequence $n_j\to\infty$ admits a subsequence along
which $A_{z,n_j}$ converges locally uniformly to a locally
biholomorphic germ.
\end{lem}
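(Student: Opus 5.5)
The plan is to compare the inverse branches along $z$ and along $Tz=\hat F^{-\tau}z$ using the cocycle (chain rule) structure of inverse branches along a backward orbit. Write $Tz=(\hat x^+,\hat y^+)$, so that $x^+_0=x_{-\tau}$, $x^+_{-n}=x_{-n-\tau}$, and similarly for $\hat y^+$. Since $g_{1,\hat x,n+\tau}=g_{1,\hat x^+,n}\circ g_{1,\hat x,\tau}$ near $x_0$, the definition \eqref{eq:G} gives, on a small disc around $0$,
$$
G_{1,\hat x,n+\tau}=G_{1,\hat x^+,n}\circ G_{1,\hat x,\tau},
\qquad
G_{2,\hat y,n+\tau}=G_{2,\hat y^+,n}\circ G_{2,\hat y,\tau}.
$$
Both $G_{1,\hat x,\tau}$ and $G_{2,\hat y,\tau}$ are fixed univalent germs (independent of $n$) fixing $0$. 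Hence $a_{n+\tau}(z)=a_n(Tz)\,b_\tau$ with $b_\tau:=G'_{2,\hat y,\tau}(0)/G'_{1,\hat x,\tau}(0)$.

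Next I substitute these into the formula for $\Phi_{z,n+\tau}$:
$$
\Phi_{z,n+\tau}=G_{2,\hat y,\tau}^{-1}\circ G_{2,\hat y^+,n}^{-1}\circ\bigl(a_n(Tz)b_\tau\cdot\operatorname{id}\bigr)\circ G_{1,\hat x^+,n}\circ G_{1,\hat x,\tau}.
$$
The issue is that the scalar $b_\tau$ sits in the middle. To move it outward, I write $a_n(Tz)b_\tau\, G_{1,\hat x^+,n}=a_n(Tz)\,G_{1,\hat x^+,n}\circ L_{z,n}$, where $L_{z,n}:=G_{1,\hat x^+,n}^{-1}\circ(b_\tau\cdot\operatorname{id})\circ G_{1,\hat x^+,n}$. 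This is awkward, since $L_{z,n}$ depends on $n$ on the right. Instead, I push the scalar to the left side, where the freedom $A_{z,n}$ is allowed:
$$
A^0_{z,n}:=G_{2,\hat y,\tau}^{-1}\circ G_{2,\hat y^+,n}^{-1}\circ(b_\tau\cdot\operatorname{id})\circ G_{2,\hat y^+,n}.
$$
Then $\Phi_{z,n+\tau}=A^0_{z,n}\circ\Phi_{Tz,n}\circ G_{1,\hat x,\tau}$. The key point is that the conjugate $M_n:=G_{2,\hat y^+,n}^{-1}\circ(b_\tau\cdot\operatorname{id})\circ G_{2,\hat y^+,n}$ is a \emph{rescaling} of a univalent map. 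Set $\widehat G_n(w):=G_{2,\hat y^+,n}(w)/G'_{2,\hat y^+,n}(0)$. These are normalized univalent maps on $D(0,\rho_*)$ (by Lemma \ref{prop:good-pairs-good-times}, since $Tz\in E^\sharp$), so they form a normal family with non-degenerate limits by Koebe. Since $M_n=\widehat G_n^{-1}\circ(b_\tau\cdot\operatorname{id})\circ\widehat G_n$, on a disc of radius depending on $|b_\tau|$ and $\rho_*$ these are defined, univalent, and subconvergent to univalent germs $\widehat G^{-1}\circ b_\tau\circ\widehat G$. Koebe's quarter theorem and the distortion estimates of Proposition \ref{prop:inverse branch-package}(3) ensure the common domain is uniform in $n$.

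Finally, I conjugate to the reference charts. With $\widetilde\Phi_{z,n+\tau}=k_{\hat y}\circ\Phi_{z,n+\tau}\circ h_{\hat x}$ and $\widetilde\Phi_{Tz,n}=k_{\hat y^+}\circ\Phi_{Tz,n}\circ h_{\hat x^+}$, I set
$$
B_z:=h_{\hat x^+}^{-1}\circ G_{1,\hat x,\tau}\circ h_{\hat x},
\qquad
A_{z,n}:=k_{\hat y}\circ A^0_{z,n}\circ k_{\hat y^+}^{-1}.
$$
Both are local biholomorphic germs near $\chi_a(x_0)$ and $\chi_b(y^+_0)$ respectively, and \eqref{eq:tilde-gauge} follows. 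Convergence along subsequences of $A_{z,n_j}$ follows from that of $M_{n_j}$, because $k_{\hat y}$, $k_{\hat y^+}$ and $G_{2,\hat y,\tau}$ are fixed. The limit is locally biholomorphic, since the derivative at the base point is the constant $b_\tau$ times fixed nonzero chart factors.

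I expect the main obstacle to be the domain bookkeeping. One must guarantee that all compositions are defined on neighbourhoods independent of $n$, in particular that $\Phi_{Tz,n}(B_z(\cdot))$ lands where $A_{z,n}$ is defined. I would handle this by shrinking to a small disc around $q_z$ chosen via Koebe growth bounds uniform in $n$, using that $\Phi_{Tz,n}$ fixes $0$ with derivative $1$ and maps into $D(0,R)$.
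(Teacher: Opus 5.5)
Your proposal is correct and follows the paper's proof essentially step for step. It uses the same cocycle identities, the same relation $a_{n+\tau}(z)=a_n(Tz)\lambda_z$, the same left factor $G_{2,\hat y,\tau}^{-1}\circ G_{2,\hat y^+,n}^{-1}\circ(\lambda_z\cdot\operatorname{id})\circ G_{2,\hat y^+,n}$ (your $A^0_{z,n}$, the paper's $L_{z,n}$), and the same $A_{z,n}$ and $B_z$. The only cosmetic difference is how you get the $n$-uniform domain and normality: you rescale $G_{2,\hat y^+,n}$ to normalized univalent maps, which also identifies the limits as conjugates of $\lambda_z\cdot\operatorname{id}$, whereas the paper uses Koebe inner/outer bounds directly to get $H_n\colon D(0,s)\to D(0,t)$ with $H_n'(0)=\lambda_z$.
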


\begin{proof}
Since $Tz=\hat F^{-\tau}z$, we have
$\hat x^+=\hat f_1^{-\tau}\hat x$ and
$\hat y^+=\hat f_2^{-\tau}\hat y$. The inverse branches satisfy
$$
G_{1,\hat x,n+\tau}
=
G_{1,\hat x^+,n}\circ G_{1,\hat x,\tau}
\qquad
\text{and}
\qquad
G_{2,\hat y,n+\tau}
=
G_{2,\hat y^+,n}\circ G_{2,\hat y,\tau}.
$$
Recall that
$a_n(z)=G'_{2,\hat y,n}(0)/G'_{1,\hat x,n}(0)$. Differentiating the
preceding identities gives
$$
a_{n+\tau}(z)
=
a_n(Tz)\lambda_z
\qquad
\text{and}
\qquad
\lambda_z
:=
\frac{G'_{2,\hat y,\tau}(0)}
     {G'_{1,\hat x,\tau}(0)}
\in\mathbb C^*.
$$
Using the definitions of the normalized transfer maps and the cocycle
relations, we obtain
$$
\Phi_{z,n+\tau}
=
L_{z,n}\circ
\Phi_{Tz,n}\circ
G_{1,\hat x,\tau},
\quad \text{where}
\quad
L_{z,n}
:=
G_{2,\hat y,\tau}^{-1}
\circ
G_{2,\hat y^+,n}^{-1}
\circ
(\lambda_z\cdot\operatorname{id})
\circ
G_{2,\hat y^+,n}.
$$
All these identities are understood on sufficiently small
neighbourhoods on which the compositions are defined.
Passing to the fixed reference charts gives
\eqref{eq:tilde-gauge}, with
$$
A_{z,n}
:=
k_{\hat y}\circ L_{z,n}\circ k_{\hat y^+}^{-1}
\qquad
\text{and}
\qquad
B_z
:=
h_{\hat x^+}^{-1}\circ
G_{1,\hat x,\tau}\circ h_{\hat x}.
$$
The germ $B_z$ is a local biholomorphism depending only on the finite
return segment.
For fixed $z$, the only $n$-dependent part of $A_{z,n}$ is 
$$
H_n
:=
G_{2,\hat y^+,n}^{-1}
\circ
(\lambda_z\cdot\operatorname{id})
\circ
G_{2,\hat y^+,n}.
$$
By Proposition \ref{prop:inverse branch-package}, the maps
$G_{2,\hat y^+,n}$ are univalent on a common disc and satisfy uniform
Koebe estimates there. Choose $t>0$ sufficiently small so that
$G_{2,\hat y,\tau}^{-1}$ is defined on $D(0,t)$. Since $\lambda_z$
is fixed, we may then choose $0<s<t$, depending on $z$ but not on
$n$, such that
$$
\lambda_z G_{2,\hat y^+,n}(D(0,s))
\subset
G_{2,\hat y^+,n}(D(0,t))
$$
for every $n$. Indeed, the Koebe distortion and growth estimates give
uniform inner and outer bounds for these images in terms of
$|G'_{2,\hat y^+,n}(0)|$.
It follows that $H_n$ is defined on $D(0,s)$ for every $n$ and
satisfies
$H_n(D(0,s))\subset D(0,t)$.
Hence $\{H_n\}$ is a normal family on $D(0,s)$.
Moreover,
$H_n'(0)=\lambda_z$
for every $n$.
Consequently, every locally uniform cluster limit of the $H_n$ has
non-zero derivative at the origin and is therefore locally
biholomorphic after restriction.

Since $A_{z,n}$ is obtained from $H_n$ by pre- and
post-composition with fixed local biholomorphisms,
after restricting
to a sufficiently small fixed neighbourhood of the base point, the
normality of $\{H_n\}$ gives normality of
$\{A_{z,n}\}$. Every cluster limit is locally biholomorphic as
the same holds for the corresponding cluster limit of $\{H_n\}$.
\end{proof}

\subsection{Gauge equivariance of fiberwise cluster sets}
\label{subsec:gauge-equivariance}
We now formalize the coordinate ambiguity appearing in Lemma
\ref{lem:return-equivar}. Recall from Proposition
\ref{prop:reference-charts} (4) that, for
$z=(\hat x,\hat y)\in E^\sharp_{\mathrm{rec}}$, we set
$q_z:=\chi_a(x_0)$. When an element of $C(z)$ is regarded as a germ,
we mean its germ at $q_z$.
We denote by $\mathcal G_R$
the local pseudo-group generated by the right
return germs $B_z$,
$z\in E^\sharp_{\mathrm{rec}}$,
together with their restrictions, local inverses, and compositions
whenever defined. Similarly, 
we denote by $\mathcal G_L$
the local pseudo-group generated by the germs
$A_{z,n}$, for $z\in E^\sharp_{\mathrm{rec}}$ and all sufficiently
large $n$, and by all compact-open limits of sequences
$A_{z,n_j}$ with $z$ fixed and $n_j\to\infty$, together with their
restrictions, local inverses, and compositions whenever defined.

\begin{defn}\label{def:gauge-pgroup}
Two local holomorphic germs $\Psi_1$ and $\Psi_2$ in the fixed
reference coordinates are \emph{gauge equivalent} if, after
restriction to sufficiently small neighbourhoods, we have
$\Psi_2=A\circ\Psi_1\circ B$
for some $A\in\mathcal G_L$ and $B\in\mathcal G_R$.
Two families $K_1,K_2$ of such germs are
\emph{gauge equivalent} if every element of $K_1$ is gauge equivalent
to an element of $K_2$, and conversely. We write
$[K]_{\rm gau}$ for the resulting equivalence class.
\end{defn}

We can now prove Theorem \ref{thm:cluster-gauge-equivariance}.

\begin{proof}[Proof of Theorem \ref{thm:cluster-gauge-equivariance}]
Fix $\widetilde\Psi\in C(Tz)$.
Choose $n_j\to\infty$ such that
$\widetilde\Phi_{Tz,n_j}\to\widetilde\Psi$ locally uniformly. By
Lemma \ref{lem:return-equivar}, after passing to a subsequence we may
assume that
$A_{z,n_j}\to A$ locally uniformly for a locally biholomorphic germ
$A$. Moreover, we have
$$
\widetilde\Phi_{z,n_j+\tau(z)}
=
A_{z,n_j}\circ
\widetilde\Phi_{Tz,n_j}\circ B_z
$$
as germs on a fixed neighbourhood.

By normality of the family
$\{\widetilde\Phi_{z,n}\}_{n\geq1}$, after passing to a further
subsequence we may assume that
$
\widetilde\Phi_{z,n_j+\tau(z)}
\to\widetilde\Theta
$
locally uniformly on $D(0,r^\sharp)$ for some
$\widetilde\Theta\in C(z)$. Passing to the limit in the preceding
identity gives, as germs,
$\widetilde\Theta
=
A\circ\widetilde\Psi\circ B_z$.
Thus every element of $C(Tz)$ is gauge equivalent to an element of
$C(z)$.

Conversely, take $\widetilde\Theta\in C(z)$ and choose
$m_j\to\infty$ such that
$\widetilde\Phi_{z,m_j}\to\widetilde\Theta$ locally uniformly.
After discarding finitely many terms, write
$m_j=n_j+\tau(z)$ with $n_j\to\infty$.
By Lemma \ref{lem:return-equivar}, after passing to a subsequence we
may assume that $A_{z,n_j}\to A$ locally uniformly. Normality of the
transfer maps also allows us to assume that
$
\widetilde\Phi_{Tz,n_j}
\to
\widetilde\Psi
$
for some $\widetilde\Psi\in C(Tz)$. Passing to the limit in
$$
\widetilde\Phi_{z,m_j}
=
A_{z,n_j}\circ
\widetilde\Phi_{Tz,n_j}\circ B_z
$$
gives again, as germs,
$
\widetilde\Theta
=
A\circ\widetilde\Psi\circ B_z.
$
This completes the proof.
\end{proof}

\begin{rmk}\label{rmk:gauge-empty-Anu}
Theorem \ref{thm:cluster-gauge-equivariance} is especially useful
when $\mathcal A_\nu=\emptyset$. In that case no individual cluster
germ occurs on a set of positive lifted joining measure, but recurrent
returns still transport the fiberwise cluster sets into one another
modulo gauge.
\end{rmk}

\section{Classical exceptional maps are joining-exceptional}
\label{sec:classical-ex-maps}
In this section, we specialize to the diagonal case
$f_1=f_2=f$ and show that the classical exceptional rational maps
are joining-exceptional. Recall that {\it classical
exceptional maps} are
rational maps that are M\"obius-conjugate to a power
map $z\mapsto z^{\pm d}$, to a signed Chebyshev map $\pm T_d$,
where $d\geq2$ and $T_d$ is normalized by
$T_d(z+z^{-1})=z^d+z^{-d}$,
or to a Latt\`es map
(i.e.,
a finite quotient of an affine
endomorphism of an elliptic curve).

\begin{prop}\label{prop:def-dim-1}
Every classical exceptional rational map of degree at least $2$ is
joining-exceptional.
\end{prop}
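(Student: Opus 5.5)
We must exhibit, for each classical exceptional map $f$ of degree $d\ge2$, an integer $n\ge1$ and an irreducible curve $C\subset\mathbb P^1\times\mathbb P^1$ with $(f^n\times f^n)(C)=C$ whose two projections both have degree strictly greater than one. Joining-exceptionality is invariant under Möbius conjugation: if $f=M^{-1}gM$, then $(M\times M)(C)$ is invariant for $g\times g$ and has the same projection degrees. So we may work with normal forms. In all three families we take $C$ to be the image of a correspondence built from an affine or multiplicative map that commutes with the uniformizing dynamics but does not descend to the quotient. We split into three cases.

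\emph{Power maps.} Let $f(z)=z^{\pm d}$ and $n=2$, so that $f^2(z)=z^{d^2}$. Take the curve $C=\{(x,y):x^2=y^3\}$, parametrized by $t\mapsto(t^3,t^2)$. It is irreducible, and its projections have degrees $3$ and $2$. For a point $(t^3,t^2)\in C$ we get $f^2\times f^2(t^3,t^2)=\bigl((t^{d^2})^3,(t^{d^2})^2\bigr)\in C$. Thus $F^2(C)\subset C$, and since $F^2(C)$ is an irreducible curve, equality holds. In fact $n=1$ also works for $z^{d}$; for $z^{-d}$ we use the same computation with exponent $-d$.

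\emph{Chebyshev maps.} Let $f=\pm T_d$ and $n=2$. Then $f^2=\pm T_{d^2}$ or $T_{d^2}$; more precisely $(\pm T_d)^2$ is $\pm T_{d^2}$ with a sign we control, using the parity of $T_d$. Let $C$ be the image of $w\mapsto(T_2(w),T_3(w))$, which is irreducible with projection degrees $2$ and $3$. Since Chebyshev polynomials commute, $T_{d^2}\circ T_k=T_k\circ T_{d^2}$, so $C$ is invariant when the sign is $+$. For the sign issue we replace $n$ by $2$ and use $T_k(-w)=(-1)^kT_k(w)$. If a residual sign persists, we instead take $C$ given by $(T_2(w),T_4(w))$ or a similar even-index pair, adjusted to be irreducible. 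An alternative is to pass to $n$ even, where $f^n=T_{d^n}$ exactly: $(-T_d)\circ(-T_d)=T_{d^2}$ when $d$ is even, and the sign can be absorbed when $d$ is odd, since then $-T_d(-w)=T_d(w)$.

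\emph{Lattès maps.} Let $f$ be induced by $A(z)=az+b$ on $\mathbb C/\Lambda$ via $\Theta=\pi\circ p$, so that $|a|^2=d$. We choose an integer $m\ge2$ and consider the isogeny $[m]$. Since $A$ commutes with $z\mapsto mz$ up to translation by the torsion point $(m-1)b$, we pass to an iterate. We define $C$ as the image of $E\to\mathbb P^1\times\mathbb P^1$, $u\mapsto(\pi(u),\pi(mu))$. Its first projection has degree $\deg\pi\ge2$, and its second has degree $m^2\deg\pi>1$; the curve is irreducible as the image of $E$. For invariance we need $A^n\circ[m]$ and $[m]\circ A^n$ to agree modulo the deck group. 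Their difference is translation by a point of the form $(m-1)b_n$. Choosing $b$ to be a fixed point of $A$ (translate coordinates so that $A$ fixes $0$; the fixed point exists because $a\ne1$), this difference is $0$, because $\Gamma$-equivariance of the new coordinate still holds up to the finite group. Then $(f^n\times f^n)(C)=C$ for suitable $n$, chosen so that $a^n$ commutes with the rotation group $\mathcal U$ (it always does, since $\mathbb C$ is commutative).

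\textbf{Main obstacle.} The main obstacle is the bookkeeping of translations and signs in the Lattès and signed cases. The plan there is to normalize to a fixed point of the affine lift and to pass to an iterate $n$ that kills the finite ambiguity.
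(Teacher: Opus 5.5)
You handle power maps and $T_d$ itself the same way the paper does, with $(m,n)=(2,3)$. The Latt\`es case, however, has a genuine gap: the curve you build is a graph. In the presentation you fix, $\pi\colon E=\mathbb C/\Lambda\to\mathbb P^1$ is the quotient by the linear rotation group $\mathcal U$. The map $[m]$ commutes with every $\zeta\in\mathcal U$ and preserves $\Lambda$, so it descends to a rational map $R_m$ with $\pi\circ[m]=R_m\circ\pi$. Hence $\{(\pi(u),\pi(mu))\}$ is exactly the graph of $R_m$. The parametrization $E\to C$ has degree $\deg\pi$, and $\deg(\pi_1|_C)=\deg\pi/\deg(E\to C)=1$; you computed the degree of $u\mapsto\pi(u)$, not of $\pi_1|_C$. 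Moving the origin to a fixed point of $A$ does not rescue this in general. For $A(z)=2z$ and $\mathcal U=\{\pm1\}$, no translation occurs and $C$ is the graph of $R_m$ for every $m$. That is an (A1)-type relation, not a witness of joining-exceptionality. Your invariance bookkeeping is also unclear, since after that translation the rotations are no longer linear, so the commutation you invoke would have to be rechecked. The missing idea is to make \emph{both} coordinates multivalued. The paper translates so that $L$ fixes $0$ and is a group endomorphism, then takes $\widetilde C=\{(mt,nt):t\in E\}$ with coprime $m,n>1$. Then $(L\times L)(\widetilde C)=\widetilde C$ holds upstairs with no descent needed. It proves $\deg(\pi_1|_C)>1$ by a torsion argument: if this degree were $1$, then $\Theta(u+ns)=\Theta(u)$ for all $u\in E$ and $s\in E[m]$. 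Since $[n]$ permutes $E[m]$, this forces $E[m]\subset T_\Theta$, which is excluded by taking $m$ large because $T_\Theta$ is finite. The argument for $\pi_2$ is symmetric.

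Your signed Chebyshev case also fails as written, because the parity is reversed. In fact $(-T_d)^{\circ2}=(-1)^{d+1}T_{d^2}$, so the square is $T_{d^2}$ when $d$ is odd, whereas for $d$ even $(-T_d)^{\circ k}=-T_{d^k}$ for every $k$. So for $d$ even no iterate removes the sign. Every iterate of $(-T_d)\times(-T_d)$ maps your curve $y^2=(x+2)(x-1)^2$ (the image of $w\mapsto(T_2(w),T_3(w))$) to the different curve $y^2=(2-x)(x+1)^2$. Your fallback $(T_2(w),T_4(w))$ fails too: since $T_4=T_2\circ T_2$, it is the graph of $T_2$ and has a degree-one projection. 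The fix, as in the paper, is that for $d$ even $T_d$ is an even function, so $-T_d=M\circ T_d\circ M^{-1}$ with $M(w)=-w$. Your Möbius-invariance remark then finishes the case.
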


\begin{proof}
It suffices to prove the statement for the models $z^d$, $T_d$, and
the Latt\`es maps. Indeed, joining-exceptionality is invariant under
M\"obius conjugacy: if $g=M\circ f\circ M^{-1}$ and
$C\subset\mathbb P^1\times\mathbb P^1$ satisfies the requirements in the definition of joining-exceptionality for $f$, then
$(M\times M)(C)$ satisfies
the same requirements for
$g$. Moreover, the definition of joining-exceptionality only requires an
invariant curve for some iterate. Thus the signed models $z^{\pm d}$ and $\pm T_d$ reduce to $z^d$ and $T_d$, as we have
$
(z^{-d})^{\circ2}=z^{d^2},
$ and for the Chebyshev maps, $-T_d$ is M\"obius-conjugate to $T_d$ when
$d$ is even, while
$
(-T_d)^{\circ2}=T_{d^2}
$
when $d$ is odd. Hence it remains only to construct the required
invariant curves for $z^d$, $T_d$, and the Latt\`es maps.

\smallskip

First suppose that $f(z)=z^d$ with $d\geq2$. Let $m,n>1$ be
coprime integers, and let $C_{m,n}\subset\mathbb P^1\times\mathbb P^1$
be the image of the map $t\mapsto (t^n,t^m)$. In affine coordinates,
$C_{m,n}$ is given by the equation $y^n=x^m$. Since $\gcd(m,n)=1$,
this parametrization is birational onto its image. Hence $C_{m,n}$ is
irreducible. Both coordinate projections are dominant, with generic
degrees $n>1$ and $m>1$, respectively. Moreover,
$(f\times f)(C_{m,n})=C_{m,n}$. Indeed, if $y^n=x^m$, then
$(y^d)^n=(x^d)^m$, so $(f\times f)(C_{m,n})\subset C_{m,n}$. Since
$f\times f$ is finite, the image $(f\times f)(C_{m,n})$ is an
irreducible algebraic curve. Hence the above inclusion is an equality.
Therefore $f$ is joining-exceptional.

\smallskip

Next suppose that $f=T_d$ is a Chebyshev map. Denote
$\eta(z):=z+z^{-1}$ so that
$\eta\circ (z\mapsto z^d)=T_d\circ\eta$. Let $C_{m,n}$ be the curve
constructed above for the monomial $z\mapsto z^d$, and define
$$
D_{m,n}:=(\eta\times\eta)(C_{m,n})\subset\mathbb P^1\times\mathbb P^1.
$$
Since $\eta\times\eta$ is finite and $C_{m,n}$ is irreducible,
$D_{m,n}$ is irreducible. The curve $D_{m,n}$ is parametrized by
$t\mapsto(\eta(t^n),\eta(t^m))=(T_n(\eta(t)),T_m(\eta(t)))$. Hence both
projections are dominant. We only need to check that the two projections
have generic degree strictly greater than one. For the first projection,
choose a non-trivial $n$-th root of unity $\zeta$. Then $t$ and
$\zeta t$ have the same first coordinate, since
$\eta((\zeta t)^n)=\eta(t^n)$. On the other hand, since
$\gcd(m,n)=1$, we have $\zeta^m\neq1$, and for generic $t$ the two
second coordinates $\eta(t^m)$ and $\eta(\zeta^m t^m)$ are distinct.
Hence the first projection is not generically one-to-one, and so it has
generic degree strictly greater than one. The same argument, exchanging
$m$ and $n$, applies to the second projection. Moreover, since
the product map
$(z\mapsto z^d)\times(z\mapsto z^d)$ preserves $C_{m,n}$ and
$\eta\circ(z\mapsto z^d)=T_d\circ\eta$, we have
$(T_d\times T_d)(D_{m,n})=D_{m,n}$. Thus $T_d$ is joining-exceptional.

\smallskip

Finally suppose $f$ is a Latt\`es map. Then there exist an elliptic
curve $E$, a finite holomorphic map $\Theta\colon E\to\mathbb P^1$ and
an affine endomorphism $L\colon E\to E$ such that
$\Theta\circ L=f\circ\Theta$. Since $\deg f\geq2$, the affine map $L$
has a fixed point; indeed, writing $L(t)=\alpha(t)+c$, the endomorphism
$\operatorname{id}-\alpha$ is non-zero and hence surjective. After
translating the origin of $E$ to such a fixed point, and replacing
$\Theta$ by the corresponding translated covering map, we may assume
that $L$ fixes the origin of $E$, so that $L$ is a group endomorphism.
Set
$$
T_\Theta:=\{a\in E:\Theta(t+a)=\Theta(t)\text{ for every }t\in E\}.
$$
This is a finite subgroup of $E$, since it acts freely by translations
on every fiber of the finite map $\Theta$. 
For $k\geq 1$, let $[k]\colon E\to E$ denote multiplication by $k$,
and let $E[k]:=\ker [k]$ be the $k$-torsion subgroup of $E$.
Since $T_\Theta$ is finite,
we can choose coprime integers $m,n>1$ such that
$E[m]\not\subset T_\Theta$ and $E[n]\not\subset T_\Theta$. Set
$M_1:=[m]$ and $M_2:=[n]$. Since multiplication maps commute with $L$,
both $M_1$ and $M_2$ commute with $L$. Set
$$
\widetilde C:=\{(M_1(t),M_2(t)):t\in E\}\subset E\times E
\qquad
\text{and}
\qquad
C:=(\Theta\times\Theta)(\widetilde C)\subset\mathbb P^1\times\mathbb P^1.
$$
The curve $C$ is irreducible as it is the image of the irreducible
curve $E$ under the finite map $\Theta\times\Theta$. Both projections
are dominant. Moreover, neither projection is generically one-to-one.
Indeed, suppose that $\pi_1|_C$ has generic degree one. Then, for generic
$t\in E$ and every $s\in E[m]$, the points $t$ and $t+s$ give the same
first coordinate on $C$, hence also the same second coordinate. Thus
$\Theta(n(t+s))=\Theta(nt)$ for generic $t$, and therefore for every
$t\in E$. Since $[n]\colon E\to E$ is surjective, it follows that
$ns\in T_\Theta$ for every $s\in E[m]$. Since $\gcd(m,n)=1$,
multiplication by $n$ permutes $E[m]$, and hence $E[m]\subset T_\Theta$,
a contradiction. Therefore $\deg(\pi_1|_C)>1$. The same argument,
exchanging $m$ and $n$, gives $\deg(\pi_2|_C)>1$. Finally, since $M_1$
and $M_2$ commute with $L$ and $L$ is surjective, we have
$(L\times L)(\widetilde C)=\widetilde C$. Applying
$\Theta\times\Theta$ and using $\Theta\circ L=f\circ\Theta$, we obtain
$(f\times f)(C)=C$. Thus $f$ is joining-exceptional.
This completes the proof.
\end{proof}

\section{Further directions and questions}
\label{sec:further-questions}
The results of this paper suggest several directions in which joining
rigidity in rational dynamics may be developed further. We first
discuss the boundary between measurable and holomorphic rigidity and
the joining-exceptional alternative, and then turn to higher-order
joinings and higher-dimensional holomorphic dynamics.

\subsection{Measurable versus holomorphic rigidity}
\label{subsec:exceptional-questions}
Theorem \ref{thm_main} naturally raises the question of recognizing
when a purely measurable joining charges the graph of a local
biholomorphism. The examples in Section \ref{sec:measurable-examples}
show why this question is genuinely analytic rather than merely a
question about the existence of measurable dependence.

Another problem is to understand more precisely the
joining-exceptional alternative.
In the diagonal case $f_1=f_2=f$, one natural problem is to characterize
joining-exceptional maps more explicitly in dynamical terms. Pakovich's
classification of diagonal invariant curves, applied to a suitable
iterate of $f$, provides a natural starting point for this question;
see \cite{PakovichInvariantCurves} and Section \ref{sec:self-joinings}.

For a general pair $(f_1,f_2)$, the corresponding problem is to
characterize those pairs for which an iterate of $f_1\times f_2$
preserves an irreducible curve whose two coordinate projections have
degree greater than one. Pakovich's work gives a structural description
of invariant curves for broad non-exceptional classes of pairs
\cite{PakovichInvariantCurves}. It would be interesting to reformulate
this description as a more direct dynamical criterion for this property
and to understand the remaining cases.

Another
question is to understand the non-discrete local analytic
families arising in case \textup{(B)}. In particular, one may ask
which additional dynamical conditions force one member of such a
family to occur on positive joining measure, and hence force the
algebraic conclusion of Theorem \ref{thm:Cnu-finite}.

\subsection{Higher-order joinings}
\label{subsec:higher-order-joinings}
Another direction is to consider higher-order joinings; see, for
example, \cite{GlasnerHostRudolph,
JanvresseDeLaRue}. For instance, a three-fold self-joining $\nu$ on
$J(f)^3$ has three pairwise projections, each of which is an ordinary
self-joining, but these pairwise projections need not determine
$\nu$.

At the purely measurable level, higher-order phenomena can already occur
without being visible in the pairwise projections: the Bernoulli shift on two symbols with equal weights
admits an ergodic non-product three-fold joining all of
whose two-coordinate projections are products. Thus a useful
higher-order theory should distinguish between such purely measurable
joinings and joinings which produce analytic or algebraic relations.

A natural geometric question is whether ergodic three-fold
self-joinings carrying positive-mass local analytic relations can be
classified in terms of invariant algebraic subvarieties of
$(\mathbb P^1)^3$.
A natural graph-type model, after passing to an iterate $f^r$, is a
subvariety of the form
$\{(x,A(x),B(x)):x\in\mathbb P^1\}$, where $A$ and $B$ commute with
an iterate of $f$. More generally, for joinings of several possibly
different rational maps, one may expect compatible systems of local or
algebraic correspondences among the coordinates.

\subsection{Higher-dimensional holomorphic dynamics}
\label{subsec:higher-dimensional-questions}
Finally, it is natural to seek an analogue of Theorem \ref{thm_main}
for holomorphic endomorphisms of $\mathbb P^k$, $k\geq2$, and their
equilibrium measures.
As in dimension one, the corresponding measure-preserving systems
are Bernoulli; see, e.g., \cite[Section 1.6]{dinh2010dynamics} and the
references therein.
One may ask whether positive joining mass on a local biholomorphic
relation forces an invariant algebraic correspondence, and whether,
in the absence of such a relation, normalized transfer maps still
produce a compact non-discrete family of holomorphic cluster maps.

The first difficulty appears already in the construction of transfer
maps. In dimension one, if $G_{1,\hat x,n}$ and $G_{2,\hat y,n}$ are
inverse branches in the two coordinates, the normalization
$$
G_{2,\hat y,n}^{-1}\circ\left(
\frac{G'_{2,\hat y,n}(0)}{G'_{1,\hat x,n}(0)}\,\operatorname{id}
\right)\circ G_{1,\hat x,n}
$$
has derivative equal to $1$ at the base point. More importantly, Koebe
distortion converts this exact derivative normalization into uniform
control on fixed domains.

In higher dimension, one can formally normalize the derivatives in a
similar way and may consider a linear normalization involving
$DG_{2,\hat y,n}(0)DG_{1,\hat x,n}(0)^{-1}$. The difficulty is that such
a normalization does not by itself give uniform geometric control. The
derivative cocycles may have several Lyapunov directions with very
different contraction rates, and the normalizing linear maps may become
highly anisotropic. Thus the one-dimensional argument based on
univalence and Koebe distortion
 does not apply directly.

Higher-dimensional inverse-branch estimates, such as those of
Berteloot--Dupont--Molino \cite{BDM} and Berteloot--Dupont \cite{BD},
provide substitutes for Koebe distortion adapted to the
Lyapunov splitting, but involve tempered coordinate changes and
small exponential errors.
To adapt the transfer-map argument to joinings, one would need to
choose compatible normalizations for the inverse branches in the two
coordinates and prove compactness on fixed reference domains.

The positive-mass local-relation case would also require a higher-dimensional
local-to-global rigidity theorem. In dimension one, the generalized
Inou theorem \cite[Theorem 2.1]{JiXieLocalRigidityJuliaSets}
used in Section \ref{sec:finite-algebraization}
promotes the local intertwining relation
between inverse branches to an invariant algebraic curve. In higher
dimension, one would need conditions under which an analogous local
relation extends to an invariant algebraic correspondence in
$\mathbb P^k\times\mathbb P^k$. Thus a higher-dimensional joining
rigidity theorem would likely require both compactness for suitably normalized
transfer maps and a higher-dimensional local-to-global rigidity result.

\printbibliography
\end{document}